\definecolor{amaranth}{rgb}{0.9, 0.17, 0.31}
\newcommand{\todo}[1]{\marginpar{\tiny\color{amaranth}#1}\@latex@warning{#1}\xspace}
\colorlet{RobertColor}{MidnightBlue}
\colorlet{PhilippColor}{ForestGreen!50!OliveGreen}
\newcommand*{\mbb}[1]{\mathbb{#1}}
\newcommand*{\mcal}[1]{\mathcal{#1}}
\newcommand*{\mfrak}[1]{\mathfrak{#1}}
\newcommand*{\dd}{\ensuremath{\mathrm{d}}}
\newcommand*{\dx}[1][x]{\ensuremath{\,\dd{#1}}}
\newcommand{\PL}{\lambda}
\DeclareMathOperator{\supp}{supp}
\let\inf\relax  
\DeclareMathOperator*{\inf}{inf\vphantom{\sup}}
\DeclarePairedDelimiter{\pars}{\ensuremath{(}}{\ensuremath{)}}
\DeclarePairedDelimiter{\bracs}{\ensuremath{[}}{\ensuremath{]}}
\DeclarePairedDelimiter{\braces}{\ensuremath{\{}}{\ensuremath{\}}}
\DeclarePairedDelimiter{\floor}{\lfloor}{\rfloor}
\DeclarePairedDelimiter{\inner}{\langle}{\rangle}
\DeclarePairedDelimiter{\norm}{\|}{\|}
\DeclarePairedDelimiter{\abs}{\lvert}{\rvert}
\newcommand{\opnorm}{\@ifstar\@opnorms\@opnorm}
\newcommand{\@opnorms}[1]{%
  \left|\mkern-1.5mu\left|\mkern-1.5mu\left|
   #1
  \right|\mkern-1.5mu\right|\mkern-1.5mu\right|
}
\newcommand{\@opnorm}[2][]{%
  \mathopen{#1|\mkern-1.5mu#1|\mkern-1.5mu#1|}
  #2
  \mathclose{#1|\mkern-1.5mu#1|\mkern-1.5mu#1|}
}
\mathchardef\mhyphen="2D
\let\oldbullet\bullet
\newlength{\raisebulletlen}
\renewcommand\bullet{\raisebox{\raisebulletlen}{\,\tiny$\oldbullet$}\,}
\newlength{\dhatheight}
\newcommand{\doublehat}[1]{%
    \settoheight{\dhatheight}{\ensuremath{\hat{#1}}}%
    \addtolength{\dhatheight}{-0.25ex}%
    \hat{\vphantom{\rule{1pt}{\dhatheight}}%
    \smash{\hat{#1}}}}
\declaretheorem[numberwithin=section]{theorem}
\declaretheorem[sibling=theorem]{proposition}
\declaretheorem[sibling=theorem]{corollary}
\declaretheorem[sibling=theorem]{lemma}
\declaretheorem[sibling=theorem]{definition}
\declaretheorem[sibling=theorem]{remark}
    \definecolor{bleudefrance}{rgb}{0,0,0}
    \newcommand{\numRevisions}{1}
    \newcommand{\revision}[2][0]{%
    \begingroup%
        \newcount\colorRatio%
        \colorRatio=\numexpr(100*(#1+1))/\numRevisions\relax%
        \colorlet{revisionColor}{bleudefrance!\the\colorRatio!black}\color{revisionColor}#2%
    \endgroup}
    \newenvironment{revisione}[1][0]{\par%
        \begingroup%
        \newcount\colorRatio%
        \colorRatio=\numexpr(100* (#1+1))/\numRevisions\relax%
        \colorlet{revisionColor}{bleudefrance!\the\colorRatio!black}\color{revisionColor}
    }{\endgroup\par}
\newcommand{\ownTitle}{Optimal sampling for stochastic and natural gradient descent}
\newcommand{\ownShortTitle}{Optimal sampling for stochastic gradient descent}
\newcommand{\ownKeywords}{
risk minimisation,
optimisation on manifolds,
active learning,
optimally weighted least squares,
convergence rates
}
\newcommand{\ownAMS}{
49M15, 
93E24, 
68T07, 
90C30, 
60H30 
}
\title{\ownTitle 
}
\author{
  Robert Gruhlke \\
  Fachbereich Mathematik und Informatik\\
  Freie Universit\"at Berlin\\
  Arnimallee 6, 14195 Berlin, Germany\\
  \texttt{r.gruhlke@fu-berlin.de} \\
  \And
  Anthony Nouy \\
  Centrale Nantes \& Nantes Universit\'e\\
  Laboratoire de Math\'ematiques Jean Leray,
  CNRS UMR 6629\\
  1 rue de la Noe, 44321 Nantes, France\\
  \texttt{anthony.nouy@ec-nantes.fr}\\
  \And
  Philipp Trunschke \\
  Centrale Nantes \& Nantes Universit\'e\\
  Laboratoire de Math\'ematiques Jean Leray,
  CNRS UMR 6629\\
  1 rue de la Noe, 44321 Nantes, France\\
  \texttt{philipp.trunschke@univ-nantes.fr}\\
}
\begin{document}
\maketitle

\begin{abstract}
    We consider the problem of optimising the expected value of a loss functional over a nonlinear model class of functions, assuming that we have only access to realisations of the gradient of the loss.
This is a classical task in statistics, machine learning and physics-informed machine learning.
A straightforward solution is to replace the exact objective with a Monte Carlo estimate before employing standard first-order methods like gradient descent, which yields the classical stochastic gradient descent method.
But replacing the true objective with an estimate ensues a ``generalisation error''.
Rigorous bounds for this error typically require strong compactness and Lipschitz continuity assumptions while providing a very slow decay with sample size.
\revision[0]{To alleviate these issues, we propose a version of natural gradient descent that is based on optimal sampling methods.}
Under classical assumptions on the loss and the nonlinear model class, we prove that this scheme converges almost surely monotonically to a stationary point of the true objective.
\revision[0]{Under Polyak-\L{}ojasiewicz-type conditions, this provides bounds for the generalisation error.}
As a remarkable result, we show that our stochastic optimisation scheme achieves the linear or exponential convergence rates of deterministic first order descent methods under suitable conditions.
\end{abstract}

\keywords{\ownKeywords}
\textit{\textbf{AMS subject classifications}}\ \ {\ownAMS}

\section{Introduction}

Let $\mathcal{X}$ be a set equipped with a  probability measure $\rho$ and $\mathcal{H}:=\mcal{H}(\mcal{X},\rho)$ be a Hilbert space of functions defined on $\mathcal{X}$, equipped with the norm $\|\cdot\|$ \revision[0]{which is induced by the inner product $\pars{\cdot, \cdot}$.}
Given a \emph{loss function} $\ell : \mathcal{H} \times \mathcal{X} \to \mathbb{R}$, we consider the optimisation problem
\begin{equation}
\label{eq:min_problem}
    \min_{v\in\mathcal{M}} \mathcal{L}(v),\qquad
    \mathcal{L}(v):= \int \ell(v; x) \,\mathrm{d}\rho(x) 
    = \mathbb{E}_{x\sim \rho}[\ell(v;x)]
\end{equation}
where $\mathcal{M}\subseteq\mathcal{H}$ is a possibly nonlinear \emph{model class} and $\mathcal{L}$ is the \emph{risk} or expected loss.
When computing the integral is infeasible, a common approach is to replace the exact risk with an \emph{empirical risk} (that is, a Monte Carlo estimate of the risk) before employing a standard optimisation scheme.
However, using an estimated risk instead of the true risk can result in a ``generalisation error''.
Rigorous bounds for this error usually require compactness of $\mathcal{M}$ and Lipschitz continuity of $\mathcal{L}$ and provide a very slow decay with increasing sample size~\cite{Cucker2007}.
This slow decay is unfavourable in settings where high accuracy is required or sample generation (or acquisition) is costly.
Moreover, a plain sampling from $\rho$ can lead to a bad condition number of the resulting empirical risk~\cite{cohen2017optimal}.
To address these issues, we propose a new iterative algorithm that performs successive corrections in local linearisations of $\mathcal{M}$.
We provide convergence rates in terms of the true loss $\mathcal{L}$, \revision[0]{proving almost sure convergence to stationary points} and providing bounds for the generalisation error \revision[0]{under Polyak--\L{}ojasiewicz(P\L{})-type conditions}.

Even under \revision[0]{convexity} assumptions on the objective $\mathcal{L}$, classical \emph{stochastic gradient descent} (SGD) methods converge at the relatively slow rate of at most $\mathcal{O}(t^{-1})$ with the number of steps $t$.
In contrast, under the same assumptions, deterministic gradient descent converges significantly faster, i.e.\ with exponential rate $\mathcal{O}(a^t)$ for some $a\in(0,1)$.
This, however, requires exact knowledge of the objectives gradients which may be impossible or prohibitively costly.
In this setting, the question naturally arises whether it is possible to find an optimisation scheme, that is based on empirical estimates of the gradient but comes close to the convergence rates of deterministic gradient descent.
The present manuscript provides an affirmative answer to this question, using the framework of active learning.

\subsection{Setting}
\label{sec:method}

Let $(\Omega, \Sigma, \mathbb{P}, (\mathcal{F}_t)_{t\geq 0})$ be a filtered probability space.
Given an $\mathcal{F}_0$-measurable initial guess $u_0$ we iteratively define
\begin{equation}
\label{eq:descent_scheme}
\begin{aligned}
    \bar{u}_{t+1}
    &:= u_t - s_t P_t^n g_t,
    \qquad
    g_t := \nabla \mathcal{L}(u_t), \\
    u_{t+1}
    &:= R_{t}(\bar{u}_{t+1}) .
\end{aligned}
\end{equation}
Here, we assume that the subsequent assumptions are satisfied at every step $t\in\mathbb{N}$.
\begin{enumerate}[label=(\roman*)]
    \item $\mathcal{L} : \mathcal{H}\to\mathbb{R}$ admits a Fréchet derivative $D\mathcal{L}(u_t)$ at $u_t$ and we can compute the gradient $\nabla \mathcal{L}(u_t)$ as the Riesz representative of $D\mathcal{L}(u_t)$ in $\mathcal{H}$.
    \item There exists a $\mcal{F}_t$-measurable linear space $\mathcal{T}_t$ of dimension $d_t$ ``approximating'' $\mathcal{M}$ locally around $u_t$.
    \item There exists a $\mathcal{F}_t$-measurable estimator $P_t^n$ of the $\mathcal{H}$-orthogonal projector $P_t\colon \mathcal{H}\to \mathcal{T}_t$.
    \item The step size $s_t>0$ is $\mathcal{F}_t$-measurable.
    \item There exists a $\mathcal{F}_t$-measurable retraction mapping $R_t : \mathcal{H} \to \mathcal{M}$.
\end{enumerate}
This procedure is illustrated graphically in Figure~\ref{fig:algo}. 

\begin{figure}[htb]
\begin{center}
    \begin{tikzpicture}
    
    \node (M) at (1.0,2.75) {$\mathcal{M}$};
    \draw[line width = 1pt]  (0.5,2.5) to[out=350,in=150] (2,2)  to[out=-30,in=0]  (-2,0);
    
    \draw[line width = 1pt,-] (-1.8,.-0.625) -- (6., 2.7);
    \node at (5.3, 2.75) {$\mathcal{T}_t$};
    
    \node[inner sep=1, outer sep=1] (u_t) at (1.5,0.8) {};
    \filldraw (u_t) circle (2pt);
    \node at (1.3,1.) {$u_t$};
    \node[inner sep=1, outer sep=1] (utpproj) at (3.45, 1.6) {};
    \filldraw (utpproj) circle (2pt);
    \node at (3.4, 1.9) {$\bar{u}_{t+1}$};
    \node[inner sep=1, outer sep=1] (utp) at (2.35, 1.6) {};
    \filldraw (utp) circle (2pt);
    \node at (1.9, 1.6) {$u_{t+1}$};
    \node[inner sep=1, outer sep=1] (utpexact) at (1.5 + 4.2*0.75, 0.9  -.3*0.75) {};
    \node at (5.4, 0.65) {$u_t - g_t$};
    \filldraw (utpexact) circle (2pt);
    \node[inner sep=1, outer sep=1] (utpn) at (1.12 + 4.2*0.75, 2.18  -.3*0.75) {};
    \filldraw (utpn) circle (2pt);
    \node at (5.35, 1.9) {$u_t - P^n_t g_t$};


    \draw[dashed, gray] (utp) -- (utpproj) -- (utpn) -- (utpexact) -- (u_t);





\end{tikzpicture}
\end{center}
\captionsetup{width=.8\linewidth}
\caption{Illustration of the proposed algorithm.
Starting from the iterate $u_t\in\mathcal{M}$, an approximation $P_t^n g_t \in \mathcal{T}_t$ of the true gradient $g_t$ is computed via a random projection $P_t^n$ onto the linear space $\mathcal{T}_t$.
Subsequently, an intermediate, linear update $\bar{u}_{t+1} = u_t - s_t P_t^n g_t$ is performed with step size $s_t$.
Finally, the next iterate $u_{t+1}\in\mathcal{M}$ is obtained through application of the retraction map $R_t$.}
\label{fig:algo}
\end{figure}

To give a concrete example, suppose that $\mathcal{H} := L^2(\mathcal{X})$ and $\mathcal{L}(v) := \tfrac12\norm{v - u^*}_{L^2(\rho)}^2$ is the least squares risk with respect to $\rho$.
Moreover, consider a parameterised model class $\mathcal{M} = \{F(\theta) : \theta \in \mathbb{R}^D\}$ with differentiable $F : \mathbb{R}^D \to \mathcal{H}$.
\begin{enumerate}[label=(\roman*)]
    \item In this setting $\nabla\mathcal{L}(u_t) = u_t - u^*$.
    \item The space $\mathcal{T}_t$ around $u_t = F(\theta_t) $ can be any subspace of $\operatorname{span}\braces{\partial_{k} F(\theta_t)\;:\; k=1,\hdots,D}$. 
    \item We can draw $x_t^1,\ldots,x_t^n\sim\rho$ i.i.d.\ and estimate $P_t g_t$ by a linear least squares projection using point evaluations of the gradient $g_t(x_t^1), \ldots, g_t(x_t^n)$.
    \item We can choose $s_t := t^{-3/4}$ satisfying the Robbins--Monro condition, \revision[0]{i.e.\ $(s_t)_t\in \ell^2\setminus\ell^1$.}
    \item Given $u_t = F(\theta_t)$ and $P_t^ng_t = \sum_{k=1}^D \vartheta_k \partial_kF(\theta_t)$, for some $\vartheta\in\mathbb{R}^D$, we can define $R_t(u_t + s_t P_t^ng_t) = F(\theta_t + s_t\vartheta)$ (see section~\ref{sec:shallow} for further discussion).
    Alternatively, when $\mathcal{M}$ is a manifold, we can use a retraction in the classical differential-geometric sense (cf.~\cite[Section~3.6]{Boumal2023-yt}).
\end{enumerate}
Of course, convergence can only be guaranteed when several other assumptions, like bounded variance of the estimates, or a controlled error of the retraction step $u_{t+1} := R_t(\bar{u}_{t+1})$, are satisfied.
These assumptions are discussed in section~\ref{sec:assumption}.

\revision[0]{
Throughout the document, we make use of the notation $\lesssim$, referring to ``bounded from above up to a constant'', and $\asymp$, referring to ``bounded from below and above up to constants''.
These shorthand notations correspond to the Bachmann–Landau symbols $\mathcal{O}$ and $\Theta$, respectively.
}

\revision[0]{
\begin{remark}
   The discussed setup includes the setting where $\mathcal{M}$ is a Riemannian manifold.
   However, the setup is more general and allows an application to the optimization of artificial neural networks, which in general do not form manifolds.
   For this we consider the concept of the linear space $\mathcal{T}_t$ with flexible dimension $d_t$, which in the case of a smooth Riemannian manifold would take the role of the tangent space with constant dimension $d_t\equiv d\in\mathbb{N}$ if $\mathcal{M}$ has finite dimension.
\end{remark}
}

\subsection{Contributions}

\revision[0]{This work analyses the influence of the sampling density on the convergence of multiple stochastic gradient descent-type algorithms.}
Equation~\eqref{eq:descent_scheme} provides a general framework for analysing algorithms such as \emph{stochastic gradient descent} (SGD) and \emph{natural gradient descent} (NGD), but differs from many classical approaches in two ways.
\begin{enumerate}
    \item We define the risk as a functional on a Hilbert space $\mcal{H}$ and consider the optimisation not over a parameter space $\mathbb{R}^D$ but the corresponding model class $\mathcal{M} = \{F(\theta) : \theta \in \mathbb{R}^D\} \subseteq \mathcal{H}$.
    This brings the advantage that Lipschitz smoothness and (strong) convexity parameters of the function $\mcal{L}$ are often known exactly while
    the corresponding bounds for the combined parameter-to-risk map $\theta\mapsto \mcal{L}(F(\theta))$ are prohibitively large or even inaccessible.
    Direct access to the smoothness and convexity parameters also means that hyper-parameters, like step size, are easy to choose and don't have to be tuned algorithmically.

    \item We assume that we have control over the sample generation process.
    Using a particular importance sampling strategy, known as \emph{optimal sampling}, ensures that the gradient is estimated with bounded variance.
    This results in faster and more monotone convergence, as illustrated in Figure~\ref{fig:gaussian_convergence} in section~\ref{sec:examples}. 
\end{enumerate}

We analyse the convergence of algorithm~\eqref{eq:descent_scheme} to a minimiser or stationary point of the true risk $\mcal{L}$ (not an empirical estimate) over the model class $\mcal{M}$, with the potential of yielding theoretical bounds for the generalisation error. 
In particular, we provide (to the best of our knowledge) the first proof of convergence for NGD in this general setting.
The convergence rates of our algorithms in different settings are presented in~Table~\ref{tab:rates}.
\revision[0]{In the unbiased case, our proposed \emph{stochastic gradient descent–type} algorithm achieves convergence rates comparable to deterministic first-order methods. In contrast, for biased stochastic descent methods, convergence to a stationary point is generally impossible; only convergence to a neighborhood can be expected, and we provide the corresponding analysis for our descent scheme.}

We do not consider line search methods for adaptive step size selection since this would require estimating the risk $\mcal{L}$ with certified error bounds.
This would command an a priori variance bound, which seems an implausible assumption in our setting.
An example of such a strategy, under the strong assumption that function values and gradients are estimated with almost surely uniformly bounded errors, is presented in~\cite{sun2022trust}.

\begin{table}
    \begin{tabular}{cccccc}
    \toprule
      & \phantom{a} &\multicolumn{4}{c}{\textbf{rates}} \\
        \cmidrule{3-6}
        \textbf{assumptions}
            &
            & GD
            & best-case (ours)
            & worst-case (ours)
            & SGD
        \\
        \midrule
            \eqref{eq:lower_bound} + \eqref{eq:L-smooth} + \eqref{eq:C-retraction}
            &
            & $\mathcal{O}(t^{-1})$
            & $\mathcal{O}(t^{-1+\varepsilon})$ \textsuperscript{[eq.~\eqref{eq:rate_recovery_BLSCR}]}
            & $\mathcal{O}(t^{-1/2+\varepsilon})$ \textsuperscript{[eq.~\eqref{eq:rate_nonrecovery_BLSCR}]}
            & $\mathcal{O}(t^{-1/2+\varepsilon})$
        \\
            \eqref{eq:L-smooth} + \eqref{eq:mu-PL_strong} + \eqref{eq:C-retraction}
            &
            & $\mathcal{O}(a^t)$
            & $\mathcal{O}(a^t)$ \textsuperscript{[eq.~\eqref{eq:rate_recovery_LSSPLCR}]}
            & $\mathcal{O}(t^{-1+2\varepsilon})$ \textsuperscript{[eq.~\eqref{eq:rate_nonrecovery_LSSPLCR}]}
            & $\mathcal{O}(t^{-1+2\varepsilon})$
        \\
    \bottomrule
    \end{tabular}
    \centering
    \captionsetup{width=.8\linewidth}
    \caption{Almost sure convergence rates for different algorithms with $\varepsilon\in\pars{0, \tfrac{1}{2}}$ and $a\in\pars{0,1}$ depending on the chosen step size. 
    The required assumptions can be found in the linked references and equations.
    The convergence rates for GD and SGD can be found in~\cite{Bubeck2015,Boumal2023-yt,liu2022sure,garrigos2024handbook}.}
    \label{tab:rates}
\end{table}

\subsection{Related work}

The classical analysis of SGD algorithms in the case $\mathcal{M}=\mathcal{H}=\mathbb{R}^d, d\in\mathbb{N}$, relies on the \emph{ABC condition},
$$
    \mbb{E}\bracs*{\norm{\widehat {\nabla \mcal{L}(u)}}^2}
    \le A(\mcal{L}(u) - \mcal{L}_{\mathrm{min}}) + B\norm{\nabla \mcal{L}(u)}^2 + C,
$$
where $\widehat {\nabla \mcal{L}(u)}$ is an unbiased estimate of the gradient $\nabla \mcal{L}(u)$ and $\mcal{L}_{\mathrm{min}}$ is the infimum of $\mcal{L}$ on $\mathcal{H}$.
This condition is 
presented in~\cite{khaled2020better} as ``the weakest assumption'' for analysing SGD in the non-convex setting.
The parameter $C$ in the ABC condition
captures the variance of the stochastic gradient and critically influences the convergence rate.
In the best case ($C=0$), the resulting algorithm exhibits the same convergence rate as classical \emph{gradient descent} (GD).
In the typical case ($C>0$), however, the algorithm follows the classical SGD rates.
Our work can be seen as a \revision[0]{complementary} analysis of SGD, assuming $A=0$ and focusing on the constants $B$ and $C$ of the ABC condition.

SGD can also be interpreted as a first-order optimisation method with an inexact, stochastic oracle.
For deterministic perturbations, such methods have been analysed extensively, and analysis for the special case of smooth and convex cost functions can, for example, be found in~\cite{Devolder2013}.
Stochastic perturbations have been considered, e.g.\ in~\cite{Dvinskikh2020,Sadiev2024,Khaled2023}.
\revision[0]{
While our framework covers the setup beyond the manifold setup, we note that Riemannian stochastic approximation schemes with fixed step size have already been investigated in~\cite{pmlr-v130-durmus21a}.
}

When expressed in the parameter space $\mathbb{R}^D$ of a nonlinear model class $\mathcal{M} = \braces{F(\theta) \;:\; \theta\in\mathbb{R}^D}$, the optimisation scheme~\eqref{eq:descent_scheme} can be seen as a \emph{natural gradient descent} (NGD)~\cite{martens2020ngd,nurbekyan2023efficient,mueller2023NGD_PINNs,NEURIPS2018_7f018eb7}.
\revision[0]{Under suitable conditions on $\mathcal{L}$ and $F$ and with an appropriate choice of $\mathcal{H}$, NGD is a Newton-like algorithm.}
Our work weakens several of the assumptions and puts a spotlight on the sampling.
The idea of choosing problem-adapted samples is well-known as importance sampling and has been explored for learning neural networks~\cite{Bruna2024Jan,adcock2022cas4dl,adcock2023cs4ml}.
However, in general, there is no efficient importance sampling strategy for estimating a risk functional simultaneously for all functions from a highly nonlinear set $\mathcal{M}$ (see, e.g., the negative results for low-rank tensor networks in \cite{eigel2022convergence,trunschke2023weighted}).
For this reason, we adopt an adaptive approach using successive updates in linear spaces, for which optimal sampling distributions have been rediscovered recently~\cite{cohen2017optimal} and can be computed explicitly.

In the special case of linear least squares approximation,
recent years have seen a lot of active developments in the field of optimal weighted least squares projections~\cite{cohen2017optimal,cohen2020optimal}.
Using a suitable least squares projection $P_t^n$ as an estimate for $P_t$ allows to satisfy a  quasi-optimality property in expectation~\cite{haberstich2022boosted}, i.e.\ for all $v\in \mcal{H}$,
\begin{align}
    \mbb{E}\bracs*{{\norm{v - {P}^n_t v}^2} }
    &\le \pars*{1 + c\tfrac{d_t}{n}}{\norm{v - P_t v}^2}
\end{align}
for some constant $c>0$.
An alternative to the least squares projection that ensures that $P_t^n v$ is an unbiased estimate of the orthogonal projection $P_t v$ for any $v \in \mathcal{H}$ is the \textit{quasi-projection}.
As a consequence of Lemma~\ref{lem:quasi-projection_norm}, this estimator satisfies   
\begin{align}
    \mbb{E}\bracs*{\norm{v -  P_t^n v}^2} \le (1+\tfrac{d_t} n)\norm{v - P_t v}^2 + \tfrac{d_t-1}n \norm{P_t v}^2 .
\end{align}
Since the quasi-projection does not guarantee quasi-optimality, the reader may be surprised that quasi-projections yield comparable theoretical results to least squares projections in Corollary~\ref{cor:convergence_up_to_precision}.
The flaw in this argument lies in comparing only a single step of the algorithm.
Multiple cheap quasi-projection steps can achieve the same convergence rate as a single expensive least squares projection.
We finally note, that using a least squares projection based on volume-rescaled sampling  \cite{derezinski2022unbiased,nouy2023dpp} presents both advantages, i.e.\ the corresponding empirical projection is unbiased and achieves quasi-optimality in expectation.
This, however, comes at the price of a challenging sampling problem, which is still an active field of research.

The present manuscript follows the same line of thought as the two recent works~\cite{adcock2022cas4dl,adcock2023cs4ml}, where the practical relevance of optimal sampling in machine learning is demonstrated.
Our theoretical results indicate that it is beneficial to extend the works mentioned above to cure some theoretical illnesses in learning with neural networks:
\begin{enumerate}
    \item Both works linearise the network only in the last layer, resulting in samples that are optimal only for updating this last layer.
    However, changes in earlier layers are even more significant since errors in these layers are amplified by all subsequent layers.
    \item Both works use a random sample for the orthogonalisation step.
    This was already proposed in~\cite{cohen2020optimal}, where it was also discussed that the worst-case bounds for the number of samples needed for a stable orthogonalisation may be infeasible.
    This may arguably happen for a model class as complicated as neural networks.\footnote{
    On the one hand, uniform samples can not guarantee a stable orthogonalisation without the curse of dimensionality~\cite{cohen2020optimal}.
    On the other hand, the exact orthogonalisation of the layers of an arbitrary neural network is NP-hard~\cite{9022212}.
    This means that we have to estimate the inner product, but we must be very meticulous about choosing the samples that we use.
    }
    \item The samples are not drawn according to the optimal distribution but are subsampled from a larger i.i.d.\ sample from $\rho$.
    The approximation with respect to this subsample can thus not be better than the approximation using the full sample.
\end{enumerate}

\revision[0]{
We finally note that drawing samples from a general optimal sampling distribution remains an important open problem, since these densities can be non-smooth and are typically extremely multimodal.
The works~\cite{adcock2022cas4dl,adcock2023cs4ml} demonstrate impressive performance for the model class of neural networks, yet, their sampling method has no theoretical guarantee (see~\cite{trunschke2024optimalsamplingsquaresapproximation} for a detailed discussion).
For low-dimensional linear spaces an algorithm with bounded complexity is provided in~\cite{cohen2017optimal} and improved in~\cite{cohen2020optimal,Arras2019hierarchy}.
This algorithm can be generalised to sets of sparse vectors (see~\cite{trunschke2023convergence} for a formula for the density, see also~\cite{Adcock2022book}).
If the sampling distribution can be expressed as a tensor network with bounded rank, an efficient sampling algorithm is provided in~\cite{Dolgov2019density}.
This is, e.g., the case when optimising tree tensor networks~\cite{haberstich2023active}.
For general model classes, the optimal sampling density can be approximated by covering arguments~\cite{trunschke2023convergence} or estimated recursively~\cite{trunschke2024optimalsamplingsquaresapproximation,herremans2025refinementbasedchristoffelsamplingsquares}.
}

\subsection{Outline}

The remainder of this work is organised as follows.
Section~\ref{sec:assumption} summarises and discusses the assumptions for our propositions.
Examples for different estimators of the orthogonal projection are presented in section~\ref{sec:setting}.
We state our convergence theory in section~\ref{sec:Convergence_theorey}, where section~\ref{sec:Convergence_expectation} covers convergence in expectation and section~\ref{sec:Convergence_as} covers almost sure convergence.
Finally, section~\ref{sec:examples} discusses the assumptions and illustrates the theoretical results for linear spaces and shallow neural networks.
\section{Assumptions}
\label{sec:assumption}

To analyse the convergence of the presented algorithm, we define
$$
    \mcal{L}_{\mathrm{min}}:=\inf\limits_{u\in\mathcal{H}} \mathcal{L}(u) ,
    \qquad\text{and}\qquad
    \mcal{L}_{\mathrm{min},\mcal{M}}:=\inf\limits_{u\in\mathcal{M}} \mathcal{L}(u) .
$$
\begin{revisione}[0]{
For a given iterate $u_t\in \mcal{M}$, let $\mcal{T}_t$ be the linear space defined by the algorithm at the point $u_t$, $P_t$ be the $\mcal{H}$-orthogonal projection onto $\mcal{T}_t$, $P_t^n$ an empirical estimator of $P_t$ and $R_t$ be the retraction map at $u_t$.
We assume, depending on the concrete setting, that some of the following properties are satisfied.
\begin{itemize}
    \item \emph{Boundedness from below:} It holds that $\mcal{L}_{\mathrm{min},\mcal{M}}\in\mathbb{R}$ is finite and thus for all $u\in\mcal{M}$
    \begin{equation} \label{eq:lower_bound}
        \mcal{L}_{\mathrm{min},\mcal{M}} \le \mcal{L}\pars{u} . \tag{B}
    \end{equation}
    \item \emph{(Restricted) $L$-smoothness:} There exists $L > 0$ such that for all $u_t\in\mcal{M}$ with associated $\mcal{T}_t$ and $g\in\mcal{T}_t$
    \begin{equation} \label{eq:L-smooth}
        \mcal{L}\pars{u_t + g} \le \mcal{L}\pars{u_t} + \pars{\nabla \mcal{L}\pars{u_t}, g} + \frac{L}{2}\norm{g}^2 . \tag{LS}
    \end{equation}
    \item \emph{$\lambda$-Polyak-\L{}ojasiewicz}: There exists $\PL>0$ such that for all $u\in\mcal{M}$ 
    \begin{equation} \label{eq:mu-PL}
        \norm{\nabla \mcal{L}(u)}^2 \ge 2\PL \pars{\mcal{L}(u) - \mcal{L}_{\mathrm{min}}} . \tag{PL}
    \end{equation}
    \item \emph{Strong $\lambda$-Polyak-\L{}ojasiewicz}: There exists $\PL>0$ such that for all $u_t\in\mcal{M}$ with associated $\mcal{T}_t$ it holds
    \begin{equation} \label{eq:mu-PL_strong}
        \norm{P_t \nabla \mcal{L}(u)}^2 \ge 2\PL \pars{\mcal{L}(u) - \mcal{L}_{\mathrm{min},\mathcal{M}}} . \tag{SPL}
    \end{equation}

    \item \emph{Controlled retraction error:}
    There exists a constant $C_{\mathrm{R}}>0$ such that for any $u_t\in\mcal{M}$, with associated space $\mcal{T}_t$, and all $g\in\mcal{T}_t$ and all $\beta > 0$, we have a retraction $R_t : \mcal{H} \to \mcal{M}$ satisfying
    \begin{equation} \label{eq:C-retraction}
        \mcal{L}\pars{R_t(u_t + g)} \le \mcal{L}\pars{u + g} + \frac{C_{\mathrm{R}}}{2}\norm{g}^2 + \beta .
        \tag{CR}
    \end{equation}
  
    \item \emph{Bounded bias and variance of $P_t^n$:} 
    There exist positive constants $c_{\mathrm{bias}, 1}, c_{\mathrm{bias},2}, c_{\mathrm{var},1}$ and $c_{\mathrm{var},2}$ such that for all $u_t \in \mathcal{M}$, with associated  space $\mcal{T}_t$, 
    and any $g \in \mcal{H}$, it holds that
    \begin{equation} \label{eq:bbv}
    \begin{aligned}
        \mbb{E}\bracs*{\pars{g, P_t^n g} \mid \mathcal{F}_t} &\ge c_{\mathrm{bias}, 1}\norm{P_t g}^2 - c_{\mathrm{bias},2} \norm{P_t g}\norm{(I-P_t)g} \\ 
        \mbb{E}\bracs*{\norm{P_t^n g}^2 \mid \mathcal{F}_t} &\le c_{\mathrm{var},1} \norm{P_tg}^2 + c_{\mathrm{var},2} \norm{(I-P_t)g}^2.
    \end{aligned}
    \tag{BBV}
    \end{equation}
\end{itemize}
}
\end{revisione}

Assumptions~\eqref{eq:lower_bound},~\eqref{eq:L-smooth} and~\eqref{eq:mu-PL} are standard assumptions from optimisation theory, with assumption~\eqref{eq:lower_bound} being a necessary condition for the existence of a minimum.
The $L$-smoothness~\eqref{eq:L-smooth} and $\lambda$-Polyak-\L{}ojasiewicz~\eqref{eq:mu-PL} conditions (cf.~\cite{karimi2016linear}) are standard assumptions to show linear convergence in the first-order optimisation setting.
When $\mathcal{M} = \mathcal{H}$, condition~\eqref{eq:mu-PL} is implied by strong convexity of $\mathcal{L}$ but is strictly weaker, allowing for multiple global minimisers as long as any stationary point is a global minimiser.
Strong convexity, in turn, is often considered because every function is strongly convex locally around its non-degenerate minima, if those exist.
This means that related theorems give at least convergence results in the neighbourhood of such minima.

Condition~\eqref{eq:mu-PL_strong} is a stronger version of~\eqref{eq:mu-PL} and simplifies the analysis in this manuscript. \revision[0]{We note the different dependence on minimal loss values $\mathcal{L}_{\mathrm{min}}$ and $\mathcal{L}_{\mathrm{min},\mathcal{M}}$ for~\eqref{eq:mu-PL} and~\eqref{eq:mu-PL_strong}, respectively.}

\revision[0]{
Our generalized framework includes the manifold setup including various  applications of geodesically convex optimisation as provided in~\cite[Chapter~11]{Boumal2023-yt}.
In particular, we derive a sufficient condition for \eqref{eq:mu-PL_strong} in the special case of $\mathcal{M}$ being a Riemannian submanifold of $\mathcal{H}$ in Lemma~\ref{A3:Lemma_SPL} of Appendix~\ref{sec:A3}. This result generalises the relation between~\eqref{eq:mu-PL} and strong convexity to the case of manifolds. }

\revision[0]{A discussion of assumption~\eqref{eq:C-retraction} related to the retraction can be found in appendix~\ref{app:retraction}.}

The condition~\eqref{eq:bbv} is natural in the discussion of stochastic gradient descent type algorithms and characterises the bias and variance of the empirical (quasi-)projection operators discussed in section~\ref{sec:setting}.

We discuss the validity of these assumptions on three examples in section~\ref{sec:examples}.
\section{Estimators of orthogonal projections}
\label{sec:setting}
This section is devoted to the explicit construction of estimators $ P_t^n$ of the $\mcal{H}$-orthogonal projection $P_t\colon\mcal{H}\to \mathcal{T}_t$ onto a given space $\mathcal{T}_t$ at step $t$ of our algorithm. 
\revision[0]{We assume that the inner product of the Hilbert space $\mathcal{H} = \mcal{H}(\mcal{X},\rho)$ takes the form}
\begin{equation}
\label{eq:norm}
    (u,v) = \int (L_x u)^\intercal (L_x v) \dx[\rho\pars{x}] ,
\end{equation}
for a suitable family of linear operators $\braces{L_x : \mcal{H} \to \mbb{R}^l}_{x\in \mcal{X}}$, $l\in\mbb{N}$.
Two prominent examples of this setting are the following.
\begin{enumerate}
    \item[(i)] The Lebesgue space $L^2(\mcal{X},\rho)$ corresponds to the Hilbert space $\mcal{H}$ with the choice  $L_x v := v(x)$.
    \item[(ii)] The Sobolev space $H^1(\mcal{X},\rho)$ corresponds to the Hilbert space $\mcal{H}$ with the choice $L_x v := \begin{pmatrix} v(x) \\ \nabla v(x) \end{pmatrix}$.
\end{enumerate}
Given an orthonormal basis $\braces{b_k}_{k=1}^{d_t}$ of $\mcal{T}_t$, the projection $P_tg_t$ of $g_t\in\mcal{H}$ onto the subspace $\mcal{T}_t$  can be written as
\begin{equation}
\label{eq:H_projection}
    P_t g_t = \sum_{k=1}^{d_t} \eta_k b_k
    \qquad\text{with}\qquad
    \eta_k := (g_t, b_k) .
\end{equation}

To estimate the integral in~\eqref{eq:norm}, we introduce suitable, $\mathcal{F}_t$-dependent sampling measures $\mu_t$ and weight functions $w_t$ such that
\begin{equation}
\label{eq:measure_equality}
    w_t\dx[\mu_t] = \dx[\rho]
    \qquad\text{on}\qquad
    \supp(\mcal{T}_t) := \bigcup_{v\in\mcal{T}_t }\supp( \Vert L_{\bullet}v \Vert) .
\end{equation}
An unbiased estimate of the inner product $\pars{u, v}$ can then be defined as 
$$
    \pars{u, v}_n
    := \frac1n\sum_{i=1}^n w_t(x_t^i) \pars{L_{x_t^i} u}^\intercal \pars{L_{x_t^i} v},
$$
where the $x_t^1, \ldots, x_t^n$ are i.i.d.\ samples from $\mu_t$.
\revision[0]{
Finally, we define for an $\mathcal{H}$-orthonormal basis $\braces{b_k}_{k=1}^{d_t}$ of $\mcal{T}_t$ 
\begin{equation}
\label{eq:def:christoffel}
    \mfrak{K}_t(x) := \lambda^*\pars*{\sum_{k=1}^{d} \pars{L_{x}b_k}\pars{L_xb_k}^\intercal}
    \qquad\text{and}\qquad
    k_t := \norm{w \mfrak{K}_t}_{L^\infty(\rho)} .
\end{equation}
}

We now present three commonly used estimators $P_t^n$ of $P_t$, as well as their corresponding bias and variance bounds from assumption~\eqref{eq:bbv}.
Two additional estimators are discussed in appendices~\ref{sec:dpp-projection} and~\ref{app:debiased_projection}, respectively.

\begin{table}[hbt]
\caption{\revision[0]{Overview of bias and variance constants for different estimators $P_t^n$.
LSP stands for Least squares projection, VS for volume sampling. The appearing entities giving for each bias and variance are defined in the related section.
}}
\label{table:constant_overview}
\centering
\renewcommand{\arraystretch}{1.5}
\begin{tabular}{llccccc}
    \toprule
        Estimator & Def. & $c_{\mathrm{bias},1}$ & $c_{\mathrm{bias},2}$ & $c_{\mathrm{var},1}$ & $c_{\mathrm{var},2}$ & Proof \\
    \midrule
        Non-proj. & \ref{sec:non-projection} &  $\lambda_{\ast}(G)$ & $0$ & $\frac{\lambda^*(G)^2(n-1) + \lambda^*(G) k_t}{n}$ & $\frac{\lambda^*(G) k_t}n$ &
        \ref{lem:sgd_bias_variance}\\
        Quasi-proj. & 
        \ref{sec:quasi-projection} &
        $1$ & $0$ & $\frac{n-1+k_t}n$ & $\tfrac{k_t}{n}$ & \ref{lem:sgd_bias_variance} \\
        LSP & \ref{sec:least-squares_projection} & $1$ & $\tfrac{\sqrt{k_t}}{(1-\delta)\sqrt{p_{\mcal{S}_\delta}n}} $ & 
        $\tfrac{1}{(1-\delta)^2p_{\mcal{S}_\delta}}\tfrac{n-1+k_t}{n}$ & $\tfrac{1}{(1-\delta)^2p_{\mcal{S}_\delta}}\tfrac{k_t}{n} $ &
        \ref{lemma:QBP_bias}\\
        LSP with VS & \ref{sec:dpp-projection} & $1$ & $0$ & $c_{\mathrm{var},2} + \frac{4}{(1-\delta)^2}  $ & $\frac{d_t}{n}\left(1+ \frac{4}{(1-\delta)^2}\right)$ &
        \ref{lemma:DPP_bias} \\
        Debiased proj. & 
        \ref{app:debiased_projection} & $1$ & $0$ & $1$ & 
        \parbox[t]{8em}{$\begin{multlined}
            (c_{\mathrm{var},1}(Q_t)-1)B^2 \\
            + c_{\mathrm{var},2}(Q_t)
        \end{multlined}$} & \ref{lem:debiased_P_bounded} \\
    \bottomrule
\end{tabular}
\end{table}

\subsection{Non-projection}
\label{sec:non-projection}

Assume that $\mathcal{B}_t := \braces{\varphi_k}_{k=1}^{d}$ with $d\ge d_t$ forms a generating system of $\mcal{T}_t$.
Then an approximation of $g\in\mcal{H}$ with respect to the system $\mathcal{B}_t$ is given by
\begin{equation}
\label{eq:non-projection}
    P_t^n g := \sum_{k=1}^d \hat\zeta_k \varphi_k
    \qquad\text{with}\qquad
    \hat\zeta_k := \pars{g, \varphi_k}_n .
\end{equation}
It is easy to see that \textbf{$\boldsymbol{P_t^n}$ is not a projection}. 
Moreover, when $\mcal{B}_t$ is not an orthonormal basis, \textbf{$\boldsymbol{P_t^n}$ is not an unbiased estimate of $\boldsymbol{P_t}$}.
It is nevertheless important to study this operator because its use in~\eqref{eq:descent_scheme} yields the standard SGD method when $\mcal{H} = L^2(\mathcal{X},\rho)$ and $\ell(v;x) = \tilde \ell(v(x); x)$ with $\tilde \ell(\cdot ; x) : \mathbb{R} \to \mathbb{R}$ satisfies certain regularity assumptions.
\revision[0]{In this case the functional gradient of the loss coincides with the pointwise derivative of the scalar loss, and composing this gradient with a parametrization map yields the usual parameter gradient. Approximating the expectation by samples then leads exactly to a SGD update.
The formal argument, based on Leibniz’ rule and the chain rule, is provided in Appendix~\ref{app:leibniz}.}

\revision[0]{
We report the associated bias and variance constants from~\eqref{eq:bbv} in Table~\ref{table:constant_overview}.
There we see their dependence on the Gramian matrix $G_{k\ell} := (\varphi_k,\varphi_{\ell})$, implying that the system $\mcal{B}_t$ has to be chosen meticulously to ensure that $c_{\mathrm{bias},1}$ remains bounded away from zero and that $c_{\mathrm{var},1}$ and $c_{\mathrm{var},2}$ remain small.
}

\subsection{Quasi-projection}
\label{sec:quasi-projection}

To remove the dependence on \revision[0]{the gramian $G$ defined in the previous section} entirely, we assume that $\mathcal{B}_t = \braces{b_k}_{k=1}^{d_t}$ forms an $\mcal{H}$-orthonormal basis of $\mcal{T}_t$, i.e.\ $G_{kl} := (b_k, b_l) = \delta_{kl}$. \revision[0]{Then, $\lambda_{\ast}(G)=\lambda^{\ast}(G)=1$.}
The \emph{quasi-projection} of $g\in\mcal{H}$ with respect to the basis $\mathcal{B}_t$ is given by
\begin{equation}
\label{eq:quasi-projection}
    P_t^n g := \sum_{k=1}^{d_t} \hat\eta_k b_k
    \qquad\text{with}\qquad
    \hat\eta_k := \pars{g, b_k}_n .
\end{equation}
Although \textbf{$\boldsymbol{P_t^n}$ is not a projection},  it is an unbiased estimator of the real projection $P_t$ from ~\eqref{eq:H_projection}, \revision[0]{i.e. $c_{\mathrm{bias},2}=0$
as shown in Table~\ref{table:constant_overview}}.

\revision[0]{
\begin{remark}[Quasi-projection yields natural gradient descent]
Using the quasi-projection in~\eqref{eq:descent_scheme} leads to a natural gradient descent update.
Intuitively, the functional gradient is first expressed in the tangent space of the model manifold and then reweighted by the inverse Gram matrix induced by the parametrization, which corresponds to preconditioning the parameter gradient by the local geometry.
A brief derivation is given in Appendix~\ref{app:quasi_projection_relevance}.
\end{remark}
}

Even though $P_t^n g$ depends on the choice of $\mcal{B}_t$, the \revision[0]{associated bias and variance constants in Table~\ref{table:constant_overview}} hold for every orthonormal basis $\mcal{B}_t$.
\revision[0]{Although the constants in Table~\ref{table:constant_overview} no longer depend on $G$, the bounds still depend on the constant $k_t$ defined in~\eqref{eq:def:christoffel}.}
This constant depends on the space $\mcal{T}_t$ and may be unbounded.
Consider, for example, a space $\mcal{T}_t$ of univariate polynomials with degree $d_t-1$ in $ \mcal{H} = L^2\pars{\rho}$.
Then, we have the following two bounds:
\begin{enumerate}
    \item[(a)] $k_t = d_t^2$, if $w_t\equiv 1$ and $\rho$ is the uniform measure on $\mcal{X} := [-1, 1]$,
    \item[(b)] $k_t = \infty$, if $w_t\equiv 1$ and $\rho$ is the standard Gaussian measure on $\mcal{X} := \mbb{R}$.
\end{enumerate}

This demonstrates that it is crucial to control $k_t$ during the optimisation, which requires choosing a suitable sampling measure $\mu_t$ or corresponding weight function $w_t$.
Theorem~3.1 in~\cite{trunschke2023convergence} shows that the weight function $w_t$ minimising $k_t := \norm{w_t\mfrak{K}_t}_{L^\infty(\rho)}$ is
$$
    w_t = \norm{\mfrak{K}_t}_{L^1(\rho)} \mfrak{K}_t^{-1} .
$$
A discussion of $\mfrak{K}_t$ and an easier-to-compute surrogate $\tilde{\mfrak{K}}_t$ is provided in appendix~\ref{app:optimal_sampling}.
Both choices, $w_t = \norm{\mfrak{K}_t}_{L^1(\rho)} \mfrak{K}_t^{-1}$ and $w_t = \norm{\tilde{\mfrak{K}}_t}_{L^1(\rho)} \tilde{\mfrak{K}}_t^{-1}$, guarantee $k_t \le d_t$ and coincide when $l=1$.
The corresponding sampling is referred to as \emph{generalised Christoffel sampling}.

\begin{remark}
\label{rmk:complexity}
    Computing a quasi-projection with optimal sampling requires ${O}(d_t^3)$ operations and is significantly more expensive than a simple SGD step, requiring ${O}(d_t)$ operations.
    However, we will see that the convergence rate of SGD depends on the condition number of the Gramian matrix $\kappa(G)$ and on $k_t$, which may both grow uncontrollably during optimisation.
    Controlling these two constants is thus necessary to derive convergence rates with respect to the true risk $\mcal{L}$.
\end{remark}

\begin{remark}(Gradient-dependent importance sampling) 
\label{rmk:optimal_sampling}
    The proof of Lemma~\ref{lem:sgd_bias_variance} relies on the bound
    \begin{equation}
    \label{eq:tigther_variance_bound}
        \frac{1}{n}\int w_t(x)\mfrak{K}_t(x)\norm{L_xg}_2^2\dx[\rho(x)]
        \le \tfrac{1}{n}\norm{w_t\mfrak{K}_t}_{L^\infty(\rho)} \norm{g}^2
        = \tfrac{k_t}{n} \norm{g}^2 .
    \end{equation}
    Choosing $w_t$ to minimise the upper bound $k_t = \Vert w_t\mfrak{K}_t\Vert_{L^\infty(\rho)}$ instead of the left-hand side in~\eqref{eq:tigther_variance_bound} is thus not optimal and ignores the relation of $w_t$ to the current gradient $g$.
    A notable example follows from the bound
    $$
        \frac{1}{n}\int w_t(x)\mfrak{K}_t(x)\norm{L_xg}_2^2\dx[\rho(x)]
        \le \frac{1}{n}\int \mfrak{K}_t(x) \dx[\rho(x)] 
        \sup_{x\in\mcal{X}} w(x)\norm{L_xg}_2^2
        \le \frac{d_t}{n} \sup_{x\in\mcal{X}} w_t(x)\norm{L_xg}_2^2 .
    $$
    In particular, if the model class is compact and $\norm{L_x g}_2 \le B\norm{g}$, the bias terms remain bounded even for $w_t\equiv 1$.
    This is essentially the ABC condition with $A=C=0$ and shows that stochastic gradient descent can converge without optimal sampling under strong (implausible) assumptions on the gradients.
\end{remark}

\subsection{Least squares projection}
\label{sec:least-squares_projection}

Although the quasi-projection exhibits uniformly bounded \eqref{eq:bbv} constants, it has two disadvantages: (i) it requires knowledge of an orthonormal basis $\mathcal{B}_t$, and (ii) it is not a projection and therefore does not exhibit quasi-optimality properties.
To tackle these problems, we define the empirical Gramian $\hat{G}_{kl} := (\varphi_k, \varphi_l)_n$ as well as $\hat\eta_k := (g, \varphi_k)_n$.
Then the least squares projection of $g\in\mcal{H}$ onto $\mcal{T}_t$ is given by
\begin{equation}
\label{eq:least-squares-projection}
    P_t^n g := \sum_{k=1}^d \doublehat\eta_k \varphi_k
    \qquad\text{with}\qquad
    \doublehat\eta := \hat{G}^+ \hat\eta .
\end{equation}
If $\varphi_k = b_k$ is an $\mathcal{H}$-orthonormal basis of $\mathcal{T}_t$, this equation can be seen as a generalisation of equation~\eqref{eq:quasi-projection}, where the Gramian matrix $G=I$ is estimated as well.
However, since the resulting operator $P_t^n$ is a projection, its computation no longer requires knowledge of an orthonormal basis $\mathcal{B}_t$.
Moreover, it satisfies the quasi-optimality property
\begin{equation}
\label{eq:Ptn-quasi-opt}
    \norm{g - P_t^ng}^2
    \le \norm{g - P_tg}^2 + \tfrac{1}{1-\delta}\norm{g - P_tg}_{n}^2 ,
\end{equation}
where $\delta := \norm{I - \hat{G}}$ for an estimated Gramian matrix $\hat{G}$ with respect to an arbitrary but fixed orthonormal basis $\mcal{B}_t$.
This comes at the cost that \textbf{$\boldsymbol{P_t^n}$ is not an unbiased estimate of $\boldsymbol{P_t}$}. 

From equation~\eqref{eq:Ptn-quasi-opt}, it is clear that the quantity $\norm{I-\hat{G}}$ plays a crucial role in bounding the error of $P_t^n$, and it is reasonable to draw the sample points not from $\mu_t$ directly but conditioned on the event
\begin{equation}
\label{eq:stability}
    \mcal{S}_\delta := \{\norm{I - \hat{G}} \le \delta \} .
\end{equation}
This makes the least squares projection well-defined and numerically stable and also allows the use of subsampling methods to further reduce the sample size~\cite{haberstich2022boosted,dolbeault2022optimalL2,bartel2023}.
It can be
shown~\cite{cohen2017optimal} that, if the samples are drawn optimally,
$$
    \mbb{P}\pars{\mathcal{S}_\delta}
    \ge 1 - 2d_t \exp\pars{-\tfrac{\delta^2 n}{2 d_t}} .
$$
\revision[0]{The bias and variance constants for this projection are reported in Table~\ref{table:constant_overview}.}

\begin{remark}
    Note that although an orthonormal basis is no longer required to compute the least squares projection, it is still required in all currently known methods to compute an exact sample from $\mu_t$.
    This is a known problem and the focus of current research~\cite{trunschke2024optimalsamplingsquaresapproximation}.
\end{remark}

Despite all the advantages that the least squares projection brings, it also suffers from a severe disadvantage, namely that $c_{\mathrm{bias},2} > 0$, which makes the estimate not well-suited for iterative procedures, as will be discussed at the beginning of section~\ref{sec:Convergence_theorey}.
When $\mcal{H} = L^2(\rho)$, this problem is solved by a weighted least squares projection $P_t^n$ based on \emph{volume-rescaled sampling}~\cite{derezinski2022unbiased}.
The resulting projector is discussed in appendix~\ref{sec:dpp-projection}.
Another option, which works for all $\mathcal{H}$, is to ``debias'' the projection and is discussed in appendix~\ref{app:debiased_projection}.

\section{Convergence theory}
\label{sec:Convergence_theorey}

\begin{theorem}
\label{thm:descent_quasi_projection}
    Assume that the loss function satisfies assumptions~\eqref{eq:L-smooth} and~\eqref{eq:C-retraction} with a $t$-dependent perturbation $\beta_t\ge 0$ and that the projection estimate satisfies assumption~\eqref{eq:bbv}.
    Then at step $t\ge0$ it holds that
    \begin{alignat}{2}
        \mbb{E}\bracs{\mcal{L}\pars{u_{t+1}}\,|\, \mcal{F}_t}
        &\le \mcal{L}\pars{u_t}
        &&- \pars{c_{\mathrm{bias},1} s_t - s_t^2 \tfrac{L+C_{\mathrm{R}}}{2}c_{\mathrm{var},1}} \norm{P_tg_t}^2 \\
        &&&+ s_t c_{\mathrm{bias},2} \norm{P_tg_t}\norm{(I-P_t)g_t} \\
        &&&+ s_t^2 \tfrac{L+C_{\mathrm{R}}}{2} c_{\mathrm{var},2}\norm{(I-P_t)g_t}^2 \\
        &&&+ \beta_t .
        \label{eq:descent_bound}
    \end{alignat}
\end{theorem}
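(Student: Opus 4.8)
The plan is to chain together the three defining relations of the algorithm — the restricted $L$-smoothness \eqref{eq:L-smooth}, the controlled retraction error \eqref{eq:C-retraction}, and the bias/variance bounds \eqref{eq:bbv} — and then take conditional expectation with respect to $\mathcal{F}_t$. The key observation is that all three assumptions are stated precisely for increments lying in $\mathcal{T}_t$, and the increment $g := -s_t P_t^n g_t$ used in the update $\bar u_{t+1} = u_t - s_t P_t^n g_t$ indeed lies in $\mathcal{T}_t$ (since $P_t^n$ maps into $\mathcal{T}_t$), so the assumptions apply verbatim.

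\emph{Step 1 (deterministic chaining for a fixed sample).} First I would apply \eqref{eq:C-retraction} with the increment $g = -s_t P_t^n g_t \in \mathcal{T}_t$ and perturbation $\beta = \beta_t$ to get
\[
    \mathcal{L}(u_{t+1}) = \mathcal{L}(R_t(\bar u_{t+1})) \le \mathcal{L}(u_t - s_t P_t^n g_t) + \tfrac{C_{\mathrm{R}}}{2} s_t^2 \norm{P_t^n g_t}^2 + \beta_t .
\]
Then I would apply \eqref{eq:L-smooth} to the first term on the right, again with $g = -s_t P_t^n g_t$, yielding
\[
    \mathcal{L}(u_t - s_t P_t^n g_t) \le \mathcal{L}(u_t) - s_t (g_t, P_t^n g_t) + \tfrac{L}{2} s_t^2 \norm{P_t^n g_t}^2 ,
\]
using $\nabla\mathcal{L}(u_t) = g_t$. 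Combining these gives, pointwise in $\omega$,
\[
    \mathcal{L}(u_{t+1}) \le \mathcal{L}(u_t) - s_t (g_t, P_t^n g_t) + \tfrac{L + C_{\mathrm{R}}}{2} s_t^2 \norm{P_t^n g_t}^2 + \beta_t .
\]

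\emph{Step 2 (take conditional expectation).} Next I would take $\mathbb{E}[\,\cdot \mid \mathcal{F}_t]$. Since $u_t$, $g_t = \nabla\mathcal{L}(u_t)$, $s_t$ and $\beta_t$ are $\mathcal{F}_t$-measurable, they pull out of the conditional expectation; the only randomness left is in $P_t^n g_t$. I would then invoke \eqref{eq:bbv} with the choice $g = g_t$: the first inequality there bounds $\mathbb{E}[(g_t, P_t^n g_t)\mid\mathcal{F}_t]$ from below by $c_{\mathrm{bias},1}\norm{P_t g_t}^2 - c_{\mathrm{bias},2}\norm{P_t g_t}\norm{(I-P_t)g_t}$ (and this enters with a minus sign, so the lower bound is exactly what we need), while the second bounds $\mathbb{E}[\norm{P_t^n g_t}^2\mid\mathcal{F}_t]$ from above by $c_{\mathrm{var},1}\norm{P_t g_t}^2 + c_{\mathrm{var},2}\norm{(I-P_t)g_t}^2$. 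Substituting both and grouping the $\norm{P_t g_t}^2$ terms (coefficient $-c_{\mathrm{bias},1}s_t + s_t^2\tfrac{L+C_{\mathrm{R}}}{2}c_{\mathrm{var},1}$), the cross term, and the $\norm{(I-P_t)g_t}^2$ term yields exactly \eqref{eq:descent_bound}.

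\emph{The main obstacle} is essentially bookkeeping rather than a genuine difficulty: one must be careful that the two halves of \eqref{eq:bbv} are applied with the sign of each term in mind (the bias bound is used as a lower bound because it appears negated, the variance bound as an upper bound), and that the measurability hypotheses (i)–(v) from the setting are invoked to justify pulling the $\mathcal{F}_t$-measurable quantities out of $\mathbb{E}[\,\cdot\mid\mathcal{F}_t]$. One should also note that \eqref{eq:C-retraction} as written has a typo-like feature — it reads $\mathcal{L}(u+g)$ on the right where $u = u_t$ is meant — but with that reading the chaining goes through cleanly. No compactness or Lipschitz-continuity assumption is needed here; the restricted smoothness on $\mathcal{T}_t$ does all the work.
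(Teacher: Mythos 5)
Your proposal is correct and follows essentially the same route as the paper's proof: apply \eqref{eq:C-retraction} to pass from $u_{t+1}$ to $\bar u_{t+1}$, apply \eqref{eq:L-smooth} with the increment $-s_tP_t^ng_t\in\mcal{T}_t$, then take conditional expectation and insert the two halves of \eqref{eq:bbv} with the signs handled exactly as you describe. The only cosmetic difference is that you chain the deterministic inequalities pointwise before taking $\mbb{E}[\,\cdot\mid\mcal{F}_t]$, whereas the paper takes the conditional expectation at each stage; the two are interchangeable here.
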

\begin{proof}
    Recall that $u_{t+1} = R_{u_t}(\bar{u}_{t+1})$ and $\bar{u}_{t+1} = u_t - s_tP_{t}^n g_t$ with $g_t = \nabla\mcal{L}(u_t)$.
    By assumption~\eqref{eq:C-retraction}, it holds that
    \begin{align}
        \mbb{E}\bracs*{\mcal{L}(u_{t+1}) \mid \mcal{F}_t}
        &= 
        \mbb{E}\bracs*{\mcal{L}(R_{t}(\bar{u}_{t+1})) \mid \mcal{F}_t}
        \le \mbb{E}\bracs*{\mcal{L}(\bar{u}_{t+1}) \mid \mcal{F}_t} + \revision[0]{\tfrac{C_R}{2}s_t^2\mathbb{E}[\|P_t^n g_t\|^2\mid\mathcal{F}_t]}+ \beta_t .
        \label{eq:retracted_empirical_step}
    \end{align}
    To bound the first term, we conclude from assumption~\eqref{eq:L-smooth} that
    \begin{align}
    \label{eq:Lsmooth_empirical_step}
        \mbb{E}\bracs*{\mcal{L}\pars{\bar{u}_{t+1}}\mid\mcal{F}_t}
        &= \mbb{E}\bracs*{\mcal{L}\pars{u_t - s_t P^n_{t} g_t}\mid\mcal{F}_t} \\
        &\le \mcal{L}\pars{u_t} - s_t \mbb{E}\bracs*{\pars{g_t, P^n_{t} g_t}\mid \mcal{F}_t} + \revision[0]{s_t^2\tfrac{L}{2}\mbb{E}\bracs*{\norm{P^n_{t} g_t}^2\mid\mcal{F}_t} .}
        \label{eq:L-smooth_expectation}
    \end{align}
    Combining equations~\eqref{eq:retracted_empirical_step} and~\eqref{eq:L-smooth_expectation} and employing assumption~\eqref{eq:bbv} yields the desired estimate.
\end{proof}

    Theorem~\ref{thm:descent_quasi_projection} illustrates the advantage of an unbiased projection over biased projections.
    To guarantee descent, the expression
    \begin{align}
        &- \pars{c_{\mathrm{bias},1}s_t - s_t^2 \tfrac{L+C_{\mathrm{R}}}{2}c_{\mathrm{var},1}} \norm{P_tg_t}^2
        + s_t c_{\mathrm{bias},2} \norm{P_tg_t}\norm{(I-P_t)g_t}
        + s_t^2 \tfrac{L+C_{\mathrm{R}}}{2} c_{\mathrm{var},2}\norm{(I-P_t)g_t}^2 + \beta_t \\
        =
        &- s_t \pars*{c_{\mathrm{bias},1}\norm{P_tg_t}^2
        - c_{\mathrm{bias},2} \norm{P_tg_t}\norm{(I-P_t)g_t}}
        + s_t^2 \tfrac{L+C_{\mathrm{R}}}{2} \pars*{c_{\mathrm{var},1} \norm{P_tg_t}^2 + c_{\mathrm{var},2}\norm{(I-P_t)g_t}^2}
        + \beta_t
    \end{align}
    needs to be negative.
    If $c_{\mathrm{bias},2}=0$ and $\beta_t = \beta_t(s_t)\in o(s_t)$, we can always find $s_t > 0$ such that this is the case.
    However, if $c_{\mathrm{bias},2} > 0$, then there may exist a step $t$ such that $\frac{c_{\mathrm{bias},2}}{c_{\mathrm{bias},1}} \ge \frac{\norm{P_tg_t}}{\norm{(I-P_t)g_t}}$.
    In particular, this is the case when $u_t$ is close to a stationary point in $\mcal{M}$ which is not stationary in $\mathcal{H}$.
    In this situation, there exists no step size $s_t > 0$ for which we can guarantee descent.

    A special situation occurs for the least squares risk $\mathcal{L}(v) := \tfrac12\norm{u - v}^2$ and when $u_t\in\mathcal{T}_t$.
    In this case, we can rewrite the fraction
    $$
        \frac{\norm{P_tg_t}^2}{\norm{(I-P_t)g_t}^2}
        = \frac{\norm{u_t - P_tu}^2}{\norm{u - P_t u}^2} .
    $$
    Hence, $\frac{c_{\mathrm{bias},2}}{c_{\mathrm{bias},1}} \ge \frac{\norm{P_tg_t}}{\norm{(I-P_t)g_t}}$ \revision[0]{implies that}
    \begin{equation}
    \label{eq:least_squares_convergence_to_bias}
        \norm{u - u_t}^2
        = \norm{u - P_tu}^2 + \norm{u_t - P_t u}^2
        \le
        (1 + \tfrac{c_{\mathrm{bias},2}}{c_{\mathrm{bias},1}})\norm{u - P_tu}^2 ,
    \end{equation}
    i.e., that the current iterate $u_t$ is already a quasi-best approximation of $u$ in $\mathcal{T}_t$ with constant $(1+\frac{c_{\mathrm{bias},2}}{c_{\mathrm{bias},1}})$.
    If~\eqref{eq:least_squares_convergence_to_bias} is satisfied, we can not find a step size that reduces the error further.
    Moreover, since convergence to a stationary point necessitates $P_t g_t$ vanishing, the condition $\frac{c_{\mathrm{bias},2}}{c_{\mathrm{bias},1}} \ge \frac{\norm{P_tg_t}}{\norm{(I-P_t)g_t}}$ must be satisfied eventually in the non-recovery setting $u\not\in\mcal{M}$.
    Convergence to a stationary point is thus impossible.

    Note, however, that this quasi-optimality is already a sufficient error control in most practical applications, and convergence to a stationary point may not be required. 
    In this spirit, one could consider a relaxed definition of a stationary point, where the norm of the projected gradient is not zero but bounded by the norm of the orthogonal complement.

 In the following, we first provide an analysis of convergence to stationary points in the unbiased case ($c_{\mathrm{bias},2} = 0$), and then consider the biased case, where we prove almost sure convergence to a notion of quasi-critical point.

\subsection{Convergence in expectation in the unbiased case}
\label{sec:Convergence_expectation}
{\color{red} }
We here assume $c_{\mathrm{bias},2} = 0$, 
which is satisfied for the operators from Sections~\ref{sec:non-projection},~\ref{sec:quasi-projection} and~\ref{sec:dpp-projection}.

\begin{theorem}
\label{thm:convergence_in_expectation}
    Assume that the loss function satisfies assumptions~\eqref{eq:L-smooth} and~\eqref{eq:mu-PL_strong} with constants $L,\PL > 0$ and~\eqref{eq:C-retraction} holds with $C_{\mathrm{R}}>0$ and a sequence $\beta\in\ell^1$.
    Moreover, assume that the projection estimate satisfies assumption~\eqref{eq:bbv} with $c_{\mathrm{bias},2}=0$.
    Define the descent factor $\sigma_t$ and contraction factor $a_t$ by
    $$
        \sigma_t := c_{\mathrm{bias},1} s_t - s_t^2 \tfrac{L+C_{\mathrm{R}}}{2} c_{\mathrm{var},1}
        \qquad\text{and}\qquad
        a_t := 1 - 2\PL\sigma_t
    $$
    and assume that $s_t$ is $\mcal{F}_1$-measurable and chosen such that $a_t\in\pars{0,1}$ and
    $$
        s_t^2 \tfrac{L+C_{\mathrm{R}}}{2}c_{\mathrm{var},2}\norm{(I-P_t)g_t}^2 + \beta_t
        \le \xi_t
    $$
    for some $\mcal{F}_1$-measurable sequence $\xi_t$.
    Then, for $T\in\mathbb{N}$ it holds
    $$
        \mbb{E}\bracs*{\mcal{L}\pars{u_{T+1}} - \mcal{L}_{\mathrm{min},\mcal{M}} \mid \mcal{F}_1} \le \pars*{\prod_{t=1}^T a_t} \mbb{E}\bracs*{\mcal{L}\pars{u_{1}} - \mcal{L}_{\mathrm{min},\mcal{M}} \mid \mcal{F}_1} + \bar\xi_T,
    $$
    where $\bar\xi_0 := 0$ and $\bar\xi_{t+1} := \xi_{t+1} + a_{t+1} \bar\xi_t$.
\end{theorem}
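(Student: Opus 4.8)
The plan is to convert the one-step descent bound of Theorem~\ref{thm:descent_quasi_projection} into a scalar recursion for the conditional expected suboptimality $e_t := \mbb{E}\bracs*{\mcal{L}(u_t) - \mcal{L}_{\mathrm{min},\mcal{M}} \mid \mcal{F}_1}$, and then to unroll that recursion by induction on $T$.

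First I would invoke Theorem~\ref{thm:descent_quasi_projection} with $c_{\mathrm{bias},2}=0$, observing that the coefficient of $-\norm{P_tg_t}^2$ in~\eqref{eq:descent_bound} is precisely $\sigma_t = c_{\mathrm{bias},1}s_t - s_t^2\tfrac{L+C_{\mathrm{R}}}{2}c_{\mathrm{var},1}$. Subtracting $\mcal{L}_{\mathrm{min},\mcal{M}}$ on both sides and using the standing hypothesis $s_t^2\tfrac{L+C_{\mathrm{R}}}{2}c_{\mathrm{var},2}\norm{(I-P_t)g_t}^2 + \beta_t \le \xi_t$, this gives
\[
    \mbb{E}\bracs*{\mcal{L}(u_{t+1}) - \mcal{L}_{\mathrm{min},\mcal{M}} \mid \mcal{F}_t}
    \le \pars*{\mcal{L}(u_t) - \mcal{L}_{\mathrm{min},\mcal{M}}} - \sigma_t\norm{P_t g_t}^2 + \xi_t .
\]
Next I would note that the hypothesis $a_t = 1-2\PL\sigma_t\in\pars{0,1}$ forces $\sigma_t\in\pars{0,\tfrac{1}{2\PL}}$; in particular $\sigma_t>0$ (so the descent term above really is a descent term), which is exactly what licenses bounding $-\sigma_t\norm{P_tg_t}^2$ from above via~\eqref{eq:mu-PL_strong} applied at $u=u_t$ (recall $g_t=\nabla\mcal{L}(u_t)$ and that $\mcal{T}_t,P_t$ are the objects the algorithm associates to $u_t$). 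Since then $\norm{P_tg_t}^2\ge 2\PL\pars*{\mcal{L}(u_t)-\mcal{L}_{\mathrm{min},\mcal{M}}}$, we obtain
\[
    \mbb{E}\bracs*{\mcal{L}(u_{t+1}) - \mcal{L}_{\mathrm{min},\mcal{M}} \mid \mcal{F}_t}
    \le (1-2\PL\sigma_t)\pars*{\mcal{L}(u_t) - \mcal{L}_{\mathrm{min},\mcal{M}}} + \xi_t
    = a_t\pars*{\mcal{L}(u_t) - \mcal{L}_{\mathrm{min},\mcal{M}}} + \xi_t .
\]

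Then I would apply $\mbb{E}\bracs*{\,\cdot\mid\mcal{F}_1}$ to this inequality. Because $\mcal{F}_1\subseteq\mcal{F}_t$, the tower property turns the left-hand side into $e_{t+1}$, and since $s_t$ — and hence $a_t$ and $\xi_t$ — is $\mcal{F}_1$-measurable, both $a_t$ and $\xi_t$ may be pulled out of the conditional expectation, leaving the scalar recursion $e_{t+1}\le a_t e_t + \xi_t$ for $t\ge 1$. Finally I would prove the asserted bound by induction on $T$: the base case $T=0$ is the tautology $e_1\le e_1$ (empty product, $\bar\xi_0=0$), and in the inductive step I would use $a_{T+1}>0$ together with the definition $\bar\xi_{T+1}=\xi_{T+1}+a_{T+1}\bar\xi_T$ to pass from $e_{T+1}\le\pars*{\prod_{t=1}^{T}a_t}e_1+\bar\xi_T$ to $e_{T+2}\le a_{T+1}e_{T+1}+\xi_{T+1}\le\pars*{\prod_{t=1}^{T+1}a_t}e_1+\bar\xi_{T+1}$.

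Since everything past the recursion $e_{t+1}\le a_te_t+\xi_t$ is a deterministic geometric-series estimate, I do not expect a genuine analytic obstacle. The two points that need care are (i) checking that $a_t\in\pars{0,1}$ does yield $\sigma_t>0$, which is what makes both the descent coefficient in Theorem~\ref{thm:descent_quasi_projection} nonnegative and the application of~\eqref{eq:mu-PL_strong} sign-correct, and (ii) the measurability bookkeeping — using that $s_t$ is $\mcal{F}_1$-measurable — that allows $a_t$ and $\xi_t$ to be taken outside $\mbb{E}\bracs*{\,\cdot\mid\mcal{F}_1}$ when passing from the $\mcal{F}_t$-conditional inequality to the recursion for $e_t$.
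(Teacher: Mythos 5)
Your proposal is correct and follows essentially the same route as the paper: both derive the one-step recursion $\mbb{E}\bracs{X_{t+1}\mid\mcal{F}_t}\le a_tX_t+\xi_t$ from Theorem~\ref{thm:descent_quasi_projection} together with~\eqref{eq:mu-PL_strong}, and both exploit the $\mcal{F}_1$-measurability of $s_t$ (hence of $a_t$ and $\xi_t$) to solve it. The only difference is cosmetic — you unroll the recursion by induction after conditioning on $\mcal{F}_1$, whereas the paper telescopes the normalised quantities $Y_t=A_t^{-1}X_t$ — and your explicit check that $a_t\in(0,1)$ forces $\sigma_t>0$ (making the application of~\eqref{eq:mu-PL_strong} sign-correct) is a point the paper leaves implicit.
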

\begin{proof}
    Define $X_t := \mcal{L}\pars{u_t} - \mcal{L}_{\mathrm{min},\mcal{M}}$.
    Then, Theorem~\ref{thm:descent_quasi_projection} and~\eqref{eq:mu-PL_strong} provide the relation
    \begin{align*}
        \mbb{E}\bracs*{X_{t+1} \mid \mcal{F}_t}
        &\le a_t X_t + \xi_t .
    \end{align*}
    To solve this recurrence relation, we define $A_{t+1} := \prod\limits_{k=1}^{t}a_k$ with $A_1:=1$.
    Since $s_t$ is $\mathcal{F}_1$ measurable, $A_t^{-1}$ is $\mathcal{F}_t$ measurable for all $t\ge 1$ and it follows that
    $$
        A_{t+1}^{-1} \pars*{\mathbb{E}\bracs*{X_{t+1}\mid\mathcal{F}_t} - a_tX_t}
        = \mathbb{E}\bracs*{A_{t+1}^{-1} X_{t+1} - A_t^{-1}X_t\mid\mathcal{F}_t}
        \le A_{t+1}^{-1}\xi_t .
    $$
    Defining $Y_t := A_t^{-1}X_t$, we can thus write $\mbb{E}\bracs*{Y_{t+1} - Y_t\mid \mcal{F}_t} \le A_{t+1}^{-1} \xi_t$ and summing over all $t = 1, \ldots, T$ yields
    \begin{align}
        \mbb{E}\bracs*{Y_{T+1} - Y_1 \mid \mathcal{F}_1}
        &= \sum_{t=1}^{T} \mbb{E}\bracs*{Y_{t+1} - Y_t \mid \mathcal{F}_1}
        = \sum_{t=1}^{T} \mbb{E}\bracs*{\mbb{E}\bracs*{Y_{t+1} - Y_t \mid \mcal{F}_{t}}\mid \mathcal{F}_1} \\
        &\le \sum_{t=1}^{T} \mbb{E}[A_{t+1}^{-1}\xi_t \mid \mathcal{F}_1]
        = \sum_{t=1}^{T} A_{t+1}^{-1}\xi_t .
    \end{align}
    Resubstituting $Y_t = A_t^{-1}X_t$ and using $A_1 = 1$, we finally obtain the claim
    \begin{align}
        \mbb{E}\bracs*{X_{T+1} \mid \mathcal{F}_1}
        &= A_{T+1}\mbb{E}\bracs*{A_{T+1}^{-1}X_{T+1} \mid \mathcal{F}_1} \\
        &\le A_{T+1} \mbb{E}\bracs{X_1 \mid \mathcal{F}_1} + \sum_{t=1}^{T} A_{T+1} A_{t+1}^{-1} \xi_t \\
        &= A_{T+1} \mbb{E}\bracs{X_1 \mid \mathcal{F}_1}  + \bar\xi_T.
    \end{align}
\end{proof}

\begin{corollary}
    Theorem~\ref{thm:convergence_in_expectation} implies descent with high probability via Markov's inequality.
\end{corollary}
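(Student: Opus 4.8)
The plan is to combine the moment bound of Theorem~\ref{thm:convergence_in_expectation} with the nonnegativity of the error and a conditional version of Markov's inequality. First I would record the only structural ingredient not already contained in Theorem~\ref{thm:convergence_in_expectation}: since every iterate satisfies $u_{t}=R_{t-1}(\bar u_{t})\in\mcal{M}$ by construction, and since $\mcal{L}_{\mathrm{min},\mcal{M}}=\inf_{u\in\mcal{M}}\mcal{L}(u)$ is finite under~\eqref{eq:lower_bound}, the random variable $X_{T+1}:=\mcal{L}(u_{T+1})-\mcal{L}_{\mathrm{min},\mcal{M}}$ is nonnegative almost surely.

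Next I would apply the conditional Markov inequality: for the nonnegative random variable $X_{T+1}$ and any threshold $\varepsilon>0$,
$$
    \mbb{P}\pars*{X_{T+1}\ge\varepsilon\mid\mcal{F}_1}\le\frac{\mbb{E}\bracs*{X_{T+1}\mid\mcal{F}_1}}{\varepsilon}
$$
holds almost surely. Substituting the bound from Theorem~\ref{thm:convergence_in_expectation} then yields
$$
    \mbb{P}\pars*{\mcal{L}(u_{T+1})-\mcal{L}_{\mathrm{min},\mcal{M}}\ge\varepsilon\mid\mcal{F}_1}\le\frac{1}{\varepsilon}\pars*{\pars*{\prod_{t=1}^{T}a_t}\mbb{E}\bracs*{\mcal{L}(u_1)-\mcal{L}_{\mathrm{min},\mcal{M}}\mid\mcal{F}_1}+\bar\xi_T},
$$
and taking total expectation gives the unconditional analogue with $\mbb{E}[\mcal{L}(u_1)-\mcal{L}_{\mathrm{min},\mcal{M}}]$ in place of the conditional expectation.

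Finally I would interpret the right-hand side. Since $a_t\in(0,1)$, the product $\prod_{t=1}^{T}a_t$ is non-increasing and tends to zero as soon as $\sum_t\sigma_t=\infty$, while $\bar\xi_T$ stays controlled under the standing summability hypotheses on $\xi$ and $\beta$; hence for fixed $\varepsilon$ the failure probability becomes arbitrarily small as $T$ grows, and, read in the other direction, one may let the threshold $\varepsilon=\varepsilon_T$ shrink strictly more slowly than the bound on $\mbb{E}[X_{T+1}\mid\mcal{F}_1]$ and still retain high probability — i.e.\ with high probability the loss at step $T+1$ is within $\varepsilon_T$ of the optimal value over $\mcal{M}$. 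I do not expect a genuine obstacle: the only points requiring care are keeping the conditioning on $\mcal{F}_1$ consistent between the Markov step and the input bound, and stating the nonnegativity of $X_{T+1}$ explicitly, since Theorem~\ref{thm:convergence_in_expectation} invokes~\eqref{eq:lower_bound} only implicitly through the definition of $\mcal{L}_{\mathrm{min},\mcal{M}}$.
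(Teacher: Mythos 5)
Your proposal is correct and follows exactly the route the paper intends: the corollary is stated without proof precisely because it is the immediate application of (conditional) Markov's inequality to the nonnegative random variable $\mcal{L}(u_{T+1})-\mcal{L}_{\mathrm{min},\mcal{M}}$, using the expectation bound of Theorem~\ref{thm:convergence_in_expectation}. Your additional care about nonnegativity of the loss gap and consistency of the conditioning on $\mcal{F}_1$ is appropriate but does not change the argument.
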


Assuming a non-vanishing orthogonal complement $0 < c \le \norm{(I - P_t)g_t}$, Theorem~\ref{thm:convergence_in_expectation} can only guarantee convergence up to the bias term $\bar\xi_t$.
We investigate the implications of this result in the case of algebraic and exponential decay of $\xi_t$, respectively, in the subsequent corollaries.

\begin{corollary}[Sub-linear convergence in expectation]
\label{corollar:sublinear_inexpectation}
    Assume that the loss function satisfies assumptions~\eqref{eq:L-smooth} and~\eqref{eq:mu-PL_strong} with constants $L,\PL > 0$ and~\eqref{eq:C-retraction} holds with $C_{\mathrm{R}}>0$ and a sequence $\beta_t\in\mcal{O}(t^{-s})$ for some $s>1$.
    Moreover, suppose that the projection estimate satisfies assumption~\eqref{eq:bbv} with $c_{\mathrm{bias},2}=0$.
    Assume that $\norm{(I-P_t)g_t} \le C$ \revision[0]{for some constant $C > 0$} and
    $$
        s_t := c_0 t^{-1}
    $$
    with $p=\min\{2,s\}$ and $c_0 \ge \tfrac{p-1}{2\PL c_{\mathrm{bias},1}}$.
    Then, for any $0<\varepsilon < p$ we obtain a convergence rate of
    $$
        \mbb{E}\bracs*{\mcal{L}(u_{t+1}) - \mcal{L}_{\mathrm{min},\mcal{M}}}
        \in \mathcal{O}\pars{t^{\varepsilon + 1 - p}} .
    $$
\end{corollary}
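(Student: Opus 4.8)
The plan is to apply Theorem~\ref{thm:convergence_in_expectation} with the specific choices $s_t := c_0 t^{-1}$, and to identify the resulting sequences $\xi_t$ and $\bar\xi_T$ and estimate their decay. First I would observe that, since $\beta_t \in \mathcal{O}(t^{-s})$ and $\norm{(I-P_t)g_t} \le C$, the quantity
$$
    \xi_t := s_t^2 \tfrac{L+C_{\mathrm{R}}}{2}c_{\mathrm{var},2} C^2 + \beta_t
$$
satisfies $\xi_t \in \mathcal{O}(t^{-p})$ with $p = \min\{2,s\}$, because $s_t^2 = c_0^2 t^{-2}$ contributes an $\mathcal{O}(t^{-2})$ term. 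This $\xi_t$ is $\mathcal{F}_1$-measurable (indeed deterministic up to the constant bounding the orthogonal complement), so Theorem~\ref{thm:convergence_in_expectation} applies once we check $a_t\in(0,1)$. The contraction factor is $a_t = 1 - 2\PL\sigma_t$ with $\sigma_t = c_{\mathrm{bias},1}s_t - s_t^2\tfrac{L+C_{\mathrm{R}}}{2}c_{\mathrm{var},1}$; for $t$ large enough $\sigma_t \approx c_{\mathrm{bias},1}c_0 t^{-1} > 0$ is small, so $a_t \in (0,1)$ eventually, and one can absorb the finitely many initial steps into the $\mathcal{O}$-constant (or assume the scheme is indexed so this holds from the start).

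Next I would estimate the two pieces of the bound. The product $\prod_{t=1}^T a_t = \prod_{t=1}^T(1 - 2\PL\sigma_t)$: since $\sigma_t \ge c_{\mathrm{bias},1}c_0 t^{-1} - \mathcal{O}(t^{-2})$, we get $\sum_{t=1}^T 2\PL\sigma_t \ge 2\PL c_{\mathrm{bias},1}c_0 \ln T - \mathcal{O}(1)$, hence using $1-x \le e^{-x}$,
$$
    \prod_{t=1}^T a_t \le \exp\pars*{-\textstyle\sum_{t=1}^T 2\PL\sigma_t} \lesssim T^{-2\PL c_{\mathrm{bias},1}c_0}.
$$
With the hypothesis $c_0 \ge \tfrac{p-1}{2\PL c_{\mathrm{bias},1}}$ this exponent is at least $p-1$, so this term is $\mathcal{O}(T^{-(p-1)}) = \mathcal{O}(T^{1-p})$, which is dominated by the target rate $\mathcal{O}(t^{\varepsilon+1-p})$.

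The main work is bounding $\bar\xi_T$, defined by $\bar\xi_0 = 0$, $\bar\xi_{t+1} = \xi_{t+1} + a_{t+1}\bar\xi_t$, which unrolls to $\bar\xi_T = \sum_{t=1}^T \pars{\prod_{k=t+1}^T a_k} \xi_t$. Writing $A_{T+1} = \prod_{k=1}^T a_k$ and using $\prod_{k=t+1}^T a_k = A_{T+1}/A_{t+1}$ with $A_{t+1}^{-1} \lesssim t^{2\PL c_{\mathrm{bias},1}c_0}$ (same estimate as above applied to the partial product), we have
$$
    \bar\xi_T \lesssim T^{-2\PL c_{\mathrm{bias},1}c_0} \sum_{t=1}^T t^{2\PL c_{\mathrm{bias},1}c_0} t^{-p}.
$$
Set $q := 2\PL c_{\mathrm{bias},1}c_0 \ge p-1$. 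The sum $\sum_{t=1}^T t^{q-p}$ behaves like $T^{q-p+1}$ when $q-p+1 > 0$ (i.e. $q > p-1$), like $\ln T$ when $q = p-1$, and is $\mathcal{O}(1)$ when $q < p-1$; in all cases $\sum_{t=1}^T t^{q-p} \lesssim T^{\max\{q-p+1,\,\varepsilon\}}$ for any fixed $\varepsilon>0$ (the $\varepsilon$ covers the logarithmic borderline case). Multiplying by $T^{-q}$ gives $\bar\xi_T \lesssim T^{\max\{1-p,\,\varepsilon-q\}} \le T^{\varepsilon+1-p}$, using $q \ge p-1$ so that $\varepsilon - q \le \varepsilon - p + 1$. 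Combining this with the product-term estimate yields
$$
    \mbb{E}\bracs*{\mcal{L}(u_{T+1}) - \mcal{L}_{\mathrm{min},\mcal{M}}} \lesssim T^{1-p} + T^{\varepsilon+1-p} \in \mathcal{O}(T^{\varepsilon+1-p}),
$$
which is the claim after relabelling $T$ as $t$. The delicate point, and the step I expect to require the most care, is the uniform handling of the geometric-type sum $\sum_t t^{q-p}$ across the three regimes $q \gtrless p-1$, and in particular justifying the logarithmic borderline case by the arbitrarily small $\varepsilon$; everything else is bookkeeping with the $1-x\le e^{-x}$ inequality and the fact that finitely many early terms (where $a_t$ may fail to lie in $(0,1)$ or where asymptotics have not kicked in) only affect multiplicative constants.
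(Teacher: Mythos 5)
Your proof is correct and follows essentially the same route as the paper's: identify $\xi_t \in \mcal{O}(t^{-p})$, invoke Theorem~\ref{thm:convergence_in_expectation}, show the product term decays like $T^{-c_1}$ with $c_1 = 2\PL c_{\mathrm{bias},1}c_0 \ge p-1$, and bound $\bar\xi_T$ by the weighted sum. The technical differences are cosmetic but worth noting: the paper obtains the two-sided bound $T^{-c_1}\lesssim \prod_{t\le T} a_t\lesssim T^{\varepsilon-c_1}$ from the Gamma-function identity for $\prod(1-ct^{-1})$ and then compares $\sum (t-1)^{c_1}t^{-p}$ with an integral, whereas you use $1-x\le e^{-x}$ for the upper bound and a three-regime case analysis of $\sum t^{q-p}$; your explicit treatment of the borderline case $q=p-1$ via the logarithm absorbed into $T^{\varepsilon}$ is actually cleaner than the paper's, whose closed-form $\frac{T^{\varepsilon-p+1}-(T-1)^{\varepsilon-c_1}}{c_1-p+1}$ degenerates exactly when $c_1=p-1$. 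One small slip to repair: to get $A_{t+1}^{-1}\lesssim t^{q}$ you need a \emph{lower} bound $\prod_{k\le t}a_k\gtrsim t^{-q}$, which is not the ``same estimate as above'' --- the inequality $1-x\le e^{-x}$ only gives the upper bound on the product. The lower bound is nonetheless true and one line away (e.g.\ $\ln(1-x)\ge -x-x^2$ for $x\le\tfrac12$ applied to $a_k\ge 1-c_1k^{-1}$, or the paper's Gamma-function asymptotics), so this is a missing justification rather than a gap in the argument. Your unrolling $\bar\xi_T=\sum_{t=1}^T\pars{\prod_{k=t+1}^{T}a_k}\xi_t$ and the handling of the finitely many initial steps where $a_t\notin(0,1)$ are both fine and consistent with how the theorem's proof actually produces the bound.
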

\begin{proof}
    By the choice of $s_t$, it holds that
    $$
        s_t^2 \tfrac{L+C_{\mathrm{R}}}{2}c_{\mathrm{var},2}\norm{(I-P_t)g_t}^2 + \beta_t
        \le c_0^2 \tfrac{L+C_{\mathrm{R}}}{2} c_{\mathrm{var},2} C^2 t^{-2} + \beta_t =: \xi_t ,
    $$
    with $\xi_t\in\mcal{O}(t^{-p})$ and $p = \min\braces{2, s}$.
    Applying Theorem~\ref{thm:convergence_in_expectation} yields the bound
    $$
        \mbb{E}\bracs*{\mcal{L}\pars{u_{T+1}} - \mcal{L}_{\mathrm{min},\mcal{M}} \mid \mcal{F}_1} \le \pars*{\prod_{t=1}^T a_t} \mbb{E}\bracs*{\mcal{L}\pars{u_{1}} - \mcal{L}_{\mathrm{min},\mcal{M}} \mid \mcal{F}_1} + \bar\xi_T,
    $$
    with $\bar\xi_0 := 0$ and $\bar\xi_{t+1} := \xi_{t+1} + a_{t+1} \bar\xi_t$
    and a contraction factor 
    $$
        a_t
        = 1 - 2\PL c_{\mathrm{bias},1} c_0 t^{-1} + 2\PL \tfrac{L+C_{\mathrm{R}}}{2} c_{\mathrm{var},1} c_0^2 t^{-2}
        =: 1 - c_1t^{-1} + c_2t^{-2}.
    $$
    Let $\varepsilon$ be fixed and observe that the preceding equation implies that there exists $t_0 > 0$ such that for all $t > t_0$
    \begin{equation}
        \label{eq:a_t_bounds}
        1 - c_1 t^{-1}
        \le a_t
        \le 1 - (c_1-\varepsilon) t^{-1} .
    \end{equation}
    Since for any $c>0$, redefining $t_0 := \max\braces{t_0, \floor{c}+1}$ it holds that
    $$
        \lim_{T\to\infty} \left(\prod\limits_{t=t_0}^T 1 - c t^{-1}\right) T^{c}
        = \frac{\Gamma(t_0)}{\Gamma(t_0 - c)} \in (0, \infty) ,
    $$
    we conclude from \eqref{eq:a_t_bounds} that $T^{-c_1} \lesssim \prod_{t=1}^T a_t \lesssim T^{\varepsilon-c_1}$.
    This implies
    $$
        \bar\xi_T
        = \sum_{t=1}^T \pars*{\prod_{k=t}^{T-1} a_k} \xi_t
        = \sum_{t=1}^T \frac{\prod_{k=1}^{T-1} a_k}{\prod_{k=1}^{t-1} a_k} \xi_t
        \lesssim (T-1)^{\varepsilon-c_1} \sum_{t=1}^T \pars*{t-1}^{c_1} \xi_t .
    $$
    Since $\xi_t\in\mcal{O}(t^{-p})$, we conclude 
    \begin{align}
        \bar\xi_T
        \lesssim (T-1)^{\varepsilon-c_1} \sum_{t=1}^T \pars*{t-1}^{c_1} t^{-p}
        \le (T-1)^{\varepsilon-c_1} \int_{1}^T t^{c_1-p} \dx[t]
        = \frac{T^{\varepsilon - p + 1} - (T-1)^{\varepsilon - c_1}}{c_1 - p + 1} .
    \end{align}
    The choice $c_0 \ge \tfrac{p-1}{2\PL c_{\mathrm{bias},1}}$ implies $c_1 = 2\PL c_{\mathrm{bias},1} c_0 \ge p-1$ and $c_1 - p + 1 \ge 0$.
    This yields $\prod_{t=1}^T a_t \in\mcal{O}(T^{\varepsilon - p + 1})$ and $\bar\xi_T \in\mcal{O}(T^{\varepsilon - p + 1})$.
\end{proof}

\begin{corollary}[Exponential convergence in expectation]
\label{corollary:exponential_conv_expectation}
    Assume that the loss function satisfies assumptions~\eqref{eq:L-smooth} and~\eqref{eq:mu-PL_strong} with constants $L,\PL > 0$ and~\eqref{eq:C-retraction} holds with $C_{\mathrm{R}}>0$ and a sequence $\beta_t\in\mcal{O}(b^{t})$ for some $b\in\pars{0,1}$.
    Moreover, suppose that the projection estimate satisfies assumption~\eqref{eq:bbv} with $c_{\mathrm{bias},2}=0$ and that $\norm{(I-P_t)g_t} = 0$.
    If the step size is chosen optimally as $s_t :\equiv \tfrac{c_{\mathrm{bias},1}}{(L+C_{\mathrm{R}}) c_{\mathrm{var},1}}$ and if $a := \pars{1 - 2 \tfrac{\lambda c_{\mathrm{bias},1}}{(L+C_{\mathrm{R}}) c_{\mathrm{var},1}}} \in\pars{0,1}$ we obtain the exponential convergence
    $$
        \mbb{E}\bracs*{\mcal{L}\pars{u_{T+1}} - \mcal{L}_{\mathrm{min},\mcal{M}} \mid \mcal{F}_1}
        \in 
        \begin{cases}
            \mathcal{O}\pars{a^T + b^T}, &a\neq b, \\
            \mathcal{O}\pars{Ta^T}, & a= b.
        \end{cases}
    $$
\end{corollary}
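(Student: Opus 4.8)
\textbf{Proof plan for Corollary~\ref{corollary:exponential_conv_expectation}.}
The plan is to derive this as a direct specialisation of Theorem~\ref{thm:convergence_in_expectation}. First I would verify that the chosen constant step size $s_t \equiv \tfrac{c_{\mathrm{bias},1}}{(L+C_{\mathrm{R}})c_{\mathrm{var},1}}$ maximises the descent factor $\sigma_t = c_{\mathrm{bias},1}s_t - s_t^2\tfrac{L+C_{\mathrm{R}}}{2}c_{\mathrm{var},1}$ over $s_t > 0$ (set the derivative to zero), yielding the optimal value $\sigma_t \equiv \tfrac{c_{\mathrm{bias},1}^2}{2(L+C_{\mathrm{R}})c_{\mathrm{var},1}}$ and hence the constant contraction factor $a_t \equiv a = 1 - 2\PL\sigma_t = 1 - 2\tfrac{\PL c_{\mathrm{bias},1}}{(L+C_{\mathrm{R}})c_{\mathrm{var},1}}$, which is assumed to lie in $(0,1)$. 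Since the step size is constant it is trivially $\mcal{F}_1$-measurable.

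Next I would identify the sequence $\xi_t$. Because $\norm{(I-P_t)g_t} = 0$ by assumption, the variance term $s_t^2\tfrac{L+C_{\mathrm{R}}}{2}c_{\mathrm{var},2}\norm{(I-P_t)g_t}^2$ vanishes, so we may take $\xi_t := \beta_t \in \mcal{O}(b^t)$, which is $\mcal{F}_1$-measurable (deterministic). Theorem~\ref{thm:convergence_in_expectation} then gives
\begin{equation*}
    \mbb{E}\bracs*{\mcal{L}(u_{T+1}) - \mcal{L}_{\mathrm{min},\mcal{M}} \mid \mcal{F}_1}
    \le a^T\, \mbb{E}\bracs*{\mcal{L}(u_1) - \mcal{L}_{\mathrm{min},\mcal{M}} \mid \mcal{F}_1} + \bar\xi_T ,
\end{equation*}
with $\bar\xi_0 = 0$ and $\bar\xi_{t+1} = \xi_{t+1} + a\,\bar\xi_t$, i.e.\ $\bar\xi_T = \sum_{t=1}^{T} a^{T-t}\xi_t$. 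The first term is $\mcal{O}(a^T)$, so everything reduces to bounding $\bar\xi_T = \sum_{t=1}^T a^{T-t}\xi_t$ with $\xi_t \lesssim b^t$.

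The remaining step is the elementary estimate of the convolution sum $\sum_{t=1}^T a^{T-t}b^t$. Writing this as $a^T\sum_{t=1}^T (b/a)^t$: if $a \neq b$ the geometric sum is bounded by a constant times $\max\{1, (b/a)^T\}$, giving $a^T\sum_{t=1}^T (b/a)^t \lesssim \max\{a^T, b^T\} \le a^T + b^T$; if $a = b$ the sum equals $T a^T$. This yields exactly the two cases in the statement. There is no real obstacle here — the only mild care needed is to absorb the implicit constants from $\beta_t\in\mcal{O}(b^t)$ and from the $a\neq b$ geometric series (the factor $\tfrac{1}{\abs{1-b/a}}$) into the $\mcal{O}$-notation, and to note that when $b < a$ the $b^T$ term is dominated by $a^T$ anyway, while when $b > a$ it dominates, so the bound $\mcal{O}(a^T+b^T)$ is sharp in both sub-cases.
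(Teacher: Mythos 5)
Your proposal is correct and follows essentially the same route as the paper's own proof: specialise Theorem~\ref{thm:convergence_in_expectation} with $\xi_t=\beta_t$ (the variance term vanishing since $\norm{(I-P_t)g_t}=0$), observe that the optimal constant step size maximises $\sigma_t$ and yields a constant contraction factor, and then bound the convolution $\bar\xi_T=\sum_{t=1}^T a^{T-t}b^t$ by distinguishing $a\neq b$ from $a=b$. The only cosmetic remark is that $2\PL\sigma_t$ with the optimal $\sigma_t=\tfrac{c_{\mathrm{bias},1}^2}{2(L+C_{\mathrm{R}})c_{\mathrm{var},1}}$ gives $a=1-\tfrac{\PL c_{\mathrm{bias},1}^2}{(L+C_{\mathrm{R}})c_{\mathrm{var},1}}$ rather than the expression printed in the statement, but this discrepancy originates in the corollary's statement itself and does not affect your argument.
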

\begin{proof}
    By assumption, we have
    $$
        s_t^2 \tfrac{L+C_{\mathrm{R}}}{2}c_{\mathrm{var},2}\norm{(I-P_t)g_t}^2 + \beta_t
        = \beta_t
        \lesssim b^{t}.
    $$
    Taking $\xi_t=\beta_t$ and applying Theorem~\ref{thm:convergence_in_expectation} yields the bound
    \begin{equation}
    \label{eq:proof_bound_01}
        \mbb{E}\bracs*{\mcal{L}\pars{u_{T+1}} - \mcal{L}_{\mathrm{min},\mcal{M}} \mid \mcal{F}_1} \le \pars*{\prod_{t=1}^T a_t} \mbb{E}\bracs*{\mcal{L}\pars{u_{1}} - \mcal{L}_{\mathrm{min},\mcal{M}} \mid \mcal{F}_1} + \bar\xi_T
    \end{equation}
    with $\bar\xi_0 := 0$ and $\bar\xi_{t+1} := \xi_{t+1} + a_{t+1} \bar\xi_t$.
    The factor $a_t = 1 - 2\PL\sigma_t$ is minimal when $\sigma_t>0$ is maximal and $\sigma_t = c_{\mathrm{bias},1}s_t - s_t^2 \tfrac{L+C_{\mathrm{R}}}{2} c_{\mathrm{var},1}$ is maximised for $s_t = \tfrac{c_{\mathrm{bias},1}}{(L+C_{\mathrm{R}}) c_{\mathrm{var},1}}$.
    Since $a \equiv a_t$ is constant, $\prod_{t=1}^T a_t = a^T$. 
    If $a\neq b$, then 
    $$
        \bar\xi_T
        = \sum_{t=1}^T a^{T - t} \xi_t
        \lesssim \sum_{t=1}^T a^{T - t} b^{t}
        = a^T \sum_{t=1}^T (b/a)^{t}
        = \frac{1}{a/b-1} (a^T - b^T)
        $$
    else 
    $$
     \bar\xi_T
        = \sum_{t=1}^T a^{T - t} \xi_t
        \lesssim \sum_{t=1}^T a^{T - t} b^{t}
        = a^T \sum_{t=1}^T (b/a)^{t}
        \leq T a^T.
    $$
    Using these upper bounds to estimate \eqref{eq:proof_bound_01} yields the result.
\end{proof}

Before concluding this section, we present an interesting corollary of Theorem~\ref{thm:convergence_in_expectation} in the situation of a linear model class $\mcal{M}$ and an approximative projection given by the quasi-projection from section~\ref{sec:quasi-projection}.

\begin{corollary}
\label{cor:convergence_up_to_precision}
    Consider the least squares loss $\mathcal{L}(v) := \tfrac{1}{2} \norm{u - v}^2$ for $u\in\mathcal{H}$ on a $d$-dimensional linear model class $\mcal{M}$ and $\mcal{T}_t = \mcal{M}$ with $R_t \equiv I$, the identity operator.
    Moreover, suppose that $P_t$ satisfies the bias and variance bounds of the quasi-projection operator from Lemma~\ref{lem:sgd_bias_variance}.
    Suppose that $u_1 = 0$, $n = c(d-1)\in\mathbb{N}$ for some constant $c \ge \tfrac1{d-1}$ and that the step size is chosen optimally.
    Then, for every $\varepsilon>0$, the perturbed relative error bound
    $$
        \mbb{E}\bracs*{\frac{\norm{u-u_t}^2}{\norm{u}^2}}
        \le \pars{1 + \tfrac{1}{1+c}\tfrac{d}{d-1}}\frac{\norm{u - P_tu}^2}{\norm{u}^2} + \varepsilon
    $$
    is satisfied after $t = \tfrac{\ln(2\varepsilon^{-1})}{\ln(1+c)}$ iterations.
\end{corollary}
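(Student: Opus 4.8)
The plan is to make every quantity explicit in this concrete setting and then invoke Theorem~\ref{thm:convergence_in_expectation}. Write $P$ for the $\mcal{H}$-orthogonal projection onto the $d$-dimensional space $\mcal{M}$, so that $\mcal{T}_t=\mcal{M}$ and $P_t=P$ for every $t$, and set $e:=\norm{u-Pu}$. For any iterate $u_t\in\mcal{M}$ one has $g_t=\nabla\mcal{L}(u_t)=u_t-u$, hence
\begin{equation*}
    P_tg_t = u_t-Pu, \qquad (I-P_t)g_t = Pu-u, \qquad \norm{(I-P_t)g_t} = e ,
\end{equation*}
and, using $\mcal{L}_{\mathrm{min},\mcal{M}}=\mcal{L}(Pu)=\tfrac12 e^2$ together with Pythagoras, $X_t:=\mcal{L}(u_t)-\mcal{L}_{\mathrm{min},\mcal{M}}=\tfrac12\norm{u_t-Pu}^2=\tfrac12\norm{P_tg_t}^2$. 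Read as an equality in~\eqref{eq:mu-PL_strong}, this identity shows~\eqref{eq:mu-PL_strong} holds with $\PL=1$; expanding the square in $\mcal{L}(u_t+g)=\tfrac12\norm{u-u_t-g}^2$ shows~\eqref{eq:L-smooth} holds with $L=1$ (both with equality). Since $R_t=I$, assumption~\eqref{eq:C-retraction} holds with $\beta_t\equiv 0$ and $C_{\mathrm R}$ arbitrarily small; because $C_{\mathrm R}$ enters the proof of Theorem~\ref{thm:descent_quasi_projection} only through the retraction term $\tfrac{C_{\mathrm R}}{2}s_t^2\mbb{E}[\norm{P_t^ng_t}^2\mid\mcal{F}_t]$, which vanishes for $R_t=I$, we may simply take $C_{\mathrm R}=0$. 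Finally Lemma~\ref{lem:sgd_bias_variance} gives the quasi-projection constants $c_{\mathrm{bias},1}=1$, $c_{\mathrm{bias},2}=0$, $c_{\mathrm{var},1}=\tfrac{n-1+k_t}{n}$, $c_{\mathrm{var},2}=\tfrac{k_t}{n}$, and under optimal (Christoffel) sampling $k_t\le d_t=d$.

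With these values I would take the optimal step size of Corollary~\ref{corollary:exponential_conv_expectation}, i.e.\ the maximiser $s_t\equiv s^\ast:=\tfrac{c_{\mathrm{bias},1}}{(L+C_{\mathrm R})c_{\mathrm{var},1}}=\tfrac1{c_{\mathrm{var},1}}$ of the descent factor $\sigma_t$; it is deterministic, hence $\mcal{F}_1$-measurable. Then $\sigma_t\equiv\tfrac1{2c_{\mathrm{var},1}}$ and the contraction factor is constant,
\begin{equation*}
    a_t\equiv a := 1-2\PL\sigma_t = 1-\tfrac1{c_{\mathrm{var},1}} = \tfrac{k_t-1}{n-1+k_t} \le \tfrac{d-1}{n-1+d} = \tfrac{d-1}{(1+c)(d-1)} = \tfrac1{1+c}\in(0,1) ,
\end{equation*}
using $k_t\le d$ and $n=c(d-1)$. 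Since $\norm{(I-P_t)g_t}=e$ and $\beta_t=0$, the hypothesis of Theorem~\ref{thm:convergence_in_expectation} holds with equality for the deterministic sequence $\xi_t\equiv\xi:=(s^\ast)^2\tfrac{c_{\mathrm{var},2}}{2}e^2$, and a one-line computation gives $\tfrac{\xi}{1-a}=\tfrac{c_{\mathrm{var},2}}{2c_{\mathrm{var},1}}e^2=\tfrac{k_t}{2(n-1+k_t)}e^2\le\tfrac1{2(1+c)}\tfrac{d}{d-1}e^2$, whence $\bar\xi_T=\xi\tfrac{1-a^T}{1-a}\le\tfrac1{2(1+c)}\tfrac{d}{d-1}e^2$ for all $T$.

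Now Theorem~\ref{thm:convergence_in_expectation}, after taking total expectations (everything in sight being $\mcal{F}_1$-measurable), yields $\mbb{E}[X_{T+1}]\le a^T X_1+\bar\xi_T$; since $u_1=0$ we have $X_1=\mcal{L}(0)-\tfrac12e^2=\tfrac12(\norm{u}^2-e^2)\le\tfrac12\norm{u}^2$. Using $\norm{u-u_{T+1}}^2=2\mcal{L}(u_{T+1})=2X_{T+1}+e^2$ and $e^2=\norm{u-P_{T+1}u}^2$, I obtain
\begin{equation*}
    \mbb{E}\bracs*{\norm{u-u_{T+1}}^2} \le a^T\norm{u}^2 + \pars*{1+\tfrac1{1+c}\tfrac{d}{d-1}}\norm{u-P_{T+1}u}^2 ,
\end{equation*}
and dividing by $\norm{u}^2$ gives the asserted inequality with residual term $a^T$ in place of $\varepsilon$. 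Finally $a\le(1+c)^{-1}$ forces $a^T\le\varepsilon$ once $T\ge\tfrac{\ln(\varepsilon^{-1})}{\ln(1+c)}$, which is guaranteed by the stated number of iterations since $\tfrac{\ln(2\varepsilon^{-1})}{\ln(1+c)}\ge\tfrac{\ln(\varepsilon^{-1})}{\ln(1+c)}$.

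There is no genuine analytic obstacle here — the model class is linear and the loss quadratic, so everything is an application of Theorem~\ref{thm:convergence_in_expectation}. The only points needing care are the bookkeeping reductions ($L=1$, $\PL=1$, the $C_{\mathrm R}=0$ reduction), the observation that $\norm{(I-P_t)g_t}\equiv e$ is exactly constant so that the perturbation sequence $\xi_t$ can be taken deterministic, and the algebraic collapse $n-1+d=(1+c)(d-1)$ that turns the bias bound $\bar\xi_T$ into precisely the advertised constant $\tfrac1{2(1+c)}\tfrac{d}{d-1}e^2$.
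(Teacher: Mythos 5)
Your proof is correct and follows essentially the same route as the paper: both reduce to Theorem~\ref{thm:convergence_in_expectation} with $L=\lambda=1$, $C_{\mathrm{R}}=\beta_t=0$, the optimal constant step size, contraction factor $a=\tfrac{1}{1+c}$ via the identity $n+d-1=(1+c)(d-1)$, the geometric-series bound $\bar\xi_T\le\tfrac{1}{2(1+c)}\tfrac{d}{d-1}\norm{u-Pu}^2$, and Pythagoras to pass to the relative error. The only (harmless) difference is bookkeeping: you carry $k_t\le d$ explicitly and obtain the slightly sharper residual $a^T$ in place of the paper's $2(1+c)^{-t}$, so the stated iteration count $t=\tfrac{\ln(2\varepsilon^{-1})}{\ln(1+c)}$ suffices with room to spare.
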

\begin{proof}
    Since all assumptions of Theorem~\ref{thm:convergence_in_expectation} are fulfilled, the loss iterates satisfy
    \begin{equation}
    \label{eq:exp_rate_linear_discussion}
        \mbb{E}\bracs*{\mcal{L}\pars{u_{\tau+1}} - \mcal{L}_{\mathrm{min},\mcal{M}} \mid \mcal{F}_1} \le \pars*{\prod_{t=1}^\tau a_t} \mbb{E}\bracs*{\mcal{L}\pars{u_{1}} - \mcal{L}_{\mathrm{min},\mcal{M}} \mid \mcal{F}_1} + \bar\xi_\tau,
    \end{equation}
    where $\bar\xi_0 := 0$, $\bar\xi_{t+1} := \xi_{t+1} + a_{t+1} \bar\xi_t$ and $\xi_t := s_t^2 \tfrac{L+C_{\mathrm{R}}}{2}c_{\mathrm{var},2}\norm{(I-P_t)g_t}^2 + \beta_t$.
    Note that $\beta_t\equiv C_{\mathrm{R}} = 0$ since $R_t$ is the identity. 
    The factor $a_t = 1 - 2\PL\sigma_t$ is minimal when $\sigma_t>0$ is maximal and $\sigma_t = c_{\mathrm{bias},1} s_t - s_t^2 \tfrac{L}{2} \tfrac{n+d-1}n$ is maximised for $s_t = \tfrac1L \tfrac{n}{n+d-1}$, where we have already used the bounds for $c_{\mathrm{bias},1}$ and $c_{\mathrm{var},1}$ from Lemma~\ref{lem:sgd_bias_variance}.
    Since $L = \PL = 1$, choosing the optimal step size and $n=c(d-1)$ yields
    $$
        a_t
        = 1 - \tfrac{n}{n+d-1}
        = \tfrac{d-1}{n+d-1}
        = \tfrac{1}{1+c}
    \qquad\text{and}\qquad
        \xi_t
        = \tfrac{nd}{2(n+d-1)^2} \norm{(I-P) g_t}^2
        = \tfrac{a_t^2}{2}\tfrac{cd}{d-1} \norm{(I-P) g_t}^2 ,
    $$
    again, using the bounds for $c_{\mathrm{bias},2}$ and $c_{\mathrm{var},2}$ from Lemma~\ref{lem:sgd_bias_variance}.
    Before substituting these values into equation~\eqref{eq:exp_rate_linear_discussion} it is convenient to simplify the expressions further.
    For this, note that $\norm{(I-P_t)g_t} = \norm{u - P_tu}$ and $\mathcal{L}\pars{u_t} - \mathcal{L}_{\mathrm{min},\mathcal{M}} = \tfrac12\norm{P_tu-u_t}^2$.
    With the choice $u_1 = 0$, equation~\eqref{eq:exp_rate_linear_discussion} thus implies
    \begin{align}
        \mbb{E}\bracs*{\norm{P_tu-u_t}^2}
        &\le 2 \pars*{a_t^{t}\norm{P_tu}^2 + \tfrac{1 - a_t^t}{1 - a_t}\tfrac{a_t^2}{2}\tfrac{cd}{d-1}\norm{u - P_tu}^2} \\
        &\le 2 (1+c)^{-t}\norm{P_tu}^2 + \tfrac{1}{c(1+c)}\tfrac{cd}{d-1}\norm{u - P_tu}^2 .
    \end{align}
    Finally, recall that $\norm{u - u_t}^2 = \norm{u - P_tu}^2 + \norm{P_tu - u_t}^2$ and thus
    $$
        \mbb{E}\bracs*{\frac{\norm{u-u_t}^2}{\norm{u}^2}}
        \le \pars{1 + \tfrac{1}{1+c}\tfrac{d}{d-1}}\frac{\norm{u - P_tu}^2}{\norm{u}^2} + 2 (1+c)^{-t} .
    $$
    The perturbation $2(1+c)^{-t}$ \revision[0]{is below} the bound $\varepsilon$ after $t = \tfrac{\ln(2\varepsilon^{-1})}{\ln(1+c)}$ steps.
\end{proof}

The preceding corollary provides a perturbed relative error bound.
For $c=1$ we reach machine precision $\varepsilon = 2^{-53}$ (for the IEEE $64$ bit floating point standard) after $t_{\mathrm{max}} = 54$ iterations.
This means that at most $N\le 54(d-1)$ sample points are required to achieve the same relative quasi-best approximation error as the boosted least squares approximation~\cite{haberstich2022boosted}
$$
    \mbb{E}\bracs*{\frac{\norm{u - \hat{P}^n u}^2}{\norm{u}^2}}
    \lesssim \frac{\norm{u - Pu}^2}{\norm{u}^2} .
$$
Moreover, if we require the perturbation to be of the same order as the error, i.e.\ $\varepsilon = \frac{\norm{u-P_tu}^2}{\norm{u}^2}$, we obtain the well-known sample size bound $N \gtrsim d_t\ln(d_t)$.
\footnote{
We require $t = \frac{\ln(2\varepsilon^{-1})}{\ln(1+c)}$ many steps with $n=c(d-1)$ sample points each.
This means that the total number of samples is bounded by $N \lesssim c(d-1) \frac{\ln(d)}{\ln(1+c)} \in \Theta(d\ln(d))$.}
In particular, the presented algorithm remains optimal in expectation only up to a logarithmic factor.
Optimal bounds are provided in~\cite{Dolbeault_2023}, and bounds that can be achieved without optimal sampling can be found in~\cite{Krieg2023}.
Also note that another method which kills the logarithmic factor by multi-level optimal sampling in certain situations has been introduced in~\cite{Krieg2018}.

Another interesting point is that, in contrast to projections, quasi-projections do not require $c>1$.
Indeed, the results are also valid for the minimal choice $c=\tfrac1{d-1}$ (i.e.\ $n=1$).
Intuitively, one would expect this to increase convergence speed since every new sample point can benefit from the most recent gradient information.
And indeed, it can be shown that the convergence rate in Corollary~\ref{cor:convergence_up_to_precision} is maximised for $n=1$.

\subsection{Almost sure convergence in the unbiased case}
\label{sec:Convergence_as}

In practice, \revision[0]{convergence} with a probability smaller than $1$ might be unsatisfactory.
We, therefore, prove almost sure convergence of the descent method in the subsequent section.
The results of this section rely on the following fundamental proposition.

\begin{proposition}(Robbins~and~Siegmund,~1971,~\cite{Robbins1971}) 
\label{prop:RobbinsSiegmund}
Let $X_t, Y_t$ and $Z_t$ be three sequences of random variables that are adapted to a filtration $(\mathcal{F}_t)$. Let $(\gamma_t)$ be a sequence of nonnegative real numbers such that $\prod\limits_{t=1}^\infty (1+\gamma_t)<\infty$. Suppose the following conditions hold:
\begin{enumerate}
   \item $X_t,Y_t$ and $Z_t$ are non-negative for all $t\geq 1.$
   \item $\mathbb{E}[Y_{t+1}\mid \mathcal{F}_t]\leq (1+\gamma_t)Y_t - X_t + Z_t$ for all $t\geq 1$. 
   \item $ (Z_t) \in \ell^1$ almost surely.
\end{enumerate}
Then, $X_t\in\ell^1$ almost surely and $Y_t$ converges almost surely.
\end{proposition}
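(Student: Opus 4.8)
This is the classical Robbins--Siegmund almost-supermartingale lemma; I would reproduce the standard supermartingale convergence argument, the one genuine subtlety being that hypothesis~(3) only provides \emph{almost sure}, hence random, summability of $(Z_t)$, which rules out a direct appeal to Doob's theorem and forces a localisation.

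\textbf{Step 1 (reduction to $\gamma_t\equiv 0$).} The plan is first to absorb the multiplicative factors. Set $\Pi_1:=1$ and $\Pi_t:=\prod_{k=1}^{t-1}(1+\gamma_k)^{-1}$ for $t\ge 2$; since $\prod_t(1+\gamma_t)<\infty$, the sequence $(\Pi_t)$ is positive, non-increasing, and converges to some $\Pi_\infty>0$. Putting $\widetilde Y_t:=\Pi_tY_t$, $\widetilde X_t:=\Pi_{t+1}X_t$ and $\widetilde Z_t:=\Pi_{t+1}Z_t$, one multiplies~(2) by the constant $\Pi_{t+1}=\Pi_t(1+\gamma_t)^{-1}$ to obtain $\mathbb{E}[\widetilde Y_{t+1}\mid\mathcal{F}_t]\le \widetilde Y_t-\widetilde X_t+\widetilde Z_t$, while $0\le\widetilde Z_t\le Z_t$ keeps $(\widetilde Z_t)\in\ell^1$ a.s. Since $\Pi_t^{-1}\uparrow\Pi_\infty^{-1}<\infty$, proving $\widetilde X_t\in\ell^1$ a.s.\ and $\widetilde Y_t$ convergent a.s.\ yields the same for $X_t$ and $Y_t$; so from now on I assume $\gamma_t\equiv 0$.

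\textbf{Step 2 (an almost-supermartingale and its defect).} Introduce
$$V_t:=Y_t+\sum_{k=1}^{t-1}X_k\ge 0,\qquad M_t:=V_t-\sum_{k=1}^{t-1}Z_k .$$
Adding $\sum_{k=1}^{t}X_k$ to hypothesis~(2) gives $\mathbb{E}[V_{t+1}\mid\mathcal{F}_t]\le V_t+Z_t$, and subtracting the $\mathcal{F}_t$-measurable quantity $\sum_{k=1}^{t}Z_k$ turns this into $\mathbb{E}[M_{t+1}\mid\mathcal{F}_t]\le M_t$, i.e.\ $(M_t)$ is a supermartingale. The difficulty is that $M_t$ is bounded below only by $-\sum_{k\ge 1}Z_k$, which by~(3) is finite a.s.\ but is a \emph{random} bound, so the non-negative supermartingale convergence theorem does not apply to $(M_t)$ itself.

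\textbf{Step 3 (localisation).} For $a>0$ set $\tau_a:=\inf\{t\ge 1:\sum_{k=1}^{t}Z_k\ge a\}$ with $\inf\emptyset:=\infty$; as $\sum_{k=1}^{t}Z_k$ is $\mathcal{F}_t$-measurable, $\tau_a$ is a stopping time. By minimality of $\tau_a$ one has $\sum_{k=1}^{(t\wedge\tau_a)-1}Z_k<a$ for every $t$, hence $M_{t\wedge\tau_a}\ge -a$. The stopped process $(M_{t\wedge\tau_a})_t$ is still a supermartingale, so $M_{t\wedge\tau_a}+a$ is a non-negative supermartingale and converges almost surely. On $\{\tau_a=\infty\}$ one has $M_{t\wedge\tau_a}=M_t$ for all $t$, so $M_t$ converges a.s.\ on that event. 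By~(3), $\sum_{k\ge 1}Z_k<\infty$ a.s., hence $\mathbb{P}(\tau_a=\infty)\to 1$ as $a\to\infty$ and $\bigcup_{a\in\mathbb{N}}\{\tau_a=\infty\}$ has full probability; therefore $M_t$ converges almost surely.

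\textbf{Step 4 (conclusion).} On the almost sure event on which $M_t$ converges and $\sum_{k\ge 1}Z_k<\infty$, the sequence $V_t=M_t+\sum_{k=1}^{t-1}Z_k$ converges to a finite limit, so $\sup_tV_t<\infty$. The partial sums $\sum_{k=1}^{t-1}X_k$ are non-decreasing and bounded above by $\sup_tV_t$, whence $\sum_{k\ge 1}X_k<\infty$, i.e.\ $(X_t)\in\ell^1$ a.s.; and then $Y_t=V_t-\sum_{k=1}^{t-1}X_k$ is a difference of two a.s.-convergent sequences and converges almost surely. The step I expect to be the real obstacle is Step~3: condition~(3) supplies only a pathwise, random $\ell^1$ bound on $(Z_t)$, so $(M_t)$ fails to be a non-negative supermartingale and Doob's theorem cannot be used directly; truncating at the first time the accumulated $Z$-mass reaches level $a$ and then letting $a\to\infty$ is exactly what repairs this. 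A secondary technical point, which I would dispatch in a sentence, is the integrability needed for the conditional expectations and the supermartingale property to be meaningful: here one uses $0\le X_t\le Y_t+Z_t$ together with $\mathbb{E}[V_{t\wedge\tau_a}]\le\mathbb{E}[Y_1]+a$, conditioning on $\mathcal{F}_1$ if $Y_1$ is not assumed integrable.
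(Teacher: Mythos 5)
The paper does not prove Proposition~\ref{prop:RobbinsSiegmund} at all: it is imported verbatim as a classical result of Robbins and Siegmund, with the proof delegated to the cited reference. Your argument is the standard (and correct) proof of that lemma: the rescaling by $\prod_k(1+\gamma_k)^{-1}$ to remove the multiplicative drift, the compensated process $M_t = Y_t + \sum_{k<t}X_k - \sum_{k<t}Z_k$, and the localisation at $\tau_a = \inf\{t : \sum_{k\le t}Z_k \ge a\}$ to convert the merely almost-sure summability of $(Z_t)$ into a deterministic lower bound on the stopped supermartingale. You have also correctly identified the two genuine technical points (the randomness of the $\ell^1$ bound on $(Z_t)$, and integrability of $M_{t\wedge\tau_a}$, handled via $\mathbb{E}[V_{t\wedge\tau_a}]\le\mathbb{E}[Y_1]+a$ or by conditioning on $\mathcal{F}_1$), so there is nothing to correct relative to the source the paper relies on.
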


This requires us to assume $\beta\in\ell^1$ almost surely and $c_{\mathrm{bias},2} = 0$, because both terms enter into the $Z_t$ term in Proposition~\ref{prop:RobbinsSiegmund} during the proofs.
\revision[0]{In the following sections~\ref{section:almost_sure_LS},~
\ref{section:almost_sure_SPL} and~\ref{section:almost_sure_PL} we assume $c_{\mathrm{bias},2} = 0$ and discuss the case $c_{\mathrm{bias},2} \neq 0$ in section~\ref{section:almost_biased}.}

\subsubsection{Almost sure convergence under assumption~\eqref{eq:L-smooth}}
\label{section:almost_sure_LS}

Theorem~\ref{thm:descent_quasi_projection} provides conditions on the step size and sample size under which a single step of the descent scheme~\eqref{eq:descent_scheme} reduces the loss.
Notably, the descent depends on the part of the gradient that is not captured by the local linearisation, i.e.\ $\norm{(I-P_t)g_t}$.
This makes intuitive sense since an empirical estimate must depend on the function $g_t$ and can not just depend on the unknown projection $P_t g_t$.
This has profound implications.
If the sequence of iterates~\eqref{eq:descent_scheme} converges to a point $u_\infty$ with $P_{u_\infty}\nabla\mcal{L}\pars{u_\infty} = 0$ but $\nabla\mcal{L}\pars{u_\infty} \neq 0$, then there exists a constant such that $\norm{(I-P_t)g_t} \ge c > 0$.
Inserting this into the equation~\eqref{eq:descent_bound} and taking the limit $\norm{P_t g_t}\to 0$ yields the asymptotic descent bound
$$
    \mbb{E}\bracs{\mcal{L}\pars{u_{t+1}}\,|\, \mcal{F}_t}
    \le \mcal{L}\pars{u_t} + s_t^2 \tfrac{L+C_{\mathrm{R}}}{2} c_{\mathrm{var},2} c^2 +\beta_t .
$$
This means that the step size sequence $s_t$ must converge to zero to ensure convergence to a stationary point.
The subsequent theorem uses this intuition to prove that a sequence of iterates almost surely converges to a stationary point when the step size sequence satisfies certain summability conditions.

\begin{theorem}
\label{thm:stationary_point}
    Assume that the loss function satisfies assumptions~\eqref{eq:lower_bound},~\eqref{eq:L-smooth} and~\eqref{eq:C-retraction} and that for every current step $t\ge0$ the  $x_1, \ldots, x_n$ are i.i.d.\ samples drawn from $\mu_t$ given $\mcal{F}_t$.
    Moreover, let $\xi_t\in\ell^1$ and assume that the steps size sequence satisfies $$
        s_t^2 \tfrac{L+C_{\mathrm{R}}}{2}c_{\mathrm{var},2}\norm{\pars{I-P_t}g_t}^2 + \beta_t \le \xi_t
        \qquad\text{and}\qquad
        s_t \le \bar{s} := \tfrac{1}{L+C_{\mathrm{R}}}\tfrac{c_{\mathrm{bias},1}}{c_{\mathrm{var},1}} .
    $$
    Then it holds almost surely that
    $$
        \min_{t=1,\ldots,\tau} \norm{P_tg_t} \lesssim \pars*{\sum_{t=1}^\tau \sigma_t}^{-1/2}
        \qquad\text{with}\qquad
        \sigma_t := c_{\mathrm{bias},1} s_t - s_t^2 \tfrac{L+C_{\mathrm{R}}}{2} c_{\mathrm{var},1} .
    $$
    Moreover, if $(s_t)\not\in \ell^1$, then zero is an accumulation point of the sequence $(\norm{P_tg_t})$ and if $s_t \ge s^* >0$, the sequence $(\norm{P_tg_t})$ converges to zero.
\end{theorem}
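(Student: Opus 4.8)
The plan is to apply the Robbins--Siegmund proposition (Proposition~\ref{prop:RobbinsSiegmund}) to the non-negative process $Y_t := \mcal{L}(u_t) - \mcal{L}_{\mathrm{min},\mcal{M}}$, which is well-defined and non-negative by assumption~\eqref{eq:lower_bound}. From Theorem~\ref{thm:descent_quasi_projection} with $c_{\mathrm{bias},2}=0$ and the step-size restriction $s_t \le \bar s = \tfrac{1}{L+C_{\mathrm{R}}}\tfrac{c_{\mathrm{bias},1}}{c_{\mathrm{var},1}}$, the coefficient $\sigma_t = c_{\mathrm{bias},1}s_t - s_t^2\tfrac{L+C_{\mathrm{R}}}{2}c_{\mathrm{var},1}$ of $-\norm{P_tg_t}^2$ is non-negative (in fact $\sigma_t \ge \tfrac{c_{\mathrm{bias},1}}{2}s_t > 0$), so we obtain
\begin{equation}
    \mbb{E}\bracs*{Y_{t+1}\mid\mcal{F}_t} \le Y_t - \sigma_t\norm{P_tg_t}^2 + \xi_t ,
\end{equation}
which is exactly the Robbins--Siegmund inequality with $\gamma_t \equiv 0$, $X_t = \sigma_t\norm{P_tg_t}^2$ and $Z_t = \xi_t \in \ell^1$ (almost surely, by hypothesis). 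The proposition then yields that $Y_t$ converges almost surely and, crucially, that $\sum_{t=1}^\infty \sigma_t\norm{P_tg_t}^2 < \infty$ almost surely.

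**From summability to the quantitative bound.** The summability $\sum_t \sigma_t\norm{P_tg_t}^2 =: S < \infty$ immediately gives the first claim: since $\min_{t\le\tau}\norm{P_tg_t}^2 \cdot \sum_{t=1}^\tau\sigma_t \le \sum_{t=1}^\tau\sigma_t\norm{P_tg_t}^2 \le S$, we get $\min_{t\le\tau}\norm{P_tg_t}^2 \le S\pars*{\sum_{t=1}^\tau\sigma_t}^{-1}$, hence $\min_{t\le\tau}\norm{P_tg_t} \lesssim \pars*{\sum_{t=1}^\tau\sigma_t}^{-1/2}$ with the (random, a.s. finite) implied constant $S^{1/2}$. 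For the accumulation-point statement, suppose $(s_t)\notin\ell^1$; since $\sigma_t \ge \tfrac{c_{\mathrm{bias},1}}{2}s_t$ we have $\sum_t\sigma_t = \infty$, so the partial sums on the right diverge and the bound forces $\liminf_{t}\norm{P_tg_t} = 0$, i.e.\ zero is an accumulation point. For the last statement, if $s_t \ge s^* > 0$ for all $t$, then $\sigma_t \ge \tfrac{c_{\mathrm{bias},1}}{2}s^* > 0$ is bounded below, so $\sum_t\sigma_t\norm{P_tg_t}^2 < \infty$ already forces $\sum_t\norm{P_tg_t}^2 < \infty$, whence $\norm{P_tg_t} \to 0$.

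**Main obstacle.** The analytical content is light once the recursion is set up; the delicate points are bookkeeping rather than deep. First, one must be careful that the Robbins--Siegmund inequality really holds \emph{after} discarding the cross term: this is where $c_{\mathrm{bias},2}=0$ is used, and it must be invoked explicitly (as the surrounding text already flags). Second, one must verify that $\xi_t \in \ell^1$ almost surely is an acceptable hypothesis to feed into the proposition; this is exactly condition~(3) there, and the statement of the theorem assumes $\xi_t \in \ell^1$, so nothing further is needed beyond noting measurability/adaptedness of $X_t, Y_t, Z_t$ to $(\mcal{F}_t)$, which follows from the $\mcal{F}_t$-measurability assumptions (ii)--(iv) in the setup. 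The only genuinely "mathematical" step is the elementary lower bound $\sigma_t \ge \tfrac{c_{\mathrm{bias},1}}{2}s_t$ on the range $s_t \le \bar s$, which transfers divergence of $\sum_t s_t$ to divergence of $\sum_t\sigma_t$ and boundedness-below of $s_t$ to boundedness-below of $\sigma_t$; this is a one-line computation. I do not anticipate a serious obstacle here — the proof is essentially an application of Robbins--Siegmund plus a min-vs-sum inequality.
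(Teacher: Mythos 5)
Your proposal is correct and follows essentially the same route as the paper: the same Robbins--Siegmund application with $Y_t=\mcal{L}(u_t)-\mcal{L}_{\mathrm{min}}$, $X_t=\sigma_t\norm{P_tg_t}^2$, $Z_t=\xi_t$, followed by the min-versus-weighted-sum inequality and the bound $\sigma_t\ge\tfrac{c_{\mathrm{bias},1}}{2}s_t$ on the range $s_t\le\bar s$. The only cosmetic difference is that you prove the inequality $\min_{t\le\tau}\eta_t\sum_{t\le\tau}\sigma_t\le\sum_{t\le\tau}\sigma_t\eta_t$ inline, whereas the paper cites it as the weighted Stechkin lemma from the literature.
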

\begin{proof}
    The proof of this theorem is inspired by the proof of Theorem~1 in~\cite{liu2022sure}.
    Define $\eta_t := \norm{P_tg_t}^2$ and recall that Theorem~\ref{thm:descent_quasi_projection} implies
    \begin{equation}
        \mbb{E}\bracs{\mcal{L}\pars{u_{t+1}} - \mcal{L}_{\mathrm{min}}\,|\, \mcal{F}_t}
        \le \pars{\mcal{L}\pars{u_t} - \mcal{L}_{\mathrm{min}}}
        - \sigma_t\eta_t + \xi_t .
    \end{equation}
    Since $Y_t := \mcal{L}\pars{u_t} - \mcal{L}_{\mathrm{min}} \ge 0$ by assumption~\eqref{eq:lower_bound}, $X_t := \sigma_t\eta_t \ge 0$ by the choice of $s_t$ and $0\le \xi\in\ell^1$ by definition, Proposition~\ref{prop:RobbinsSiegmund} implies that $\eta\in\ell^1_\sigma$ almost surely.
    The weighted version of Stechkin's lemma~\cite{trunschke2023weighted} now implies
    $$
        \min_{t=1,\ldots,\tau} \norm{P_tg_t}
        = \pars*{\min_{t=1,\ldots,\tau} \eta_t}^{1/2}
        \le \pars*{\sum_{t=1}^{\tau} \sigma_t}^{-1/2} \norm{\eta}_{\ell^1_\sigma}^{1/2} ,
    $$
    \revision[0]{where $\|\eta\|_{\ell^1_\sigma} := \sum_{t\in\mathbb{N}} |\sigma_t\eta_t|$.}
    To show the last two assertions, recall that $\sigma_t = s_t - \tfrac{s_t^2}{2\bar{s}}$.
    Since the step sizes have to satisfy $s_t \le \bar{s}$, it holds that $\sigma_t \ge \tfrac{s_t}{2}$.
    So if $s\not\in\ell^1$, then also $\sigma\not\in\ell^1$ and we can conclude that
    $$
        \lim_{\tau\to\infty} \min_{t=1,\ldots,\tau} \norm{P_tg_t} = 0 .
    $$
    This means, that zero is an accumulation point of the sequence $\norm{P_tg_t}$.
    If moreover $s_t\ge s^* >0$, then $\sigma_t \ge \tfrac{s_t}2 \ge \tfrac{s^*}{2} =: \sigma^*$.
    In this case $\eta\in\ell^1_\sigma \subseteq \ell^1_{\sigma^*} = \ell^1$ and $\eta$ must converge to zero.
\end{proof}

In the preceding theorem, we have seen convergence if the step size sequence satisfies certain bounds that depend on the value of $\norm{(I-P_t)g_t}$ and $\beta_t$.
In the best case of $\norm{(I-P_t)g_t} = \beta_t = 0$, the step size can be chosen constant, and the rate of convergence is of order
\begin{equation}
\label{eq:rate_recovery_BLSCR}
    \min_{t=1,\ldots,\tau} \norm{P_tg_t}^2 \lesssim \frac1{\tau^{1 - \varepsilon}},
\end{equation}
which is, up to $\varepsilon$, the rate of convergence of the deterministic gradient descent algorithm.
In the worst-case setting $\norm{(I-P_t)g_t} + \beta_t\ge c > 0$, Theorem~\ref{thm:stationary_point} requires the classical Robbins--Monro step size condition $s\in\ell^2$ and $s\not\in\ell^1$.
If we choose $s_t := \tfrac{1}{t^{1/2+\varepsilon}}$ for some $\varepsilon\in\pars{0,\tfrac12}$, we obtain a rate of convergence of
\begin{equation}
\label{eq:rate_nonrecovery_BLSCR}
        \min_{t=1,\ldots,\tau} \norm{P_tg_t}^2 \lesssim \frac1{\tau^{1/2 - \varepsilon}}.
\end{equation}

This is the same rate of convergence as is known for classical stochastic gradient descent~\cite{liu2022sure}.
Besides these two extreme cases, Theorem~\ref{thm:stationary_point} also provides conditions for the intermediate case $\norm{(I-P_t)g_t} \to 0$.

\subsubsection{Almost sure convergence under assumption~\eqref{eq:mu-PL_strong}}
\label{section:almost_sure_SPL}
We have seen in equations~\eqref{eq:rate_recovery_BLSCR} and~\eqref{eq:rate_nonrecovery_BLSCR} that the proposed algorithm can already yield a faster rate of convergence than SGD in the setting of $L$-smooth loss functions.
This motivates our investigation of the rate of convergence under the additional assumption~\eqref{eq:mu-PL_strong}.
The remainder of this section is devoted to investigating the algorithm in this setting.

\begin{theorem}
    \label{thm:rates_abstrac_SPL}
    Assume that the loss function satisfies assumptions ~\eqref{eq:lower_bound},~\eqref{eq:L-smooth},~\eqref{eq:mu-PL_strong} and~\eqref{eq:C-retraction} with constants $L,\PL$ and $C_{\mathrm{R}}$, respectively.
    Assume further that for the current step $t\ge0$, the points $x_1, \ldots, x_n$ are i.i.d.\ samples from  $ \mu_t$ given $\mcal{F}_t$.
    Define the constants
    $$
        \sigma_t :=  c_{\mathrm{bias},1} s_t - s_t^2 \tfrac{L+C_{\mathrm{R}}}{2} c_{\mathrm{var},1}
        \qquad\text{and}\qquad
        a_t := \pars*{1 - 2\PL \sigma_t} .
    $$
    If $\sigma_t > 0$, then
    $$
        \mbb{E}\bracs*{\mcal{L}(u_{t+1}) - \mcal{L}_{\mathrm{min},\mathcal{M}}\,|\,\mcal{F}_t} \le a_t \pars*{\mcal{L}(u_t) - \mcal{L}_{\mathrm{min},\mathcal{M}}}
        + s_t^2\tfrac{L+C_{\mathrm{R}}}{2}c_{\mathrm{var},2}\norm{(I-P_t)g_t}^2 +\beta_t .
    $$
    Moreover, let $\xi_t\in\ell^1$ and assume that the step size sequence satisfies
    $$
        s_t^2 \tfrac{L+C_{\mathrm{R}}}{2}c_{\mathrm{var},2}\norm{\pars{I-P_t}g_t}^2 + \beta_t \le \xi_t .
    $$
    Then for any sequence $\chi_t\geq 0$ that satisfies $a_t\chi_{t+1} \leq \chi_t$ for all $t\geq T$, $T\in\mathbb{N}$ and $\chi_{t+1}\xi_t\in\ell^1$, it holds almost surely that
    $$
        \mcal{L}\pars{u_t} - \mcal{L}_{\mathrm{min},\mathcal{M}}
        \lesssim \chi_t^{-1} .
    $$
\end{theorem}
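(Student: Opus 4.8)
The plan is to prove the two assertions in turn; both follow quickly from results already established. For the first, displayed inequality, I would start from the one-step estimate of Theorem~\ref{thm:descent_quasi_projection}. In the unbiased regime $c_{\mathrm{bias},2}=0$ treated in this section, its cross term $s_tc_{\mathrm{bias},2}\norm{P_tg_t}\norm{(I-P_t)g_t}$ drops, so that estimate reads $\mbb{E}[\mcal{L}(u_{t+1})\mid\mcal{F}_t]\le\mcal{L}(u_t)-\sigma_t\norm{P_tg_t}^2+s_t^2\tfrac{L+C_{\mathrm{R}}}{2}c_{\mathrm{var},2}\norm{(I-P_t)g_t}^2+\beta_t$. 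Subtracting $\mcal{L}_{\mathrm{min},\mcal{M}}$ from both sides and inserting the strong Polyak--\L{}ojasiewicz bound~\eqref{eq:mu-PL_strong} in the form $\norm{P_tg_t}^2\ge 2\PL(\mcal{L}(u_t)-\mcal{L}_{\mathrm{min},\mcal{M}})$ --- which is permissible against the negative coefficient $-\sigma_t$ precisely because $\sigma_t>0$ --- replaces $-\sigma_t\norm{P_tg_t}^2$ by $-2\PL\sigma_t(\mcal{L}(u_t)-\mcal{L}_{\mathrm{min},\mcal{M}})$ and gives the claimed recursion with contraction factor $a_t=1-2\PL\sigma_t$.

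For the almost sure rate, the crucial step is choosing the right quantity to monitor. I would set $X_t:=\mcal{L}(u_t)-\mcal{L}_{\mathrm{min},\mcal{M}}$, which is nonnegative by~\eqref{eq:lower_bound} since every iterate lies in $\mcal{M}$, and $Y_t:=\chi_tX_t\ge 0$. The first part combined with the step-size hypothesis $s_t^2\tfrac{L+C_{\mathrm{R}}}{2}c_{\mathrm{var},2}\norm{(I-P_t)g_t}^2+\beta_t\le\xi_t$ yields the drift bound $\mbb{E}[X_{t+1}\mid\mcal{F}_t]\le a_tX_t+\xi_t$. Multiplying through by $\chi_{t+1}\ge 0$ and then using $a_t\chi_{t+1}\le\chi_t$ (valid for $t\ge T$) together with $X_t\ge 0$ gives $\mbb{E}[Y_{t+1}\mid\mcal{F}_t]\le Y_t+\chi_{t+1}\xi_t$ for all $t\ge T$. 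This is precisely the setting of the Robbins--Siegmund lemma (Proposition~\ref{prop:RobbinsSiegmund}), applied to the time-shifted process $(Y_{T+t})_{t\ge 0}$ adapted to $(\mcal{F}_{T+t})_{t\ge 0}$, with $\gamma_t\equiv 0$, vanishing subtracted term, and $Z_t:=\chi_{t+1}\xi_t\in\ell^1$ almost surely; it follows that $Y_t$ converges almost surely to a finite limit. In particular $M:=\sup_{t\ge 1}Y_t<\infty$ almost surely, the finitely many indices $t<T$ being harmless since each iterate has finite loss. Hence $X_t=Y_t/\chi_t\le M\chi_t^{-1}$ wherever $\chi_t>0$, while the bound is vacuous when $\chi_t=0$; in every case $\mcal{L}(u_t)-\mcal{L}_{\mathrm{min},\mcal{M}}\lesssim\chi_t^{-1}$ almost surely.

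I do not anticipate a genuine obstacle here: the rescaling $Y_t=\chi_tX_t$ is dictated by the structure of the estimate, and the hypothesis $a_t\chi_{t+1}\le\chi_t$ is engineered precisely so that this rescaling turns the linear contraction into an (almost-)supermartingale inequality to which Robbins--Siegmund applies, whereupon $Y_t=\mathcal{O}(1)$ translates into $X_t=\mathcal{O}(\chi_t^{-1})$. The points that need attention are purely bookkeeping: the drift inequality for $Y_t$ is only available from index $T$ onward, so the lemma must be applied to the shifted process; and since the step sizes $s_t$ are $\mcal{F}_t$-measurable, the sequences $\xi_t$ and $\chi_{t+1}\xi_t$ are random, so every occurrence of ``$\ell^1$'' is to be read as ``$\ell^1$ almost surely''.
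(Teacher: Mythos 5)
Your proposal is correct and follows essentially the same route as the paper: derive the one-step contraction from Theorem~\ref{thm:descent_quasi_projection} together with~\eqref{eq:mu-PL_strong} (using $\sigma_t>0$ and $c_{\mathrm{bias},2}=0$), then rescale by $\chi_t$, exploit $a_t\chi_{t+1}\le\chi_t$ to obtain an almost-supermartingale inequality, and apply Proposition~\ref{prop:RobbinsSiegmund} to the time-shifted process with $X_t\equiv 0$ and $\gamma_t\equiv 0$. Your bookkeeping remarks (the shift to indices $t\ge T$ and reading $\ell^1$ almost surely) match what the paper does implicitly, so no further comment is needed.
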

\begin{proof}
      Using Theorem~\ref{thm:descent_quasi_projection} and subtracting $\mathcal{L}_{\mathrm{min},\mathcal{M}}$ we obtain
    \begin{align}
        \mbb{E}\bracs{\mcal{L}\pars{u_{t+1}} - \mcal{L}_{\mathrm{min},\mathcal{M}}\,|\, \mcal{F}_t} 
        &\le
        \begin{multlined}[t]
            \pars{\mcal{L}\pars{u_t} - \mcal{L}_{\mathrm{min},\mathcal{M}}}
            - \pars*{c_{\mathrm{bias},1}s_t - s_t^2 \tfrac{L+C_{\mathrm{R}}}{2} c_{\mathrm{var},1}}\norm{P_{t} g_t}^2 \\
            + s_t^2 \tfrac{L+C_{\mathrm{R}}}{2} c_{\mathrm{var},2} \norm{(I-P_t)g_t}^2
        \end{multlined} \\
        &= \pars{\mcal{L}\pars{u_t} - \mcal{L}_{\mathrm{min},\mathcal{M}}}
        - \sigma_t\norm{P_{t} g_t}^2 + s_t^2 \tfrac{L+C_{\mathrm{R}}}{2} c_{\mathrm{var},2} \norm{(I-P_t)g_t}^2
    \end{align}
    and employing assumption~\eqref{eq:mu-PL_strong} yields the first claim
    \begin{align*}
        \mbb{E}\bracs{\mcal{L}\pars{u_{t+1}} - \mcal{L}_{\mathrm{min},\mathcal{M}}\,|\, \mcal{F}_t} 
        &\le \begin{multlined}[t]
            \pars{\mcal{L}\pars{u_t} - \mcal{L}_{\mathrm{min},\mathcal{M}}}
            - 2\PL\sigma_t\pars{\mcal{L}\pars{u_t} - \mcal{L}_{\mathrm{min},\mathcal{M}}} \\
            + s_t^2 \tfrac{L+C_{\mathrm{R}}}{2} c_{\mathrm{var},2} \norm{(I-P_t)g_t}^2
        \end{multlined} \\
        &\le a_t \pars{\mcal{L}\pars{u_t} - \mcal{L}_{\mathrm{min},\mathcal{M}}}
        + \xi_t .
    \end{align*}
    Now define $\tilde{Y}_t := \mcal{L}\pars{u_{t+1}} - \mcal{L}_{\mathrm{min},\mathcal{M}}$.
    Since $\chi_t$ is non-negative and deterministic, it holds
    \begin{align*}
        \mbb{E}\bracs*{\tilde Y_{t+1} \mid \mcal{F}_t}
        \le a_t \tilde Y_t + \xi_t\quad&\Rightarrow\quad
         \mbb{E}\bracs*{\chi_{t+1}\tilde Y_{t+1} \mid \mcal{F}_t}
        \le \chi_{t+1}a_t \tilde Y_t + \chi_{t+1}\xi_t \le \chi_{t} \tilde Y_t + \chi_{t+1}\xi_t 
    \end{align*}
    We now define $Y_{t-T} := \chi_t\tilde{Y}_t$ and $Z_{t-T} := \chi_{t+1}\xi_t$. Since $(Z_t)_{t=0}^\infty\in\ell^1$ by assumption and $(Y_t)_{t=0}^\infty$ and $(Z_t)_{t=0}^\infty$ are non-negative by definition, application of Proposition \ref{prop:RobbinsSiegmund} with the choice $X_t\equiv 0$ and $\gamma_t\equiv 0$ yields convergence of $Y_{t-T} = \chi_t \tilde{Y}_t$  almost surely. Consequently, $\tilde{Y}_t\in \mathcal{O}(\chi_t^{-1})$ almost surely.
\end{proof}

As in Theorem~\ref{thm:stationary_point}, the rate of convergence in Theorem~\ref{thm:rates_abstrac_SPL} depends on the value of $\norm{(I-P_t)g_t}$.
It thus makes sense to consider the best case scenario $\lim_{t\to\infty} \norm{(I-P_t)g_t} = 0$ as well as the worst case scenario $\norm{(I-P_t)g_t} \ge c > 0$.
For the best case scenario we can show the following result.
\begin{corollary}[Exponential convergence]
\label{cor:exponential-convergence}
    Assume that the loss function satisfies assumptions~\eqref{eq:L-smooth},~\eqref{eq:mu-PL_strong} and~\eqref{eq:C-retraction} with constants $L,\PL$ and $C_{\mathrm{R}}$ and $\beta_t\equiv 0$, respectively.
    Assume further, that for the current step $t\ge0$ the $x_1, \ldots, x_n$ are i.i.d.\ samples from  $\mu_t$ given $\mcal{F}_t$. 
    Assume that $\|(I-P_t)g_t\|\le c\norm{P_tg_t}$ for some $c\geq 0$ and let $s_t\equiv \bar s$ such that 
    $ \sigma = c_{\mathrm{bias},1}\bar s - \bar s^2 \tfrac{L+C_{\mathrm{R}}}{2} (c_{\mathrm{var},1} + cc_{\mathrm{var},2})>0$ and $a = 1-2\PL \sigma < 1$.
    Then almost surely
    \begin{equation}
    \label{eq:rate_recovery_LSSPLCR}
        \mathcal{L}(u_t) - \mathcal{L}_{\mathrm{min}, \mathcal{M}}
        \le a^t .
    \end{equation}
\end{corollary}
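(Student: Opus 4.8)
The plan is to turn the one-step estimate of Theorem~\ref{thm:descent_quasi_projection} into a geometric contraction and then invoke Proposition~\ref{prop:RobbinsSiegmund}. First I would specialise Theorem~\ref{thm:descent_quasi_projection} to the present hypotheses: since $c_{\mathrm{bias},2}=0$ throughout this subsection, $\beta_t\equiv 0$, and $s_t\equiv\bar s$, the one-step bound reads
\begin{equation*}
    \mbb{E}\bracs*{\mcal{L}(u_{t+1})\mid\mcal{F}_t}
    \le \mcal{L}(u_t)
    - \pars*{c_{\mathrm{bias},1}\bar s - \bar s^2\tfrac{L+C_{\mathrm R}}{2}c_{\mathrm{var},1}}\norm{P_tg_t}^2
    + \bar s^2\tfrac{L+C_{\mathrm R}}{2}c_{\mathrm{var},2}\norm{(I-P_t)g_t}^2 .
\end{equation*}
The crucial point—and the reason this corollary is not a direct consequence of Theorem~\ref{thm:rates_abstrac_SPL}—is that here the orthogonal-complement term is \emph{not} treated as an additive perturbation $\xi_t$ but is absorbed into the descent term: the hypothesis $\norm{(I-P_t)g_t}\le c\norm{P_tg_t}$ lets us dominate $\norm{(I-P_t)g_t}^2$ by a multiple of $\norm{P_tg_t}^2$, collapsing the right-hand side to $\mcal{L}(u_t) - \sigma\norm{P_tg_t}^2$ with the constant $\sigma$ of the statement, which is positive by assumption.

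Next I would subtract $\mcal{L}_{\mathrm{min},\mcal{M}}$ and invoke assumption~\eqref{eq:mu-PL_strong} in the form $\norm{P_tg_t}^2=\norm{P_t\nabla\mcal{L}(u_t)}^2\ge 2\PL(\mcal{L}(u_t)-\mcal{L}_{\mathrm{min},\mcal{M}})$, which turns the previous estimate into
\begin{equation*}
    \mbb{E}\bracs*{\mcal{L}(u_{t+1})-\mcal{L}_{\mathrm{min},\mcal{M}}\mid\mcal{F}_t}
    \le (1-2\PL\sigma)\pars*{\mcal{L}(u_t)-\mcal{L}_{\mathrm{min},\mcal{M}}}
    = a\,\pars*{\mcal{L}(u_t)-\mcal{L}_{\mathrm{min},\mcal{M}}} .
\end{equation*}
Here the iterate on the left is nonnegative since $\mcal{L}_{\mathrm{min},\mcal{M}}$ is an infimum over $\mcal{M}$ and $u_t\in\mcal{M}$, and $a\in(0,1)$ by hypothesis.

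Finally I would rescale: setting $Y_t:=a^{-t}(\mcal{L}(u_t)-\mcal{L}_{\mathrm{min},\mcal{M}})\ge 0$, the last display reads $\mbb{E}[Y_{t+1}\mid\mcal{F}_t]\le Y_t$, so $(Y_t)$ is a nonnegative supermartingale. Applying Proposition~\ref{prop:RobbinsSiegmund} with $X_t\equiv Z_t\equiv\gamma_t\equiv 0$ (equivalently, the nonnegative supermartingale convergence theorem) shows that $Y_t$ converges almost surely to a finite limit, hence is almost surely bounded; multiplying back by $a^t$ yields $\mcal{L}(u_t)-\mcal{L}_{\mathrm{min},\mcal{M}}\lesssim a^t$ almost surely, which is the asserted rate~\eqref{eq:rate_recovery_LSSPLCR} up to the (a.s.\ finite) implicit constant—one may, e.g., normalise so that $\mcal{L}(u_1)-\mcal{L}_{\mathrm{min},\mcal{M}}\le 1$, or record the clean bound $\mbb{E}[Y_t]\le Y_0$ in expectation. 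I do not anticipate a genuine obstacle here; the only point requiring care is that it is the positivity of the \emph{full} $\sigma$—with the $c_{\mathrm{var},2}$ contribution of the orthogonal complement included—that makes the constant step size admissible, which is precisely why this exponential rate is available only in the recovery-type regime $\norm{(I-P_t)g_t}\lesssim\norm{P_tg_t}$.
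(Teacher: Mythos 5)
Your proposal is correct and follows essentially the same route as the paper: specialise the one-step bound of Theorem~\ref{thm:descent_quasi_projection}, absorb the orthogonal-complement term into the descent term via $\norm{(I-P_t)g_t}\le c\norm{P_tg_t}$, apply~\eqref{eq:mu-PL_strong} to obtain the contraction factor $a$, and conclude by Proposition~\ref{prop:RobbinsSiegmund} applied to the rescaled nonnegative sequence $a^{-t}(\mcal{L}(u_t)-\mcal{L}_{\mathrm{min},\mcal{M}})$. Your closing remark that the conclusion holds only up to an almost surely finite constant (i.e.\ $\mcal{O}(a^t)$ rather than the literal bound $a^t$) is accurate and applies equally to the paper's own argument.
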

\begin{proof}
    Recalling the proof of Theorem \ref{thm:rates_abstrac_SPL} and the definition of $\sigma_t$ and employing assumption~\eqref{eq:mu-PL_strong} we analogously obtain
    \begin{align}
        \mbb{E}\bracs{\mcal{L}\pars{u_{t+1}} - \mcal{L}_{\mathrm{min},\mathcal{M}}\,|\, \mcal{F}_t} 
        &\leq \pars{\mcal{L}\pars{u_t} - \mcal{L}_{\mathrm{min},\mathcal{M}}}
        - \sigma_t\norm{P_{t} g_t}^2 + s_t^2 \tfrac{L+C_{\mathrm{R}}}{2} c_{\mathrm{var},2} \norm{(I-P_t)g_t}^2 \\
        &\leq \pars{\mcal{L}\pars{u_t} - \mcal{L}_{\mathrm{min},\mathcal{M}}}
        - \sigma_t\norm{P_{t} g_t}^2 + s_t^2 \tfrac{L+C_{\mathrm{R}}}{2} c c_{\mathrm{var},2} \norm{P_tg_t}^2 \\
        &= 
        \pars{\mcal{L}\pars{u_t} - \mcal{L}_{\mathrm{min},\mathcal{M}}}
        -  \sigma \norm{P_{t} g_t}^2 . \\
        &\leq 
        \pars{\mcal{L}\pars{u_t} - \mcal{L}_{\mathrm{min},\mathcal{M}}}
        - 2\PL\sigma\pars{\mcal{L}\pars{u_t} - \mcal{L}_{\mathrm{min},\mathcal{M}}} \\
        &= a \pars{\mcal{L}\pars{u_t} - \mcal{L}_{\mathrm{min},\mathcal{M}}} .
    \end{align}
    Now for $\tilde{Y}_t := \mcal{L}\pars{u_{t+1}} - \mcal{L}_{\mathrm{min},\mathcal{M}}$ and  $\chi_t = a^{-t}$ it follows
    \begin{align*}
        \mbb{E}\bracs*{\tilde Y_{t+1} \mid \mcal{F}_t}
        \le a_t \tilde Y_t  \quad&\Rightarrow\quad
          \mbb{E}\bracs*{\chi_{t+1}\tilde Y_{t+1} \mid \mcal{F}_t}
         \le \chi_{t} \tilde Y_t.
    \end{align*}
    We now define $Y_{t-T} := \chi_t\tilde{Y}_t$.
    Application of Proposition \ref{prop:RobbinsSiegmund} with the choice $X_t\equiv 0$, $\gamma_t\equiv 0$ and $Z_t\equiv 0$ yields convergence of $\tilde{Y}_t\in \mathcal{O}(\chi_t^{-1}) = \mathcal{O}(a^t)$ almost surely.
\end{proof}

\begin{remark}
    The statement of Corollary~\ref{cor:exponential-convergence} remains true, although in modified form, under the weaker assumption that $\beta_t$ decays exponentially.
    We conjecture that in general
    $$
        \mcal{L}(u_t) - \mcal{L}_{\mathrm{min},\mcal{M}} \lesssim a^t + \beta_t .
    $$
\end{remark}

The situation of Corollary~\ref{cor:exponential-convergence} applies in two particular cases where $\lim_{t\to\infty} \norm{(I-P_t)g_t} = 0$.
In the first case, the best-case scenario $\norm{(I-P_t)g_t} = 0$, we can choose $c=0$ in Corollary~\ref{cor:exponential-convergence}.
In the second case, we consider a model class $\mcal{M}$ with positive reach \revision[0]{$R_{\mathcal{M}} := \operatorname{rch}(\mathcal{M}) > 0$} \revision[0]{(see~\cite{Federer1959,trunschke2023convergence} for a definition)}.
In this setting, we can prove the subsequent Lemma~\ref{lemma:kappa_bounded_reach} in Appendix~\ref{app:kappa_bound_reach}.
\begin{lemma}
\label{lemma:kappa_bounded_reach}
    Suppose that $\mcal{M}$ is a differentiable manifold and that $\mcal{T}_v$ is the tangent space of $\mcal{M}$ at $v\in\mcal{M}$.
    Assume moreover, that its \revision[0]{local} reach $R_v := \operatorname{rch}\pars{\mcal{M}, v} > 0$ is positive.
    Let $u\in\mcal{M}$ and consider the cost functional $\mcal{L}(v) := \tfrac12\norm{u - v}^2$.
    Then it holds for all $v\in\mcal{M}$ with $\norm{u - v} \le r \le R_v$ that
    $$
        \frac{\norm{(I-P_v)g}}{\norm{P_v g}}
        \le \frac{r}{R_v} ,
    $$
    where $g := \nabla \mcal{L}\pars{v} = v - u$.
\end{lemma}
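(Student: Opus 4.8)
The plan is to reduce the statement to a classical estimate for sets of positive reach and then close with the Pythagorean identity for the orthogonal projection $P_v$ and an elementary scalar inequality. The case $u=v$ is trivial (then $g=0$), so I assume $d:=\norm{u-v}>0$ throughout.

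First I would invoke the defining property of the (local) reach (see~\cite{Federer1959,trunschke2023convergence}): if $\operatorname{rch}(\mathcal{M},v)=R_v$, then for every $u\in\mathcal{M}$ the component of $u-v$ orthogonal to the tangent space $\mathcal{T}_v$ is controlled by the squared distance to $v$, namely
$$
    \norm{(I-P_v)(u-v)} \le \frac{\norm{u-v}^2}{2R_v}.
$$
Since $g=\nabla\mathcal{L}(v)=v-u$, this gives immediately $\norm{(I-P_v)g}\le d^2/(2R_v)$, and by hypothesis $d=\norm{u-v}\le r\le R_v$.

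Next I would use that $P_v$ is an $\mathcal{H}$-orthogonal projection, so that $\norm{g}^2=\norm{P_vg}^2+\norm{(I-P_v)g}^2$. Writing $a:=\norm{(I-P_v)g}\le d^2/(2R_v)$, this yields
$$
    \norm{P_vg}^2 = d^2 - a^2 \ge d^2\Bigl(1-\frac{d^2}{4R_v^2}\Bigr) > 0,
$$
where positivity uses $d\le R_v$. Hence
$$
    \frac{\norm{(I-P_v)g}}{\norm{P_vg}}
    \le \frac{d^2/(2R_v)}{d\sqrt{1-d^2/(4R_v^2)}}
    = \frac{t}{\sqrt{1-t^2}}, \qquad t:=\frac{d}{2R_v}\in\Bigl(0,\tfrac12\Bigr].
$$
To finish, it remains to check that $\frac{t}{\sqrt{1-t^2}}\le 2t$, which is equivalent to $1-t^2\ge\tfrac14$, i.e. $t\le\tfrac{\sqrt3}{2}$; this holds with room to spare since $t\le\tfrac12$. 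Therefore $\frac{\norm{(I-P_v)g}}{\norm{P_vg}}\le 2t=\frac{d}{R_v}\le\frac{r}{R_v}$, which is the claim.

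The only non-routine ingredient is the geometric estimate on the distance of $u$ to the affine tangent space $v+\mathcal{T}_v$; the Pythagorean identity and the one-line scalar bound are elementary. I expect the main obstacle to be formulating that geometric estimate in exactly the needed form — in particular reconciling the notion of \emph{local} reach $\operatorname{rch}(\mathcal{M},v)$ with the affine-tangent-space distance bound — but this is standard for sets of positive reach and is available in the cited references, so the argument should be short.
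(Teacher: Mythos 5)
Your proof is correct and follows essentially the same route as the paper's: both rest on the Federer reach estimate $\norm{(I-P_v)g}\le \norm{g}^2/(2R_v)$ (the paper's Proposition on sets of positive reach) combined with the Pythagorean identity for the orthogonal projection. The only difference is the closing algebra — you lower-bound $\norm{P_vg}\ge \norm{g}\sqrt{1-t^2}$ and finish with a scalar inequality, whereas the paper solves a self-referential linear inequality to get $\norm{(I-P_v)g}\le \norm{P_vg}^2/(2R_v-r)$ — and both variants yield the claimed bound.
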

Under the preconditions of the preceding lemma, \revision[0]{and recalling that $\operatorname{rch}(\mathcal{M}) := \inf_{v\in\mathcal{M}} \operatorname{rch}(\mathcal{M}, v) > 0$}, we can choose $c = \frac{r}{R_{\mathcal{M}}}$ in Corollary~\ref{cor:exponential-convergence}.

We now investigate the worst case scenario $\norm{(I-P_t)g_t} \ge c > 0$ and obtain the following result.

\begin{corollary}[Sublinear convergence]
\label{cor:quasi-linear-convergence}
    Assume that the loss function satisfies assumptions~\eqref{eq:L-smooth},~\eqref{eq:mu-PL_strong} and~\eqref{eq:C-retraction} with constants $L,\PL$ and $C_{\mathrm{R}}$, respectively.
    Assume further that for the current step $t\ge0$,  the   $x_1, \ldots, x_n$ are i.i.d.\ samples drawn from $\mu_t$ given   $\mcal{F}_t$.
    Choose $\delta\in(0,\tfrac12)$ and $s_t \in \mathcal{O}(t^{\delta-1})$ as well as $\beta_t\in\mcal{O}(t^{2\delta-2})$.
    Assume $\|(I-P_t)g_t\|\in\ell^\infty$ almost surely.
    For any $\epsilon\in (2\delta,1)$ it then holds almost surely that
    \begin{equation}
    \label{eq:rate_nonrecovery_LSSPLCR}
        \mathcal{L}(u_t) - \mathcal{L}_{\mathrm{min}, \mathcal{M}} \in \mcal{O}\pars{t^{-1+\epsilon}} .
    \end{equation}
\end{corollary}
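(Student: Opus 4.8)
The plan is to obtain this as a corollary of Theorem~\ref{thm:rates_abstrac_SPL} by choosing the auxiliary sequences $\xi_t$ and $\chi_t$ to match the three quantitative scalings in the hypotheses. First I would observe that since $s_t\in\mathcal{O}(t^{\delta-1})$ with $\delta<1/2$ we have $s_t\to 0$, so for all sufficiently large $t$ the descent factor $\sigma_t = c_{\mathrm{bias},1}s_t - s_t^2\tfrac{L+C_{\mathrm{R}}}{2}c_{\mathrm{var},1}$ is positive and behaves like $\sigma_t\asymp s_t\asymp t^{\delta-1}$; consequently $a_t = 1-2\lambda\sigma_t\in(0,1)$ eventually. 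Thus all structural hypotheses of Theorem~\ref{thm:rates_abstrac_SPL} hold on the tail of the iteration, and the finitely many initial steps can be absorbed into the implicit constant of the $\mathcal{O}(\cdot)$ bound.

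Next I would set $\xi_t := s_t^2\tfrac{L+C_{\mathrm{R}}}{2}c_{\mathrm{var},2}\norm{(I-P_t)g_t}^2 + \beta_t$, which satisfies the step-size compatibility condition of the theorem with equality. Since $\norm{(I-P_t)g_t}\in\ell^\infty$ almost surely, $s_t^2\in\mathcal{O}(t^{2\delta-2})$, and $\beta_t\in\mathcal{O}(t^{2\delta-2})$ by assumption, we get $\xi_t\in\mathcal{O}(t^{2\delta-2})$ almost surely (with a path-dependent constant); because $2\delta-2<-1$ this gives $\xi_t\in\ell^1$ almost surely, as required.

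Then I would take the deterministic weight $\chi_t := t^{1-\epsilon}$ and verify the two remaining conditions of Theorem~\ref{thm:rates_abstrac_SPL}. The monotonicity requirement $a_t\chi_{t+1}\le\chi_t$ can be rewritten as $1-2\lambda\sigma_t\le(1+1/t)^{-(1-\epsilon)} = 1-\tfrac{1-\epsilon}{t}+O(t^{-2})$, so it suffices that $\sigma_t\gtrsim t^{-1}$ for large $t$, which holds because $\sigma_t\asymp t^{\delta-1}$ with $\delta-1>-1$. The summability requirement $\chi_{t+1}\xi_t\in\ell^1$ follows from $\chi_{t+1}\xi_t\in\mathcal{O}(t^{(1-\epsilon)+(2\delta-2)}) = \mathcal{O}(t^{2\delta-1-\epsilon})$ together with $2\delta-1-\epsilon<-1$, which is precisely the hypothesis $\epsilon>2\delta$. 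Theorem~\ref{thm:rates_abstrac_SPL} then yields $\mathcal{L}(u_t)-\mathcal{L}_{\mathrm{min},\mathcal{M}}\lesssim\chi_t^{-1} = t^{-1+\epsilon}$ almost surely, and $\epsilon<1$ makes this bound non-trivial.

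The step I expect to be most delicate is the lower control $\sigma_t\gtrsim t^{-1}$ needed for $a_t\chi_{t+1}\le\chi_t$: the bare hypothesis $s_t\in\mathcal{O}(t^{\delta-1})$ does not bound $\sigma_t$ from below, so the argument effectively needs a matching lower bound $s_t\gtrsim t^{\delta-1}$ (i.e.\ $s_t\asymp t^{\delta-1}$), which I would make explicit. A secondary, purely bookkeeping point is that the $\ell^1$ and $\ell^\infty$ statements above hold only almost surely with random constants; since Theorem~\ref{thm:rates_abstrac_SPL} is proved pathwise via Proposition~\ref{prop:RobbinsSiegmund}, this causes no difficulty.
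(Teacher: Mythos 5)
Your proposal is correct and follows essentially the same route as the paper: apply Theorem~\ref{thm:rates_abstrac_SPL} with $\chi_t = t^{1-\epsilon}$, bound $\xi_t \in \mathcal{O}(t^{2\delta-2})$ using the almost sure $\ell^\infty$ bound on $\norm{(I-P_t)g_t}$ together with $\epsilon>2\delta$ for the summability of $\chi_{t+1}\xi_t$, and verify $a_t\chi_{t+1}\le\chi_t$ by expanding $(t+1)^{1-\epsilon}\le t^{1-\epsilon}+(1-\epsilon)t^{-\epsilon}$. Your closing remark is apt: the paper's own verification writes $a_t = 1+\beta t^{2\delta-2}-\alpha t^{\delta-1}$, which tacitly uses $s_t\asymp t^{\delta-1}$ rather than the stated one-sided bound $s_t\in\mathcal{O}(t^{\delta-1})$ — exactly the matching lower bound you identify as the delicate step.
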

\begin{proof}
    We want to apply Theorem~\ref{thm:rates_abstrac_SPL} with $\chi_t = t^{1-\epsilon}$.
    For this, observe that large enough $t$
    $$
        \sigma_t = c_{\mathrm{bias},1} s_t -  s_t^2 \tfrac{L+C_{\mathrm{R}}}{2} c_{\mathrm{var},1}>0
        \quad \text{and}\quad
        a_t = 1-2\PL \sigma_t < 1 .
    $$
    To show that $\chi_{t+1}\xi_t \in \ell^1$, observe that $\xi_t\asymp s_t^2 + \beta_t \in \mcal{O}(t^{2\delta - 2})$ and thus $\chi_{t+1}\xi_t\in \mcal{O}\pars{(t+1)^{1-\varepsilon}t^{2\delta-2}}$.
    Since
    $$
        (t+1)^{1-\epsilon} t^{2\delta-2}
        \leq 2^{1-\epsilon} t^{1-\epsilon+2\delta-2}
        \in \mathcal{O}( t^{2\delta-\epsilon-1}),
    $$
    and $\varepsilon > 2\delta$ by definition, it holds that $\chi_{t+1}\xi_t\in\ell^1$. 

    Finally, we need to show the condition $a_t\chi_{t+1}\leq \chi_t$ for some $t\geq T$. 
    For this we follow the proof of Lemma~1 from~\cite{liu2022sure}, which relies on the basic inequality
    $$
        (t+1)^{1-\epsilon}
        \leq t^{1-\epsilon} + (1-\epsilon)t^{-\epsilon}.
    $$
    To see this define $g(x)= x^{1-\epsilon}$ and observe that the derivative $g'(x) = (1-\epsilon)x^{-\epsilon}$ is decreasing.
    Hence, by the mean value theorem, there exists $y\in(t,t+1)$ such that
    $$
        (t+1)^{1-\epsilon} - t^{1-\epsilon}
        = g'(y)\leq g'(t)
        = (1-\epsilon)t^{-\epsilon}.
    $$
    Now observe that $a_t = (1 + \beta t^{2\delta-2} - \alpha t^{\delta-1})$ with $\alpha = 2\PL$ and $\beta = 2\PL\tfrac{L+C_{\mathrm{R}}}{2}(1-\tfrac{1}{n})$.
    We thus know that asymptotically
    \begin{align}
        a_t\chi_{t+1}
        &= (1 + \beta t^{2\delta-2}  -\alpha t^{\delta-1}) (t+1)^{1-\epsilon} \\
        &\leq (1 + \beta t^{2\delta-2} - \alpha t^{\delta-1}) (t^{1-\epsilon} + (1 - \epsilon)t^{-\epsilon} ) \\
        &\asymp (1 - \alpha t^{\delta-1}) t^{1-\epsilon} \\
        &\le t^{1-\epsilon} \\
        &= \chi_t .
    \end{align}
    This means, that there exists $T>0$ such that $a_t\chi_{t+1}\le \chi_t$ for all $t\ge T$.    
    Hence, we obtain $\mathcal{L}(u_t)-\mathcal{L}_{\text{min},\mathcal{M}} \in \mathcal{O}(\chi_t^{-1})= \mathcal{O}(t^{\epsilon-1})$.
\end{proof}

In the best case setting, Corollary~\ref{cor:exponential-convergence} shows that the proposed algorithm exhibits the same exponential rate of convergence as deterministic gradient descent setting for strongly convex functions.
In the worst-case setting, Corollary~\ref{cor:quasi-linear-convergence} guarantees the same algebraic rate of convergence as classical SGD for strongly convex functions.
Notably, the step size sequence still has to satisfy the Robbins--Monro conditions $s\in\ell^2$ and $s\not\in\ell^1$.
However, in contrast to Theorem~\ref{thm:stationary_point}, where the optimal convergence rate is obtained when the step sizes decrease with the slowest rate $s_t \in\mcal{O}(t^{-1/2})$, Corollary~\ref{cor:quasi-linear-convergence} suggests that the rate of convergence is maximised when the step size approaches $\mcal{O}\pars{t^{-1}}$.
This is due to the fact that under assumption~\eqref{eq:C-retraction} Theorem~\ref{thm:rates_abstrac_SPL} guarantees an exponential convergence that is perturbed by the empirical projection and retraction errors.
Decreasing the step size as quickly as possible ensures that the empirical projection error decreases as quickly as possible.
The retraction error $\beta_t$ has to be bounded accordingly.

\begin{remark}
    Combining bounds~\eqref{eq:rate_nonrecovery_BLSCR} and~\eqref{eq:rate_nonrecovery_LSSPLCR} yields
    $$
        \min_{t=1,\ldots,\tau} \{\mcal{L}(u_t) - \mcal{L}_{\mathrm{min},\mcal{M}} \}
        \le \begin{cases}
            \tau^{-1+2\varepsilon}, & \varepsilon\in(0, \tfrac{1}{3}), \\
            \tau^{-\varepsilon}, & \varepsilon\le(\tfrac{1}{3}, \tfrac12) .
        \end{cases}
    $$
    for any step size sequence $s_t := t^{-1+\varepsilon}$ with $\varepsilon\in(0,\tfrac12)$.
\end{remark}

\begin{remark}
    In the almost sure setting, the presented algorithm can achieve near-optimal minimax rates for standard smoothness classes when the quasi-projection with optimal sampling (cf.~Section~\ref{sec:quasi-projection}) is used.
    To see this, consider a Hilbert space $\mcal{V} \subseteq \mcal{H} := L^2(\mcal{X})$ and suppose that there exists a sequence of subspaces $\mcal{V}_d \subseteq \mcal{V}$, indexed by their dimension, such that
    $$
        \norm{u - P_{\mcal{V}_d} u} \lesssim d^{-\alpha} .
    $$
    Moreover, consider the least squares loss $\mcal{L}(v) := \tfrac12 \norm{u-v}^2$ for some $u\in \mcal{V}$ on the model class $\mcal{M} := \mcal{V}$.
    For the linear space $\mcal{T}_t$, choose the $d$-dimensional space $\mcal{V}_{d}$.
    Drawing the sample points optimally and choosing $n_t = d$ constant, means that $c_{\mathrm{var},1} = \frac{2d-1}d \le 2$ and $c_{\mathrm{var},2} = \frac{d}{d} = 1$ and therefore $\sigma_t$ and $a_t$ in  Corollary~\ref{cor:quasi-linear-convergence} only depend on $s_t$.
    This means that, under the assumption $\|(I-P_t)g_t\|\in\ell^\infty$, the corollary can be applied with $s_t \in \mathcal{O}(t^{\delta-1})$ yielding the bound
    \begin{equation}
    \label{eq:sampling_number_crude}
        \norm{u - u_t}^2 \lesssim d^{-2\alpha} + t^{-1+\varepsilon} ,
    \end{equation}
    where $\delta\in(0,\tfrac12)$ and $\varepsilon\in(2\delta, 1)$ can be chosen arbitrarily small.
    For the sake of simplicity, we ignore the $\varepsilon$ and $\delta$ exponents in the following.
    To equilibrate both terms in~\eqref{eq:sampling_number_crude}, we need a number of steps $t = d^{2\alpha}$. 
    The total number of samples after $t$ steps is given by $N_t = tn_t$ with $n_t = d = t^{1/2\alpha}$.
    This means that $N_t = t^{1+1/2\alpha}$ and, consequently, $t^{-1} = N^{-2\alpha/(2\alpha + 1)}$.
    The resulting algorithm thus exhibits the rate
    $$
        \norm{u - u_t}^2 \lesssim N_t^{-2\alpha/(2\alpha+1)} .
    $$
    Concerning the condition, $\|(I-P_t)g_t\|\in\ell^\infty$ we note that $g_t = u_t - u$ and thus $\norm{(I-P_t)g_t} = \norm{u - P_t u} \le \norm{u}$ is uniformly bounded.
    
    For the Sobolev spaces $H^{k}([0,1]^D)$ and the mixed Sobolev spaces $H^{k,\mathrm{mix}}([0,1]^D)$, the resulting rates in $L^2$-norm (ignoring log terms) are
    $$
        N_t^{-2k/(2k + D)}
        \qquad\text{and}\qquad
        N_t^{-2k/(2k+1)},
    $$
    respectively.
    These rates are minimax-optimal \cite{Donoho1998Jun,gine2016mathematical,suzuki2018adaptivity} (ignoring log terms), and it is therefore not necessary to adapt the sample to the current gradient $g$ as discussed in Remark~\ref{rmk:optimal_sampling}.
\end{remark}

\subsubsection{Almost sure convergence under assumption~\eqref{eq:mu-PL}}
\label{section:almost_sure_PL}

Although Theorem~\ref{thm:rates_abstrac_SPL} ensures almost sure convergence, the strong assumption~\eqref{eq:mu-PL_strong} may not be easy to verify in practice since the condition depends on the model class $\mathcal{M}$.
Hence, we use this section to briefly discuss almost sure convergence under the weaker~\eqref{eq:mu-PL} assumption.

\begin{theorem}
\label{thm:descent_mu-PL}
    Assume that the loss function satisfies assumptions~\eqref{eq:lower_bound},~\eqref{eq:L-smooth},~\eqref{eq:mu-PL} and~\eqref{eq:C-retraction} with $\beta_t\equiv 0$.
    Assume further that for the current step $t\ge0$, the $x_1, \ldots, x_n$ are i.i.d.\ samples from $\mu_t$ given $\mcal{F}_t$. 
    Define the constants
    $$
        \kappa_t :=  \frac{\norm{g_t}^2}{\norm{P_tg_t}^2},
        \qquad
        \tilde\sigma_t := c_{\mathrm{bias},1} s_t - s_t^2 \tfrac{L+C_{\mathrm{R}}}{2} \pars{c_{\mathrm{var},1} + c_{\mathrm{var},2}(\kappa_t - 1)}
        \qquad\text{and}\qquad
        a_t := 1 - 2\PL \tfrac{\tilde\sigma_t}{\kappa_t}
        .
    $$
    If $\kappa_t\in[1,\infty)$ is finite and $\tilde{\sigma}_t > 0$, then
    $$
        \mbb{E}\bracs*{\mcal{L}(u_{t+1}) - \mcal{L}_{\mathrm{min}}\,|\,\mcal{F}_t} \le a_t \pars*{\mcal{L}(u_t) - \mcal{L}_{\mathrm{min}}}.
    $$
    Moreover, if $a_t \le \bar{a} < b < 1$ for all $t$, 
    then the descent scheme~\eqref{eq:descent_scheme} converges almost surely with
    $$
        \mcal{L}\pars{u_t} - \mcal{L}_{\mathrm{min}}
        \lesssim b^{t} .
    $$
\end{theorem}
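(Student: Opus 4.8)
The plan is to reduce one step of the scheme to the one‑step descent inequality of Theorem~\ref{thm:descent_quasi_projection}, turn it into a linear contraction of $\mcal{L}(u_t)-\mcal{L}_{\mathrm{min}}$ by invoking the Polyak--\L{}ojasiewicz inequality~\eqref{eq:mu-PL}, and finally upgrade this to an almost sure exponential rate via the Robbins--Siegmund Proposition~\ref{prop:RobbinsSiegmund}, exactly in the spirit of the proof of Corollary~\ref{cor:exponential-convergence}.

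First I would specialise Theorem~\ref{thm:descent_quasi_projection} to the present situation, where $c_{\mathrm{bias},2}=0$ (the standing assumption of this section) and $\beta_t\equiv 0$, so that
$$
    \mbb{E}\bracs*{\mcal{L}(u_{t+1})\mid\mcal{F}_t}
    \le \mcal{L}(u_t)
    - \pars*{c_{\mathrm{bias},1}s_t - s_t^2\tfrac{L+C_{\mathrm{R}}}{2}c_{\mathrm{var},1}}\norm{P_tg_t}^2
    + s_t^2\tfrac{L+C_{\mathrm{R}}}{2}c_{\mathrm{var},2}\norm{(I-P_t)g_t}^2 .
$$
Using the orthogonal decomposition $\norm{(I-P_t)g_t}^2 = \norm{g_t}^2 - \norm{P_tg_t}^2 = (\kappa_t-1)\norm{P_tg_t}^2$, the two gradient terms combine into $-\tilde\sigma_t\norm{P_tg_t}^2$, giving $\mbb{E}\bracs*{\mcal{L}(u_{t+1})\mid\mcal{F}_t}\le\mcal{L}(u_t)-\tilde\sigma_t\norm{P_tg_t}^2$. (If $g_t=0$ then~\eqref{eq:mu-PL} forces $\mcal{L}(u_t)=\mcal{L}_{\mathrm{min}}$ and, since the (quasi‑)projection estimator is linear, the iterate moves only through the retraction, which by~\eqref{eq:C-retraction} with $g=0$ keeps the loss at $\mcal{L}_{\mathrm{min}}$; so we may assume $g_t\neq 0$, and finiteness of $\kappa_t$ then means $P_tg_t\neq 0$.)

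Next, using $\tilde\sigma_t>0$ together with $\norm{P_tg_t}^2 = \norm{g_t}^2/\kappa_t \ge \tfrac{2\PL}{\kappa_t}\pars*{\mcal{L}(u_t)-\mcal{L}_{\mathrm{min}}}$ from~\eqref{eq:mu-PL}, and subtracting $\mcal{L}_{\mathrm{min}}$, I obtain
$$
    \mbb{E}\bracs*{\mcal{L}(u_{t+1})-\mcal{L}_{\mathrm{min}}\mid\mcal{F}_t}
    \le \pars*{1 - \tfrac{2\PL\tilde\sigma_t}{\kappa_t}}\pars*{\mcal{L}(u_t)-\mcal{L}_{\mathrm{min}}}
    = a_t\pars*{\mcal{L}(u_t)-\mcal{L}_{\mathrm{min}}} ,
$$
which is the first claim. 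For the second claim, put $Y_t := \mcal{L}(u_t)-\mcal{L}_{\mathrm{min}}\ge 0$ and $\chi_t := b^{-t}$. Because $a_t\le\bar a<b$, we have $a_t\chi_{t+1}\le\chi_t$, hence $\mbb{E}\bracs*{\chi_{t+1}Y_{t+1}\mid\mcal{F}_t}\le\chi_{t+1}a_tY_t\le\chi_tY_t$, so $(\chi_tY_t)_t$ is a nonnegative supermartingale. Applying Proposition~\ref{prop:RobbinsSiegmund} with $X_t\equiv Z_t\equiv 0$ and $\gamma_t\equiv 0$ (so that $\prod_t(1+\gamma_t)=1<\infty$) shows that $\chi_tY_t$ converges almost surely, hence is almost surely bounded by a finite random constant; multiplying by $\chi_t^{-1}=b^t$ yields $\mcal{L}(u_t)-\mcal{L}_{\mathrm{min}}\lesssim b^t$ almost surely.

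This argument is largely mechanical once Theorem~\ref{thm:descent_quasi_projection} is available; there is no single hard step, but the points deserving attention are (i) that we work in the unbiased regime $c_{\mathrm{bias},2}=0$, so the bias cross‑term vanishes and Theorem~\ref{thm:descent_quasi_projection} collapses as above; (ii) the bookkeeping of the degenerate case $g_t=0$ and of the finiteness of $\kappa_t$, which is needed for the decomposition of $\norm{(I-P_t)g_t}^2$ to be meaningful; and (iii) the role of the strict gap $\bar a<b$, which is exactly what makes the deterministic weights $\chi_t=b^{-t}$ compatible with the supermartingale inequality $a_t\chi_{t+1}\le\chi_t$, so that a uniform exponential envelope survives even though $a_t$ is only bounded above, not constant.
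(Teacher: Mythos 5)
Your proposal is correct and follows essentially the same route as the paper: specialise Theorem~\ref{thm:descent_quasi_projection}, substitute the orthogonal decomposition via $\kappa_t$, apply~\eqref{eq:mu-PL}, and then feed the weighted sequence $b^{-t}\pars{\mcal{L}(u_t)-\mcal{L}_{\mathrm{min}}}$ into Proposition~\ref{prop:RobbinsSiegmund}. The only (immaterial) difference is in the last step, where the paper extracts summability of $b^{-t}Y_t$ and invokes the weighted Stechkin inequality, while you simply use almost sure convergence, hence boundedness, of the supermartingale $b^{-t}Y_t$ — which is, if anything, slightly more direct.
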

\begin{proof}
    Using Theorem~\ref{thm:descent_quasi_projection} with $c_{\mathrm{bias},2}=0$ and $\beta_t \equiv 0$ and subtracting $\mathcal{L}_{\mathrm{min}}$ we obtain
    \begin{alignat}{2}
        \mbb{E}\bracs{\mcal{L}\pars{u_{t+1}} - \mcal{L}_{\mathrm{min}}\,|\, \mcal{F}_t}
        &\le (\mcal{L}\pars{u_t} - \mcal{L}_{\mathrm{min}})
        &&- \pars{c_{\mathrm{bias},1} s_t - s_t^2 \tfrac{L+C_{\mathrm{R}}}{2}c_{\mathrm{var},1}} \norm{P_tg_t}^2 \\
        &&&+ s_t^2 \tfrac{L+C_{\mathrm{R}}}{2} c_{\mathrm{var},2}\norm{(I-P_t)g_t}^2 .
    \end{alignat}
    Substituting $\norm{(I - P_t) g_t}^2 = \norm{g_t}^2 - \norm{P_t g_t}^2 = (\kappa_t - 1)\norm{P_t g_t}^2$ and $\norm{P_t g_t}^2 = \frac{1}{\kappa_t}\norm{g_t}^2$, we obtain
    \begin{alignat}{2}
        \mbb{E}\bracs{\mcal{L}\pars{u_{t+1}} - \mcal{L}_{\mathrm{min}}\,|\, \mcal{F}_t}
        &\le (\mcal{L}\pars{u_t} - \mcal{L}_{\mathrm{min}})
        &&- \pars{c_{\mathrm{bias},1} s_t - s_t^2 \tfrac{L+C_{\mathrm{R}}}{2}c_{\mathrm{var},1}} \tfrac{1}{\kappa_t} \norm{g_t}^2 \\
        &&&+ s_t^2 \tfrac{L+C_{\mathrm{R}}}{2} c_{\mathrm{var},2} \tfrac{\kappa_t - 1}{\kappa_t} \norm{g_t}^2 \\
        &\le (\mcal{L}\pars{u_t} - \mcal{L}_{\mathrm{min}})
        &&- \pars*{c_{\mathrm{bias},1} s_t - s_t^2 \tfrac{L+C_{\mathrm{R}}}{2} \pars{c_{\mathrm{var},1} + c_{\mathrm{var},2}(\kappa_t - 1)}} \tfrac1{\kappa_t} \norm{g_t}^2 \\ 
        &\le (\mcal{L}\pars{u_t} - \mcal{L}_{\mathrm{min}}) &&- \tfrac{\tilde{\sigma}_t}{\kappa_t} \norm{g_t}^2 . 
    \end{alignat}
    Finally, employing assumption~\eqref{eq:mu-PL} yields the result
    \begin{align*}
        \mbb{E}\bracs{\mcal{L}\pars{u_{t+1}} - \mcal{L}_{\mathrm{min}}\,|\, \mcal{F}_t} 
        &\le \pars{\mcal{L}\pars{u_t} - \mcal{L}_{\mathrm{min}}}
        - \frac{\tilde\sigma_t}{\kappa_t}\norm{g_t}^2 \\
        &\le \pars{\mcal{L}\pars{u_t} - \mcal{L}_{\mathrm{min}}}
        - 2\PL\frac{\tilde\sigma_t}{\kappa_t}\pars{\mcal{L}\pars{u_t} - \mcal{L}_{\mathrm{min}}} \\
        &= a_t (\mcal{L}\pars{u_{t+1}} - \mcal{L}_{\mathrm{min}})
        .
    \end{align*}%
    To prove the assertion about almost sure exponential convergence, we lean heavily on the proof of Lemma~1 from~\cite{liu2022sure}.
    We define $Y_t := \mcal{L}\pars{u_t} - \mcal{L}_{\mathrm{min}}$ and recall that
    $$
        \mbb{E}\bracs*{Y_{t+1} \,\big|\, \mcal{F}_t} \le a Y_t .
    $$
    Now define $c := \frac{a}{b} \in\pars{0,1}$ and $\hat{Y}_t := b^{-t}Y_t$ and observe that
    $$
        \mbb{E}\bracs*{b^{-(t+1)} Y_{t+1} \,\big|\, \mcal{F}_t}
        \le b^{-(t+1)} a Y_t 
        = c b^{-t} Y_t
        \qquad\Leftrightarrow\qquad
        \mbb{E}\bracs*{\hat{Y}_{t+1} \,\big|\, \mcal{F}_t}
        \le c \hat Y_t
        = \hat Y_t - (1 - c)\hat Y_t .
    $$
    By Proposition~\ref{prop:RobbinsSiegmund}, it holds that $\pars{1-c}\sum_{t=1}^\infty \hat{Y}_t < \infty$ a.s.\ and hence $\pars{Y_t}_{t\in\mbb{N}}\in \ell^1_{\gamma}$ with $\gamma_t := b^{-t}$.
    The weighted Stechkin inequality~\cite[Lemma~15]{trunschke2023weighted} hence implies $Y_t \lesssim \gamma_t^{-1} = b^{t}$.
\end{proof}

\begin{remark}
   The preceding theorem could be generalised to $\beta_t\ne 0$ in which case $\mcal{L}(u_t) - \mcal{L}_{\mathrm{min}} \lesssim \max\braces{b^t, \beta_t t^\varepsilon}$ for arbitrarily small $\varepsilon$.
\end{remark}

The exponential convergence in Theorem~\ref{thm:descent_mu-PL} relies on the uniform bound $a_t \le \bar{a} < 1$, which induces constraints on the step size $s_t$ and the value of $\kappa_t$.
An easy algebraic manipulation reveals that these constraints can only be satisfied when
$$
    \kappa_t \le \bar\kappa < \infty
$$
is uniformly bounded.
For a fixed point $u_t$, the value of $\kappa_t$ is a deterministic quantity measuring how close $u_t$ lies to a suboptimal stationary point.
The assumption $\kappa_t \le \bar\kappa < \infty$ is hence a deterministic condition on the model class $\mcal{M}$ and the loss $\mcal{L}$ and implies that we can not get stuck in a stationary point in $\mcal{M}$ that is not also stationary in $\mcal{H}$.
Conversely, the fact that the condition $\kappa_t\le\bar\kappa$ prohibits convergence to suboptimal local minima also implies that it can only be satisfied for a short sequence of steps $t_0\le t\le t_1$ or in the recovery setting, i.e.\ when $\norm{(I-P_t)g_t} \to 0$.
In fact, Lemma~\ref{lemma:kappa_bounded_reach} provides sufficient conditions for $\kappa_t$ to remain bounded in the least squares recovery setting.

\begin{remark}
\label{rmk:adaptivity}
    Since the rate of convergence depends on the ratio $\kappa_t = \frac{\norm{g_t}^2}{\norm{P_t g_t}^2}$, it may be favourable to enlarge the model class when $\kappa_t$ becomes too large.
    \revision[0]{This idea was recently explored in~\cite{dahmen_expansive}.}
    However, a deeper discussion lies outside the scope of this manuscript.
\end{remark}

\subsection{Almost sure convergence analysis in the biased case}
\label{section:almost_biased}

The behaviour of the algorithm in the biased case can be made precise with the notion of an accumulation set, which is defined analogously to an accumulation point.
\begin{definition}
    A subset $A\subseteq M$ of a metric space $(M, d)$ is called accumulation set of the sequence $x\in M^{\mbb{N}}$, if for all $\varepsilon>0$ and $T\in\mbb{N}$ there exists a $t\ge T$ such that $d(x_t, A) \le \varepsilon$.
\end{definition}
\begin{definition}
\label{def:c_critical_point}
\revision[0]{
    A point $u_t\in\mcal{M}$ is called $c$-quasi-critical point of $\mcal{L}$ in $\mcal{M}$ with associated $\mathcal{T}_t$, if $\norm{P_t\nabla\mcal{L}(u_t)} \le c \norm{(I-P_t)\nabla\mcal{L}(u_t)}$.
    }
\end{definition}

Note that if \revision[0]{$P_t$} is the projection onto the tangent space at \revision[0]{$u_t$}, then a $0$-quasi-critical point is a critical point in the classical sense.

\begin{theorem}
    Assume that the loss function satisfies assumptions~\eqref{eq:lower_bound},~\eqref{eq:L-smooth} and~\eqref{eq:C-retraction} and that the projection estimate satisfies assumption~\eqref{eq:bbv}.
    Moreover, suppose that the model class $\mcal{M}$ is sufficiently regular, such that \revision[0]{$u_t\mapsto P_t$} is continuous, that the cost of the initial guess $\mcal{L}(u_0) \le B$ is almost surely finite, that the sequence of perturbations satisfies $(\beta_t)_{t\in\mbb{N}}\in \ell^1$ and that the step size sequence satisfies the Robbins--Monro condition $(s_t)_{t\in\mbb{N}}\in \ell^2\setminus\ell^1$ and that the gradients $\norm{g_t} \le G$ remain uniformly bounded.
    Then, for every $\varepsilon\in(0, 1)$, the set of $\tfrac{c_{\mathrm{bias},2}}{(1-\varepsilon)c_{\mathrm{bias},1}}$-quasi-critical points is almost surely an accumulation set of the iterates.
\end{theorem}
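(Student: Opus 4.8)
The plan is a contradiction argument at the level of the trajectory: writing $A_\varepsilon$ for the set of $c$-quasi-critical points with $c := \tfrac{c_{\mathrm{bias},2}}{(1-\varepsilon)c_{\mathrm{bias},1}}$, I will show that for each fixed $T\in\mathbb{N}$ and $\delta>0$ the event $\{\operatorname{dist}(u_t,A_\varepsilon)\ge\delta \text{ for all } t\ge T\}$ is null, and then take a countable intersection over $T\in\mathbb{N}$ and $\delta\in\mathbb{Q}_{>0}$. The engine is the one-step bound of Theorem~\ref{thm:descent_quasi_projection}. Setting $g_t:=\nabla\mathcal{L}(u_t)$, $D_t:=c_{\mathrm{bias},1}\norm{P_tg_t}^2-c_{\mathrm{bias},2}\norm{P_tg_t}\,\norm{(I-P_t)g_t}$ and using $\norm{g_t}\le G$ to collapse both variance contributions into a single $\mathcal{F}_t$-measurable term $C_1s_t^2$ with $C_1:=\tfrac{L+C_{\mathrm{R}}}{2}(c_{\mathrm{var},1}+c_{\mathrm{var},2})G^2$, the theorem reads $\mathbb{E}[\mathcal{L}(u_{t+1})\mid\mathcal{F}_t]\le\mathcal{L}(u_t)-s_tD_t+C_1s_t^2+\beta_t$. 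Since $D_t\ge-c_{\mathrm{bias},2}G^2$ and $\mathcal{L}(u_0)\le B$ is almost surely finite, an induction along this inequality shows that every $\mathcal{L}(u_t)$ is almost surely finite. The decisive elementary fact is that whenever $u_t\notin A_\varepsilon$, i.e.\ $\norm{P_tg_t}>c\,\norm{(I-P_t)g_t}$, one has $c_{\mathrm{bias},2}\norm{(I-P_t)g_t}<(1-\varepsilon)c_{\mathrm{bias},1}\norm{P_tg_t}$ and therefore $D_t\ge\varepsilon c_{\mathrm{bias},1}\norm{P_tg_t}^2\ge 0$.

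For fixed $T$ and $\delta$ I would introduce the stopping time $\tau:=\inf\{t\ge T:\operatorname{dist}(u_t,A_\varepsilon)<\delta\}$ (measurable since each $u_t$ is $\mathcal{F}_t$-measurable) and apply the Robbins--Siegmund Proposition~\ref{prop:RobbinsSiegmund} to the stopped process $Y_t:=\mathcal{L}(u_{t\wedge\tau})-\mathcal{L}_{\mathrm{min},\mathcal{M}}$, which is non-negative by~\eqref{eq:lower_bound}. On $\{t<\tau\}$ with $t\ge T$ the iterate $u_t$ lies outside $A_\varepsilon$, so the one-step bound together with the elementary fact gives $\mathbb{E}[Y_{t+1}\mid\mathcal{F}_t]\le Y_t-X_t+Z_t$ with $X_t:=\varepsilon c_{\mathrm{bias},1}s_t\norm{P_tg_t}^2\mathbb{1}_{\{T\le t<\tau\}}\ge 0$ and $Z_t:=(C_1s_t^2+\beta_t)\mathbb{1}_{\{t\ge T\}}$; for the finitely many indices $t<T$ one instead sets $X_t:=0$ and $Z_t:=s_t\abs{D_t}+C_1s_t^2+\beta_t$, each term being almost surely finite because $\norm{g_t}\le G$. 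The choice $\gamma_t\equiv 0$ is admissible, and $(Z_t)\in\ell^1$ almost surely because $(s_t)\in\ell^2$ and $(\beta_t)\in\ell^1$ almost surely. The Proposition then yields, almost surely, $\sum_t s_t\norm{P_tg_t}^2\mathbb{1}_{\{t<\tau\}}<\infty$ and convergence of $Y_t$.

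It remains to contradict $\{\tau=\infty\}$. On that event $\mathbb{1}_{\{t<\tau\}}=1$ for every $t\ge T$, so $\sum_{t\ge T}s_t\norm{P_tg_t}^2<\infty$; moreover $\mathcal{L}(u_t)=Y_t+\mathcal{L}_{\mathrm{min},\mathcal{M}}$ converges, hence $\sup_t\mathcal{L}(u_t)<\infty$ and the iterates stay in a (random) sublevel set $K$ of $\mathcal{L}|_{\mathcal{M}}$, which the regularity hypothesis makes compact. Under the same hypothesis $u\mapsto\norm{P_u\nabla\mathcal{L}(u)}$ is continuous, and it vanishes only inside $A_\varepsilon$ (a zero forces $0\le c\,\norm{(I-P_u)\nabla\mathcal{L}(u)}$, i.e.\ $u\in A_\varepsilon$), so it attains a value $m>0$ as its minimum over the compact set $K\cap\{\operatorname{dist}(\cdot,A_\varepsilon)\ge\delta\}$. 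Since $\operatorname{dist}(u_t,A_\varepsilon)\ge\delta$ for all $t\ge T$ on $\{\tau=\infty\}$, this forces $\norm{P_tg_t}\ge m$ and hence $\sum_{t\ge T}s_t\norm{P_tg_t}^2\ge m^2\sum_{t\ge T}s_t=\infty$ because $(s_t)\notin\ell^1$ almost surely, contradicting the summability just obtained. Therefore $\mathbb{P}(\tau=\infty)=0$, i.e.\ almost surely some iterate of index at least $T$ enters the $\delta$-ball around $A_\varepsilon$; intersecting over all $T\in\mathbb{N}$ and all rational $\delta>0$ shows that $A_\varepsilon$ is almost surely an accumulation set of $(u_t)$.

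The step I expect to be the main obstacle is precisely the passage from ``$u_t$ stays $\delta$-away from $A_\varepsilon$'' to ``$\norm{P_tg_t}$ is bounded below by a positive constant''. This is where the otherwise qualitative ``sufficiently regular'' hypothesis must be pinned down to something usable: one needs compactness of the sublevel sets of $\mathcal{L}|_{\mathcal{M}}$ (to localise the iterates, using convergence of $Y_t$) together with continuity of $u\mapsto(P_u,\nabla\mathcal{L}(u))$ (so that $\norm{P_u\nabla\mathcal{L}(u)}$ is continuous and its zero set is contained in $A_\varepsilon$). Once that is granted, the one-step inequality, the sign analysis of $D_t$, and the stopped Robbins--Siegmund argument are all routine.
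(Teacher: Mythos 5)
Your proposal is correct (up to the reading of ``sufficiently regular'' as granting the compactness you flag yourself, a point on which the paper's own argument leans just as heavily), but it follows a genuinely different route. The paper argues directly with $c=\tfrac{c_{\mathrm{bias},2}}{c_{\mathrm{bias},1}}$ and never actually uses the $\varepsilon$-slack: either $c_{\mathrm{bias},1}\norm{P_tg_t}-c_{\mathrm{bias},2}\norm{(I-P_t)g_t}\le 0$ infinitely often, in which case the iterates themselves are $c$-quasi-critical infinitely often; or this quantity is eventually positive, in which case Proposition~\ref{prop:RobbinsSiegmund} applied to the unstopped process gives $\ell^1$-summability of $X_t=s_t\pars{c_{\mathrm{bias},1}\norm{P_tg_t}-c_{\mathrm{bias},2}\norm{(I-P_t)g_t}}\norm{P_tg_t}$, after which the paper splits again according to whether $\norm{P_tg_t}$ accumulates at zero and invokes the closed map lemma in both subcases to place a quasi-critical point in the closure of the trajectory. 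You instead exploit $\varepsilon$ quantitatively -- outside $A_\varepsilon$ the descent coefficient satisfies $c_{\mathrm{bias},1}\norm{P_tg_t}^2-c_{\mathrm{bias},2}\norm{P_tg_t}\norm{(I-P_t)g_t}\ge\varepsilon c_{\mathrm{bias},1}\norm{P_tg_t}^2$ -- and run a single stopping-time contradiction, replacing the paper's two-level case analysis by a uniform positive lower bound on $\norm{P_u\nabla\mcal{L}(u)}$ over the compact set of points at distance at least $\delta$ from $A_\varepsilon$ (which absorbs the ``projected gradient vanishes'' subcase, since such points lie in $A_\varepsilon$). Your route is structurally cleaner and is the only one of the two that explains why the factor $(1-\varepsilon)$ appears in the statement, at the price of requiring compactness of sublevel sets of $\mcal{L}|_{\mcal{M}}$; the paper's route needs the closed map lemma and the equally unstated compactness of $\mcal{M}\cap h^{-1}([0,G])$, but in exchange proves accumulation at the smaller set of $\tfrac{c_{\mathrm{bias},2}}{c_{\mathrm{bias},1}}$-quasi-critical points.
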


\begin{proof}
For the sake of simplicity in notation, we define
$
    c:= \tfrac{c_{\mathrm{bias},2}}{c_{\mathrm{bias},1}} .$
Recall that a point $u\in\mcal{M}$ is called a $c$-quasi-critical if \revision[0]{$\norm{P_t\nabla\mcal{L}(u)} \le c \norm{(I-P_t)\nabla\mcal{L}(u)}$}.
Under the given conditions, Theorem~\ref{thm:descent_quasi_projection} guarantees that
\begin{alignat}{2}
    \mbb{E}\bracs{\mcal{L}\pars{u_{t+1}}\,|\, \mcal{F}_0}
    &\le \mcal{L}\pars{u_t}
    &&- s_t \pars*{c_{\mathrm{bias},1} \norm{P_tg_t} -  c_{\mathrm{bias},2} \norm{(I-P_t)g_t}} \norm{P_tg_t} \\
    &&&+ s_t^2 \tfrac{L+C_{\mathrm{R}}}{2} \pars*{c_{\mathrm{var},1} \norm{P_tg_t}^2 + c_{\mathrm{var},2}\norm{(I-P_t)g_t}^2} \\
    &&&+ \beta_t .
\end{alignat}
Now suppose that $c_{\mathrm{bias},1} \norm{P_tg_t} -  c_{\mathrm{bias},2} \norm{(I-P_t)g_t} \le 0$ for infinitely many $t\in\mbb{N}$.
This means that $\norm{P_tg_t} \le c\norm{(I-P_t)g_t}$ for infinitely many $t\in\mbb{N}$ and thus that the set of $c$-quasi-critical points is an accumulation set of the iterates.
If this is not the case, there exists a $T\in\mbb{N}$ such that $c_{\mathrm{bias},1} \norm{P_tg_t} -  c_{\mathrm{bias},2} \norm{(I-P_t)g_t} > 0$ for all $t\ge T$.
We can thus define the non-negative stochastic processes
\begin{align}
    Y_t
    &:= \mcal{L}(u_t), \\
    X_t
    &:= s_t \pars*{c_{\mathrm{bias},1} \norm{P_tg_t} -  c_{\mathrm{bias},2} \norm{(I-P_t)g_t}} \norm{P_tg_t} \\
    Z_t
    &:= s_t^2 \tfrac{L+C_{\mathrm{R}}}{2} \pars*{c_{\mathrm{var},1} \norm{P_tg_t}^2 + c_{\mathrm{var},2}\norm{(I-P_t)g_t}^2} + \beta_t
\end{align}
for all $t\ge T$.
Since $\norm{g_t} \le G$ and $(s_t)\in\ell^2$, we can conclude that $(Z_t)\in\ell^1$ almost surely and Proposition~\ref{prop:RobbinsSiegmund} implies $(X_t)\in\ell^1$ almost surely.

We now distinguish two cases.
In the first case, where $0$ is an accumulation point of $P_t g_t$,
a subsequence of iterates converges to a classical critical point.
We can write this as $h(u_t)\to 0$ with \revision[0]{$h(u_t) := \norm{P_t g_t}$.}
Since \revision[0]{$u_t\mapsto P_t$} is continuous by assumption, the map $h$ is a composition of continuous maps and, therefore, continuous.
Moreover, since $h(u_t) \le G$ by assumption we can conclude that all $u_t$ are elements of the compact set $\bar{\mcal{M}} := \mcal{M}\cap h^{-1}([0, G])$.
Since $h : \bar{\mcal{M}} \to \mbb{R}$ is a continuous function from a compact set to a Haussdorf space, the closed map lemma implies that $h$ is closed.
This means that $\overline{h(V)} \subseteq h(\overline{V})$ for every $V\subseteq\bar{\mcal{M}}$.
For the particular choice $V = \braces{u_t : t\in\mbb{N}}$ we conclude that $0 \in h(\overline{V})$ and therefore $0\in\overline{V}$.
This proves that $0$ is an accumulation point of the sequence of iterates $u_t$.

In the second case, there exists a lower bound $\gamma>0$ such that $\norm{P_tg_t} \ge \gamma$ for all but finitely many $t\in\mbb{N}$.
Since also
$$
    c_{\mathrm{bias},1}\norm{P_tg_t} - c_{\mathrm{bias},2} \norm{(I-P_t)g_t} > 0
$$
for all but finitely many $t\in\mbb{N}$, the $\ell^1$-summability of $(X_t)$ implies that 
$$
    \sum_{t\in\mbb{N}} s_t \pars*{c_{\mathrm{bias},1}\norm{P_tg_t} - c_{\mathrm{bias},2} \norm{(I-P_t)g_t}}
    < \infty .
$$
Because $(s_t)\not\in\ell^1$, this means that $\norm{P_tg_t} - c \norm{(I-P_t)g_t} \to 0$.
To conclude that the set of $c$-quasi-critical points is an accumulation set of the iterates $u_t$, we proceed similarly to the first case and define the function \revision[0]{$f(u_t) := \tfrac{\norm{(I-P_t)g_t}}{\norm{P_t g_t}}$.}
Since $\gamma \le h(u_t) \le G$ by assumption, we can define the compact set $\bar{\mcal{M}} := \mcal{M} \cap h^{-1}([\gamma, G])$ and note that $f : \bar{\mcal{M}} \to \mbb{R}$ is a continuous function.
The close map lemma implies that $f:\bar{\mcal{M}} \to \mbb{R}$ is closed.
This means that $\overline{f(V)} \subseteq f(\overline{V})$ for $V = \braces{u_t : t\in\mbb{N}}$.
Since $f(u_t) \to c^{-1}$ by construction, we conclude that $c^{-1} \in f(\overline{V})$.
To conclude the proof, note that a point $v\in\bar{\mcal{M}}$ is $c$-quasi-critical if $f(v) \ge c^{-1}$.
Since $c^{-1} \in f(\overline{V})$ there must exist a $c$-quasi-critical point $v\in\overline{V}$.
This proves that the set of $c$-quasi-critical points is an accumulation set of the sequence of iterates.

\end{proof}

\section{Examples}
\label{sec:examples}

Consider the Hilbert space $\mcal{H} := L^2(\rho)$, where $\rho$ is either the uniform measure $\rho=\mcal{U}(\mcal{X})$ on the interval $\mcal{X} = [-1, 1]$ or the standard Gaussian measure $\mcal{N}(0, 1)$ on $\mbb{R}$.
We want to minimise the least squares loss
$$
    \mathcal{L}\pars{v} := \tfrac12\norm{u-v}^2
$$
for some $u\in\mcal{H}$, \revision[0]{defined differently in each subsection}.
For this loss,  $\mcal{L}_{\mathrm{min}} = 0$, the Lipschitz smoothness constant is $L=1$ and the (weak) Polyak-\L{}ojasiewicz constant is $\PL = 1$.
\revision[0]{We compare the proposed method for different projectors from section~\ref{sec:setting} evaluating the impact of the optimal sampling methods from appendix~\ref{app:optimal_sampling}.}

\vspace{1em}
\noindent
Related code to reproduce the numerics can be found at \url{https://github.com/ptrunschke/ossgd}.

\subsection{Linear approximation on compact domains}
\label{sec:numerics_lin_bounded}

This subsection considers the uniform distribution on $\mcal{X} =[-1,1]$.
We aim to approximate the target function $u(x) = \exp(x)$ on the linear space $\mcal{V}$ of polynomials of degree less than $3$ and choose for $\mcal{B}_t$   the first $3$ Legendre polynomials.
We choose $\mcal{M} = \mcal{V}$ and $\mcal{T}_t \equiv \mcal{V}$ as well as $R_t(v) := v$.
In this setting, $\mcal{L}_{\mathrm{min},\mcal{M}} \approx 3.6\cdot 10^{-4}$, the strong Polyak-\L{}ojasiewicz property holds with $\PL = 1$ and the retraction error is controlled with $C_{\mathrm{R}}=0$ and $\beta=0$.

Although it does not make much sense to consider iterative algorithms in a linear setting because we could solve the least-squares problem directly,
we use this simple setting to 
showcase the theoretical predictions and
compare the behaviour of different sampling strategies and projection operators without the distraction of local critical points of non-linear model classes.
The plots in this section show $\mcal{L}(u_t) - \mcal{L}_{\mathrm{min}, \mcal{M}}$.

\subsubsection{Sublinear convergence for unbiased projections}

For the empirical operator $P_t^n$, we choose the unbiased quasi-projection defined in~\eqref{eq:quasi-projection}.
The sublinear rates of convergence that are predicted in Corollary~\ref{cor:quasi-linear-convergence} are displayed for $n=1$ and $s_t := \tfrac29 t^{\varepsilon-1}$ with $\varepsilon=0.1$ in Figure~\ref{fig:sublinear_convergence}.
We observe a faster pre-asymptotic rate in the case of optimal sampling (Figure~\ref{fig:sublinear_convergence_optimal}), transitioning to the asymptotic sublinear rate at $t = 10^3$.
In the light of Corollary~\ref{corollary:exponential_conv_expectation} this is to be expected for the first few steps, where $a_t \le a_*$ for some $a_* < 1$.
\begin{figure}[htb]
    \centering
    \begin{subfigure}[b]{0.49\textwidth}
        \centering
        \includegraphics[width=\textwidth]{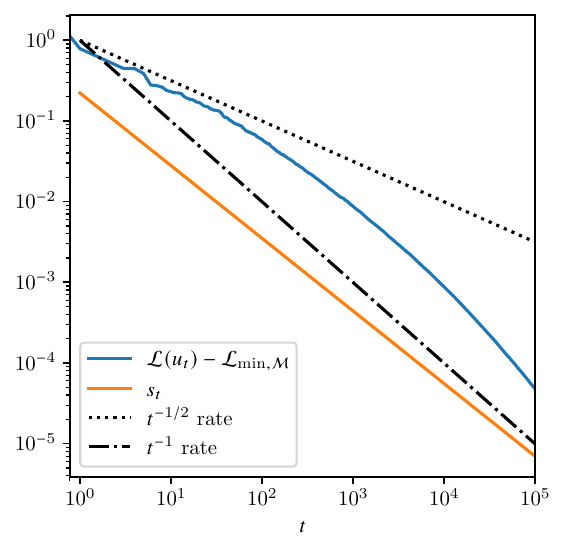}
        \caption{Uniform sampling}
        \label{fig:sublinear_convergence_uniform}
    \end{subfigure}
    \hfill
    \begin{subfigure}[b]{0.49\textwidth}
        \centering
        \includegraphics[width=\textwidth]{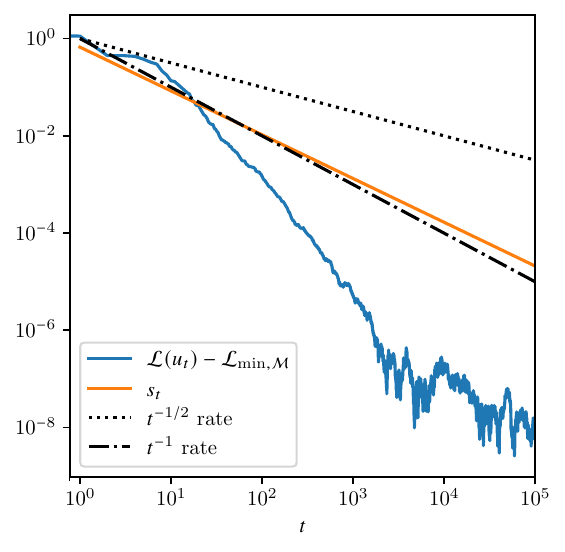}
        \caption{Optimal sampling}
        \label{fig:sublinear_convergence_optimal}
    \end{subfigure}
    \caption{Loss error $\mcal{L}(u_t) - \mcal{L}_{\mathrm{min}, \mcal{M}}$, plotted against the number of steps }
    \label{fig:sublinear_convergence}
\end{figure}

\subsubsection{Exponential convergence for unbiased projections}

We consider again the unbiased quasi-projection defined in~\eqref{eq:quasi-projection}.
Corollary~\ref{cor:convergence_up_to_precision} guarantees that choosing the step size optimally yields exponential convergence up to a fixed precision (in expectation).
Figure~\ref{fig:convergence_up_to_precision} demonstrates this for the choice $n=1$ and the resulting step size $s_t \equiv \tfrac19$.
We can see a stagnation of the loss in the regime of $\mathcal{L}_{\mathrm{min},\mathcal{M}}$ originating from the use of a constant step size.
Since the orthogonal complement $\norm{(I-P_t)g_t}$ does not vanish, neither Corollary~\ref{cor:exponential-convergence} nor Corollary~\ref{cor:quasi-linear-convergence} apply, and we can not expect convergence.

\begin{figure}[htb]
    \centering
    \begin{subfigure}[b]{0.49\textwidth}
        \centering
        \includegraphics[width=\textwidth]{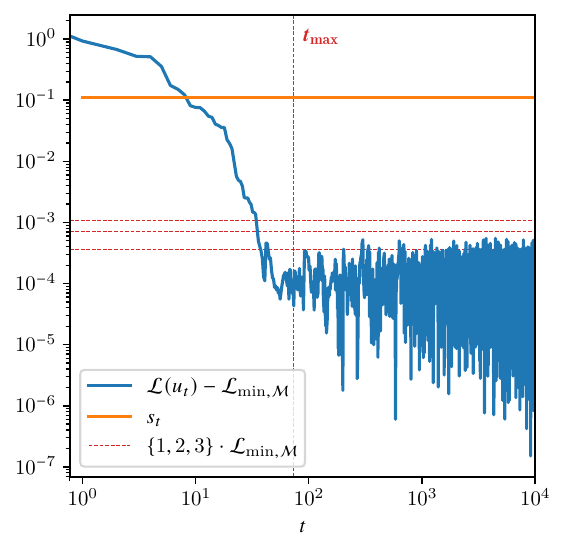}
        \caption{Uniform sampling}
        \label{fig:convergence_up_to_precision_uniform}
    \end{subfigure}
    \hfill
    \begin{subfigure}[b]{0.49\textwidth}
        \centering
        \includegraphics[width=\textwidth]{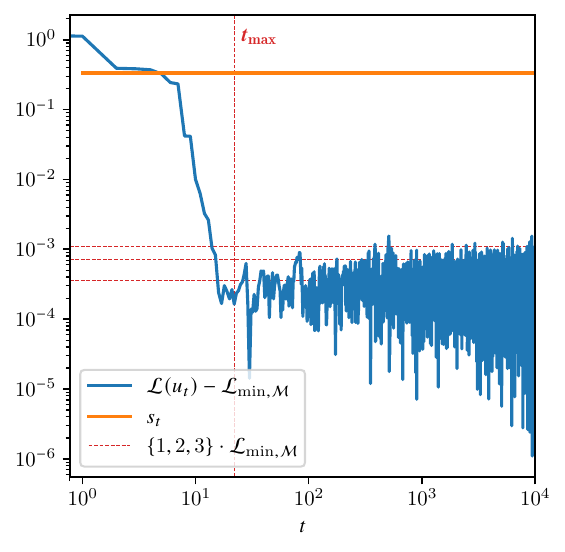}
        \caption{Optimal sampling}
        \label{fig:convergence_up_to_precision_optimal}
    \end{subfigure}
    \caption{Loss error $\mcal{L}(u_t) - \mcal{L}_{\mathrm{min}, \mcal{M}}$, plotted against the number of steps}
    \label{fig:convergence_up_to_precision}
\end{figure}

Since we know that the best approximation error will be attained in expectation after $t_{\mathrm{max}} = 22$ steps \revision[0]{(by using Corollary~\ref{cor:convergence_up_to_precision} with $\varepsilon = \mathcal{L}_{\mathrm{min},\mathcal{M}}$)}, we could then switch from the constant step $s_t = \tfrac19$ to the step size sequence $s_t = \tfrac29 t^{\varepsilon-1}$ with $\varepsilon = 0.1$, guaranteeing sublinear convergence.
Figure~\ref{fig:convergence_up_to_precision_mixed} demonstrates this.

\begin{figure}[htb]
    \centering
    \begin{subfigure}[b]{0.49\textwidth}
        \centering
        \includegraphics[width=\textwidth]{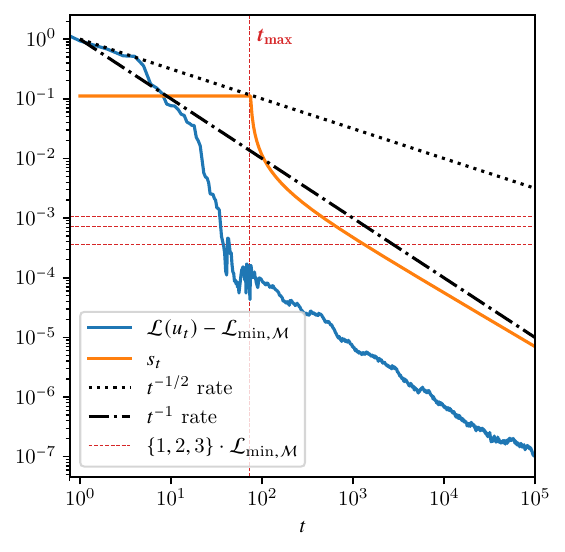}
        \caption{Uniform sampling}
        \label{fig:convergence_up_to_precision_mixed_uniform}
    \end{subfigure}
    \hfill
    \begin{subfigure}[b]{0.49\textwidth}
        \centering
        \includegraphics[width=\textwidth]{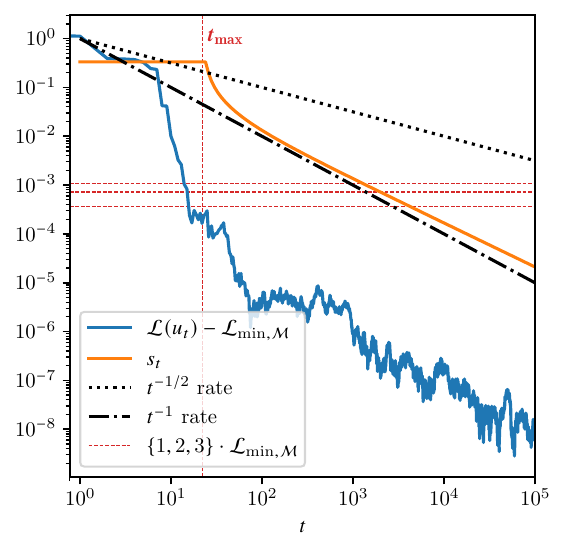}
        \caption{Optimal sampling}
        \label{fig:convergence_up_to_precision_mixed_optimal}
    \end{subfigure}
    \caption{Loss error $\mcal{L}(u_t) - \mcal{L}_{\mathrm{min}, \mcal{M}}$, plotted against the number of steps}
    \label{fig:convergence_up_to_precision_mixed}
\end{figure}

\subsubsection{Convergence for biased projectors}

In this section, we consider two different types of projectors
\begin{enumerate}[label=(\roman*)]
    \item quasi-projections~\eqref{eq:quasi-projection} and
    \item least squares projections~\eqref{eq:least-squares-projection}.
\end{enumerate}
In both cases, the samples are conditioned on the stability event~\eqref{eq:stability} with $\delta=0.5$, which induces a bias even in the quasi-projection.
To ensure that~\eqref{eq:stability} is satisfied with non-vanishing probability, we draw 
$n = 2d_t = 6 > d_t \log(d_t)$ samples via rejection sampling~\cite{Casella2004}.
As discussed in the beginning of section~\ref{sec:Convergence_theorey}, this will produce an algorithm that converges up to the bias~\eqref{eq:least_squares_convergence_to_bias}.
This can be rewritten as
\begin{equation}
\label{ref:bias_oscillation}
    \mcal{L}(u_t) - \mcal{L}_{\mathrm{min},\mcal{M}} \le c_{\mathrm{bias},2}^2 \mcal{L}_{\mathrm{min},\mcal{M}},
\end{equation}
which is demonstrated in Figure~\ref{fig:convergence_up_to_bias}.
Note that while the bias introduced in the conditional stable quasi-projection and the least square projection leads to stagnation, as illustrated in Figure~\ref{fig:convergence_up_to_bias}, the developed quasi-projection strategy (without conditioning) does not stagnate (cf.~Figures~\ref{fig:sublinear_convergence} and~\ref{fig:convergence_up_to_precision_mixed}).

\begin{figure}[htb]
    \centering
    \begin{subfigure}[b]{0.49\textwidth}
        \centering
        \includegraphics[width=\textwidth]{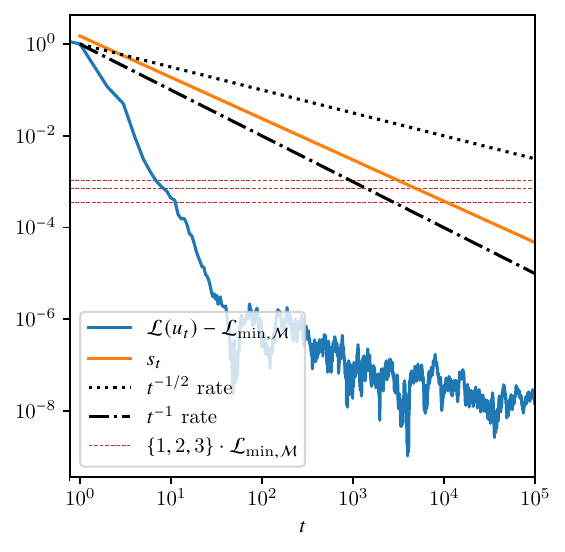}
        \caption{Conditionally stable quasi-projection}
        \label{fig:convergence_up_to_bias_quasi}
    \end{subfigure}
    \hfill
    \begin{subfigure}[b]{0.49\textwidth}
        \centering
        \includegraphics[width=\textwidth]{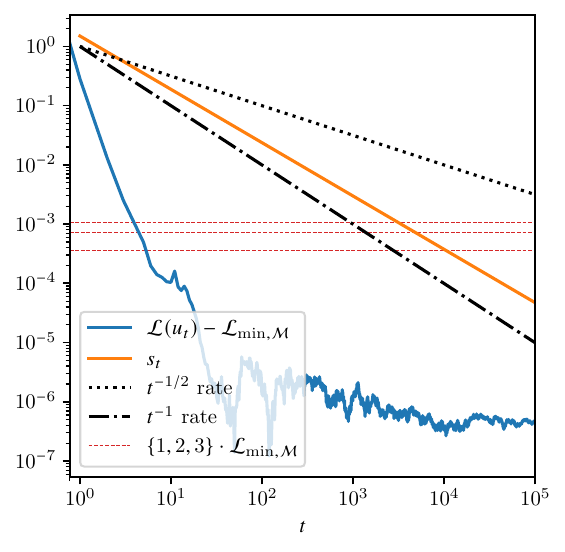}
        \caption{Least squares projection}
        \label{fig:convergence_up_to_bias_least-squares}
    \end{subfigure}
    \caption{Loss error $\mcal{L}(u_t) - \mcal{L}_{\mathrm{min}, \mcal{M}}$, plotted against the number of steps}
    \label{fig:convergence_up_to_bias}
\end{figure}

Since the empirical Gramian is close to the identity, we would expect both the projection and the quasi-projection to be close to the orthogonal projection.
In this case, we could use the deterministically optimal step size $s_t\equiv 1$ and achieve convergence in a single iteration.
This is demonstrated for four levels of stability in Figure~\ref{fig:deterministic_step_sizes}.

\begin{figure}[htb]
    \centering
    \begin{subfigure}[b]{0.24\textwidth}
        \centering
        \includegraphics[width=\textwidth]{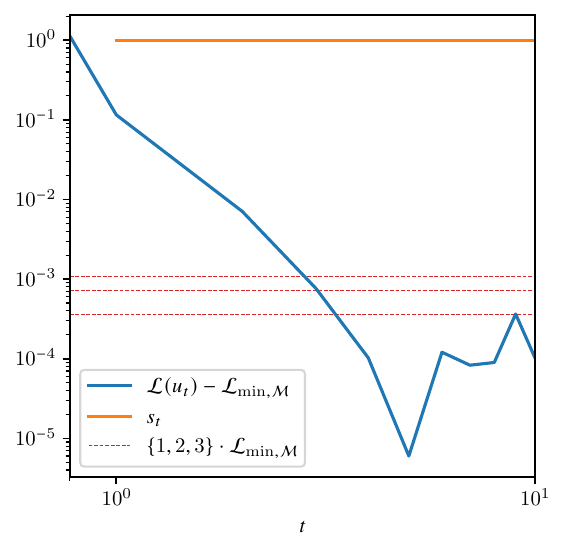}
        \caption{Quasi-projection\\\centering($\delta=\tfrac12$)}
        \label{fig:deterministic_step_sizes_quasi_12}
    \end{subfigure}
    \hfill
    \begin{subfigure}[b]{0.24\textwidth}
        \centering
        \includegraphics[width=\textwidth]{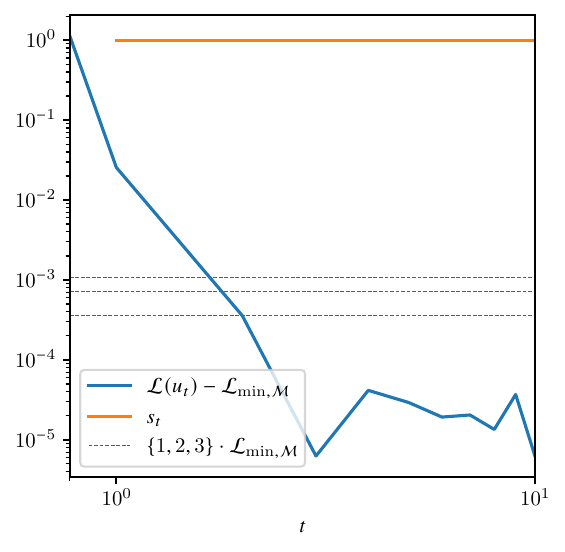}
        \caption{Quasi-projection\\\centering($\delta=\tfrac14$)}
        \label{fig:deterministic_step_sizes_quasi_14}
    \end{subfigure}
    \hfill
    \begin{subfigure}[b]{0.24\textwidth}
        \centering
        \includegraphics[width=\textwidth]{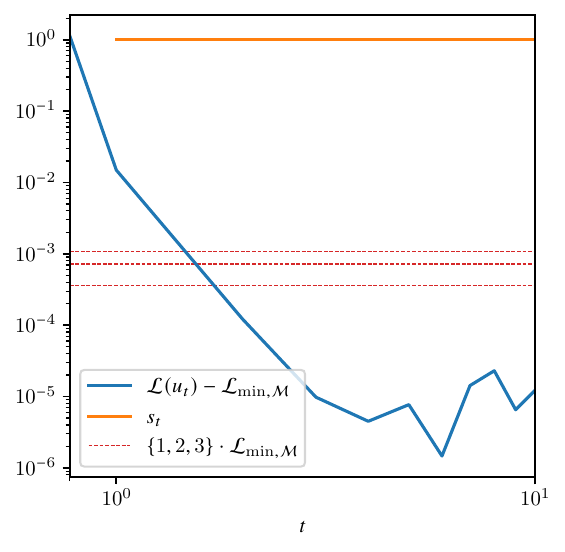}
        \caption{Quasi-projection\\\centering($\delta=\tfrac18$)}
        \label{fig:deterministic_step_sizes_quasi_18}
    \end{subfigure}
    \hfill
    \begin{subfigure}[b]{0.24\textwidth}
        \centering
        \includegraphics[width=\textwidth]{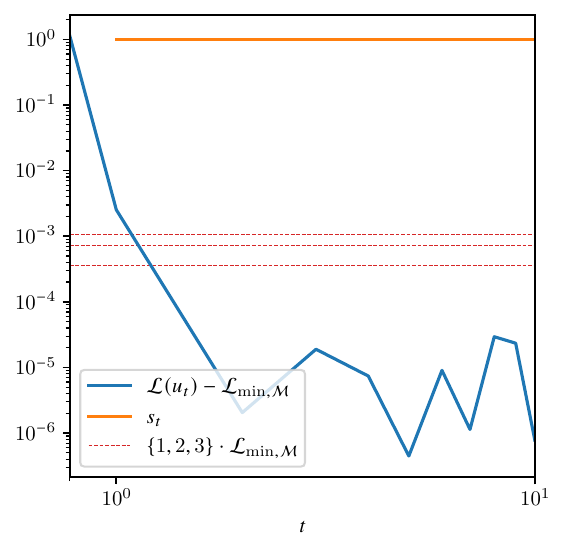}
        \caption{Quasi-projection\\\centering($\delta=\tfrac1{16}$)}
        \label{fig:deterministic_step_sizes_quasi_116}
    \end{subfigure}

    \begin{subfigure}[b]{0.24\textwidth}
        \centering
        \includegraphics[width=\textwidth]{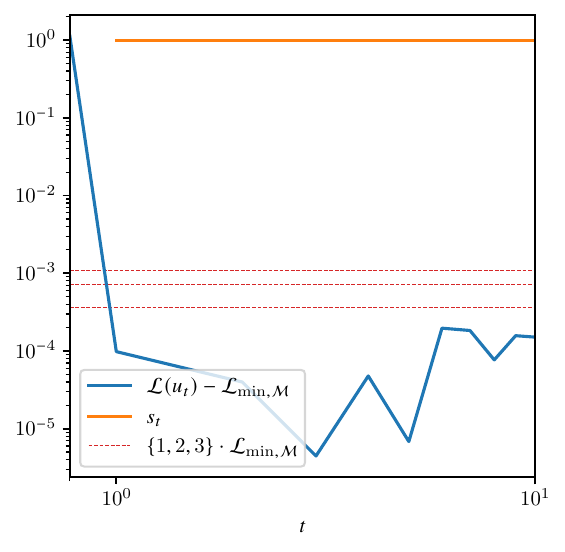}
        \caption{LS projection\\\centering($\delta=\tfrac1{2}$)}
        \label{fig:deterministic_step_sizes_least-squares_12}
    \end{subfigure}
    \hfill
    \begin{subfigure}[b]{0.24\textwidth}
        \centering
        \includegraphics[width=\textwidth]{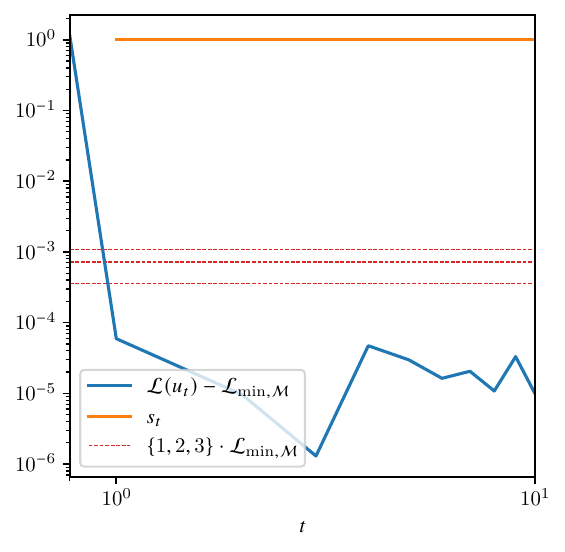}
        \caption{LS projection\\\centering($\delta=\tfrac1{4}$)}
        \label{fig:deterministic_step_sizes_least-squares_14}
    \end{subfigure}
    \hfill
    \begin{subfigure}[b]{0.24\textwidth}
        \centering
        \includegraphics[width=\textwidth]{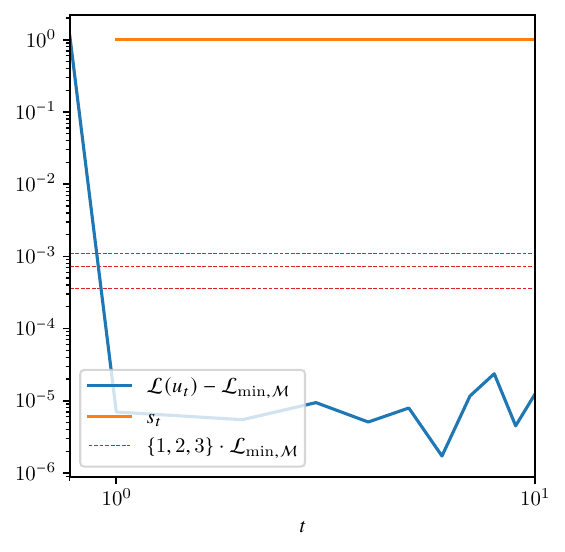}
        \caption{LS projection\\\centering($\delta=\tfrac1{8}$)}
        \label{fig:deterministic_step_sizes_least-squares_18}
    \end{subfigure}
    \hfill
    \begin{subfigure}[b]{0.24\textwidth}
        \centering
        \includegraphics[width=\textwidth]{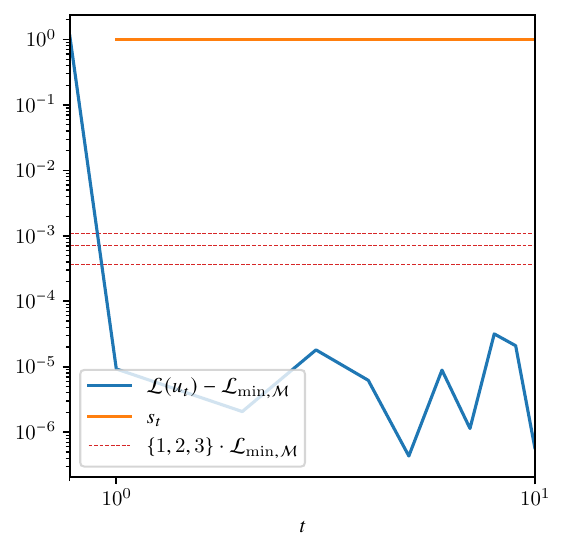}
        \caption{LS projection\\\centering($\delta=\tfrac1{16}$)}
        \label{fig:deterministic_step_sizes_least-squares_116}
    \end{subfigure}

    \caption{Loss error $\mcal{L}(u_t) - \mcal{L}_{\mathrm{min}, \mcal{M}}$, plotted against the number of steps}
    \label{fig:deterministic_step_sizes}
\end{figure}

\subsection{Linear approximation on unbounded domains}
\label{sec:numerics_lin_unbounded}

This subsection considers for $\rho$ the Gaussian distribution on $\mbb{R}$.
We again aim to approximate the target function $u(x) = \exp(x)$.
The linear space $\mcal{V}$ is the space of polynomials of degree less than $7$, and we choose for $\mcal{B}_t$ the first $7$ Hermite polynomials.
We choose $\mcal{M} = \mcal{V}$ and $\mcal{T}_t \equiv \mcal{V}$ as well as $R_t(v) := v$.
In this setting, $\mcal{L}_{\mathrm{min},\mcal{M}} \approx 3.1\cdot 10^{-4}$, the strong Polyak-\L{}ojasiewicz property holds with $\PL = 1$ and the retraction error is controlled with $C_{\mathrm{R}}=0$ and $\beta=0$.
For the empirical operator $P_t^n$, we choose the unbiased quasi-projection defined in~\eqref{eq:quasi-projection}.
However, as discussed in section~\ref{sec:quasi-projection}, the variance constant $k_t$ is unbounded when the weight function is chosen constant ($w_t\equiv 1$).
Figure~\ref{fig:gaussian_convergence} contrasts the uncontrolled behaviour of unweighted SGD with the sublinear convergence rate guaranteed for the optimal weight function in Corollary~\ref{cor:quasi-linear-convergence}.

This section also highlights another very important aspect of SGD.
People might think that the Gramian $G$ may be well estimated by Monte Carlo just because the sought function $u$ is smooth.
As illustrated in Figure~\ref{fig:gaussian_convergence}, this is \textbf{by no means the case}!
We have to use optimal sampling in order to promote stability of the stochastic gradient.

\begin{figure}[htb]
    \centering
    \begin{subfigure}[b]{0.325\textwidth}
        \centering
        \includegraphics[width=\textwidth]{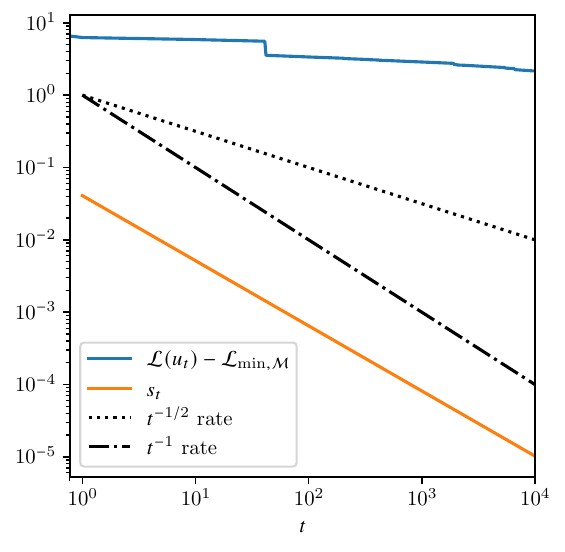}
        \caption{Gaussian sampling, $n=1$}
        \label{fig:gaussian_convergence_gaussian_sampling_n1}
    \end{subfigure}
    \hfill
    \begin{subfigure}[b]{0.325\textwidth}
        \centering
        \includegraphics[width=\textwidth]{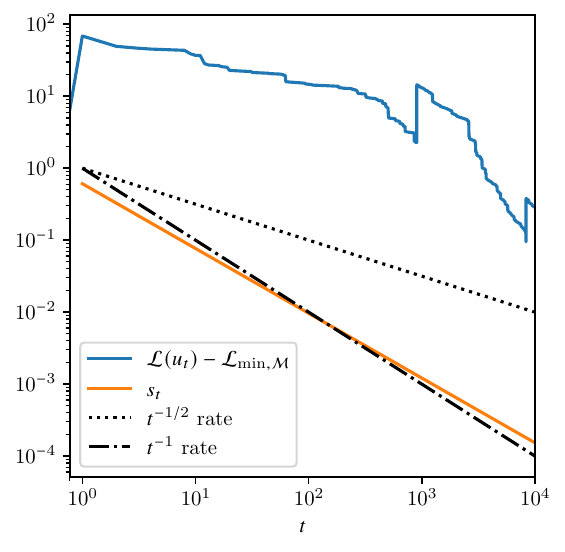}
        \caption{Gaussian sampling, $n=21$}
        \label{fig:gaussian_convergence_gaussian_sampling_n21}
    \end{subfigure}
    \hfill
    \begin{subfigure}[b]{0.325\textwidth}
        \centering
        \includegraphics[width=\textwidth]{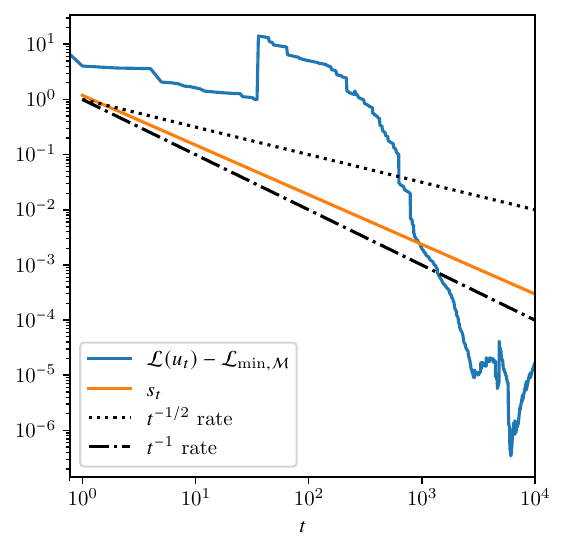}
        \caption{Gaussian sampling, $n=70$}
        \label{fig:gaussian_convergence_gaussian_sampling_n70}
    \end{subfigure}

    \begin{subfigure}[b]{0.325\textwidth}
        \centering
        \includegraphics[width=\textwidth]{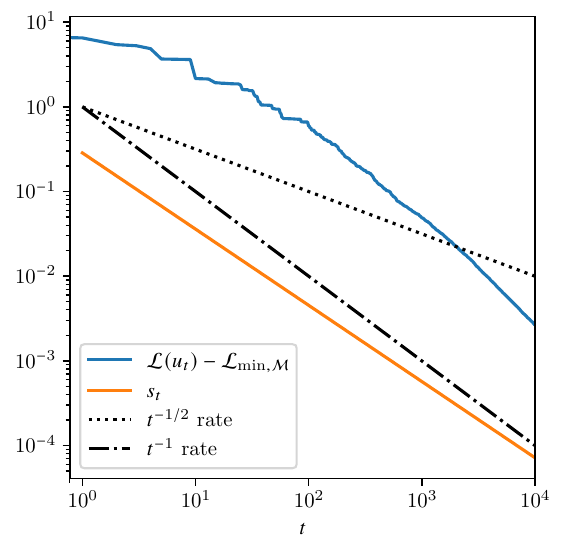}
        \caption{Optimal sampling, $n=1$}
        \label{fig:gaussian_convergence_optimal_sampling_n1}
    \end{subfigure}
    \hspace{0.0125\textwidth}
    \begin{subfigure}[b]{0.325\textwidth}
        \centering
        \includegraphics[width=\textwidth]{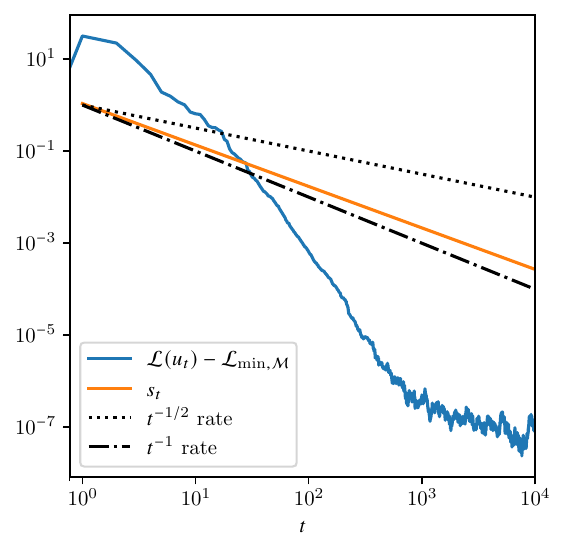}
        \caption{Optimal sampling, $n=7$}
        \label{fig:gaussian_convergence_optimal_sampling_n7}
    \end{subfigure}
    \hfill

    \caption{Loss error $\mcal{L}(u_t) - \mcal{L}_{\mathrm{min}, \mcal{M}}$, plotted against the number of steps}
    \label{fig:gaussian_convergence}
\end{figure}

\subsection{Shallow neural networks}
\label{sec:shallow}

As a final example, we consider model classes of shallow neural networks.
For a continuously differentiable activation function $\sigma \colon \mathbb{R}\to\mathbb{R}$, we define the shallow network $\Phi_\theta\colon \mcal{X} = \mathbb{R}^D\to\mathbb{R}$ of width $m\in\mbb{N}$ and with parameters $\theta := (A_1, b_1, A_0, b_0) \in \Theta:=\mathbb{R}^{1\times m} \times \mathbb{R} \times \mathbb{R}^{m\times D} \times \mathbb{R}^m \simeq \mathbb{R}^{m(D+2)+1}$ by
$$
    \Phi_{(A_1, b_1, A_0, b_0)}(x)
    = A_1 \sigma(A_0x + b_0) + b_1 .
$$
As a model class, we thus define
$$
    \mcal{M} := \braces{\Phi_\theta \,:\, \theta \in \Theta
    } .
$$
Some basic properties of this function class are presented in appendix~\ref{app:shallow_nns}.
We present these properties for general $D\in\mbb{N}$ to showcase the applicability of our theory in the broader setting.
However, for the sake of simplicity and ease of implementation, we restrict the experiments to the case $D=1$.

For the linearisations, we choose $\mcal{T}_t = \mcal{T}_{\Phi_{\theta_t}}$, as defined in appendix~\ref{app:shallow_nns}.

To define a retraction, note that $u_t = \Phi_{\theta_t}$ and $P_t^n g_t = \sum_{k=1}^d (\vartheta_t)_k (\partial_{(\theta_t)_k} \Phi_{\theta_t})$ and thus, by Taylor's theorem,
$$
    \Phi_{\theta_t} + s_t P_t^ng_t = \Phi_{\theta_t + s_t\vartheta_t} + \mathcal{O}(s_t^2)
$$
for sufficiently small $s_t$.
We can thus define the retraction operator as the parameter space $R_t(u_t + s_tP_t^ng_t) := \Phi_{\theta_t + s_t\vartheta_t}$,
as usually done when learning neural networks. 

\revision[0]{We show in appendix~\ref{app:shallow_nns} that this retraction satisfies assumption~\eqref{eq:C-retraction} with $C_{\mathrm{R}} = 0$ and  we can estimate the bias term by
$$
    \beta_t(s_t) := \mathrm{Lip}_t(\sqrt{2\lambda_t} + \varepsilon(s_t) + s_t\norm{P_t^ng_t}) \varepsilon(s_t) .
$$
Here $\lambda_t$ is an iterative estimate of $\mathcal{L}(u_t)$ given by
$$
    \lambda_{t} := \tfrac12\lambda_{t-1} + \tfrac14 \norm{u - u_t}_n^2 , \qquad\text{with}\qquad \lambda_{0} = \tfrac12 \norm{u - u_t}_n^2
$$
and $\varepsilon(s_t) := \norm{u_{t+1} - \bar{u}_{t+1}}$ is the retraction error for step size $s_t$.
Since $u_{t+1}$, $\bar{u}_{t+1}$ and $P_t^ng_t$ are shallow neural networks, the norms $\varepsilon(s_t)$ and $\norm{P_t^ng_t}$ can be computed analytically.

With the estimate $\beta_t(s_t)$, we can choose the step size $s_t$ as to maximise the descent factor $\sigma_t$ from Theorem~\ref{thm:convergence_in_expectation} while satisfying a given bound on this bias term $\beta_t(s_t) \le \bar\beta_t$ for an a priori chosen sequence $\bar\beta_t$.
\footnote{Note, however, the resulting step size $s_t$ is no longer adapted to the filtration $\mcal{F}_t$ since it depends on the update direction $P_t^ng_t$.}
Note that, since the condition $\beta_t(s_t)\le\bar\beta_t$ restricts the step size, the sequence $\bar\beta_t$ must not decrease too quickly as we need $s\not\in\ell^1$.
In the present setting, this gives the condition $\bar\beta_t\not\in\ell^2$.
}
As a final adaptation to increase the convergence rate, we argue that the retraction error $\beta_t(s_t)$ should always be smaller than the current loss estimate $\lambda_t$ \revision[0]{and select the step size to ensure}
\begin{equation}
\label{eq:chosen_retraction_error_threshold}
    \beta_t(s_t) \le \bar\beta_t := \min\braces{\lambda_t, s_{\mathrm{opt}} t^{-1/2}},
\end{equation}
where $s_{\mathrm{opt}}$ is the optimal (linear) step size, maximising $\sigma_t$.

\paragraph{Experiments}
In the following, we list some plots for the approximation of $u(x) := \sin(2\pi x)$ on the interval $[-1, 1]$ by a shallow neural network of width $m = 20$ and the RePU activation $\sigma(x) := \max\{x, 0\}^2$.
The following experiments were performed and all use $n = 10m = 200$ sample points per step.
\begin{enumerate}[label=(\alph*)]
    \item $P_t^n$ is the least squares projection with optimal sampling and stability constant $\delta = \tfrac12$.
    The step size is chosen such that $\beta_t(s_t) \le \beta_t$ with $\beta_t$ defined as in~\eqref{eq:chosen_retraction_error_threshold}.
    The results are presented in Figure~\ref{fig:NGD_ls_threshold}.
    \item $P_t^n$ is the least squares projection with optimal sampling and stability constant $\delta = \tfrac12$.
    The step size is chosen as $s_t := s_{\mathrm{opt}} t^{-1/2}$.
    The results are presented in Figure~\ref{fig:NGD_ls_decreasing}.
    \item $P_t^n$ is the quasi-projection with optimal sampling and \textbf{without conditioning}.
    The step size is chosen such that $\beta_t(s_t) \le \beta_t$ with $\beta_t$ defined as in~\eqref{eq:chosen_retraction_error_threshold}.
    The results are presented in Figure~\ref{fig:NGD_quasi_threshold}.
    \item We compare these to a standard SGD.
    $P_t^n$ is the non-projection with uniform sampling.
    The step size is chosen as $s_t := s_{\mathrm{opt}} t^{-1/2}$.
    The results are presented in Figure~\ref{fig:SGD}.
\end{enumerate}

\revision[0]{From Figures~\ref{fig:NGD_ls_threshold} and~\ref{fig:SGD}, it is clear that NGD with optimal sampling outperforms standard SGD.
(NGD attains a loss of $10^{-8}$ while SGD stagnates at $10^{-1}$.)
Moreover, comparing figures~\ref{fig:NGD_ls_threshold} and~\ref{fig:NGD_ls_decreasing}, it can be seen that the adaptive sample size strategy proposed in equation~\eqref{eq:chosen_retraction_error_threshold} clearly outperforms a standard Robbins--Monro step size.
Finally, and quite noteworthy, a NGD using quasi-projections (Figure~\ref{fig:NGD_quasi_threshold}) seems to stagnate earlier than a NGD using least squares projections (Figure~\ref{fig:NGD_ls_threshold}).
While it is not clear why this is the case, we suspect this to be a consequence of projections being more accurate than quasi-projections.
}

\begin{figure}[htb]
    \centering
    \begin{subfigure}[b]{0.475\textwidth}
        \centering
        \includegraphics[width=\textwidth]{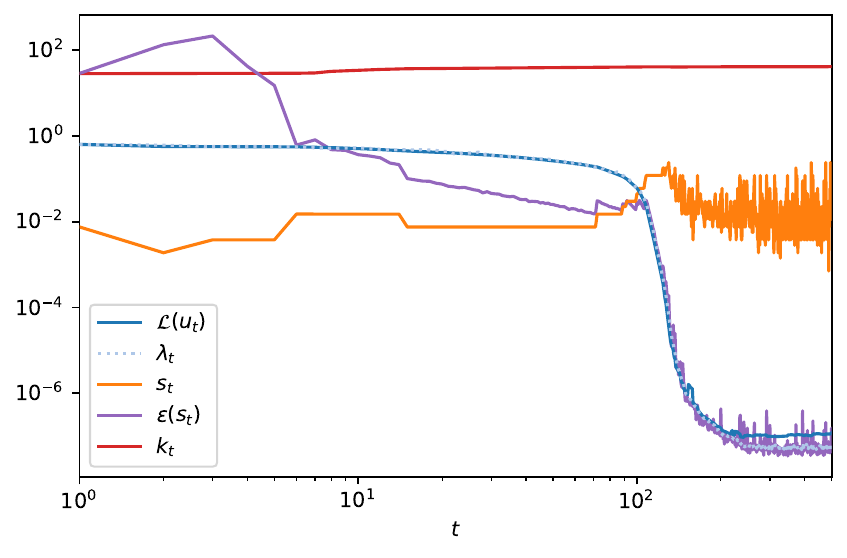}
        \caption{NGD with optimal sampling, least squares projection and adaptive step sizes}
        \label{fig:NGD_ls_threshold}
    \end{subfigure}
    \hfill
    \begin{subfigure}[b]{0.475\textwidth}
        \centering
        \includegraphics[width=\textwidth]{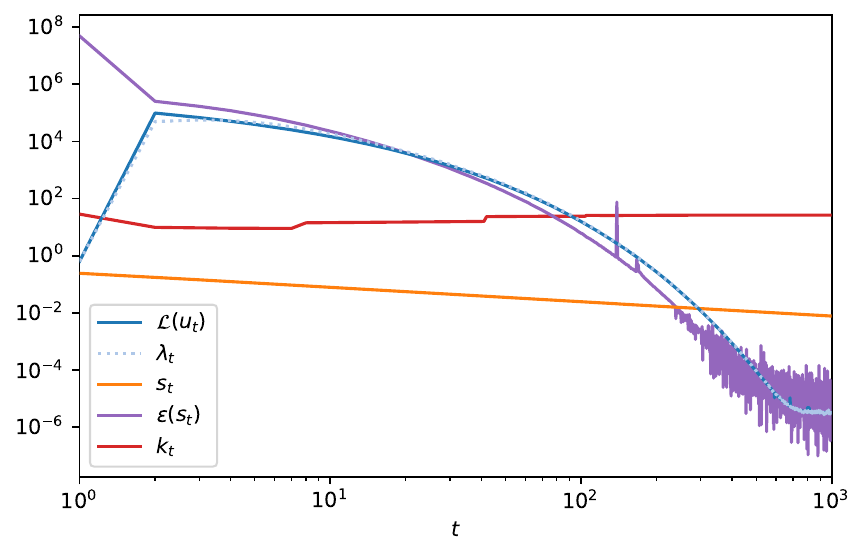}
        \caption{NGD with optimal sampling, least squares projection and decreasing step sizes}
        \label{fig:NGD_ls_decreasing}
    \end{subfigure}

    \begin{subfigure}[b]{0.475\textwidth}
        \centering
        \includegraphics[width=\textwidth]{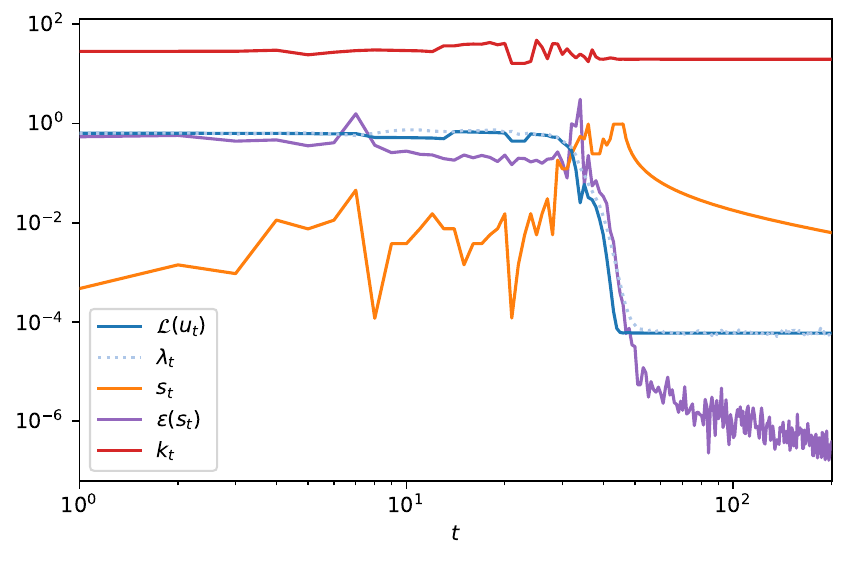}
        \caption{NGD with optimal sampling, quasi-projection and adaptive step sizes}
        \label{fig:NGD_quasi_threshold}
    \end{subfigure}
    \hfill
    \begin{subfigure}[b]{0.475\textwidth}
        \centering
        \includegraphics[width=\textwidth]{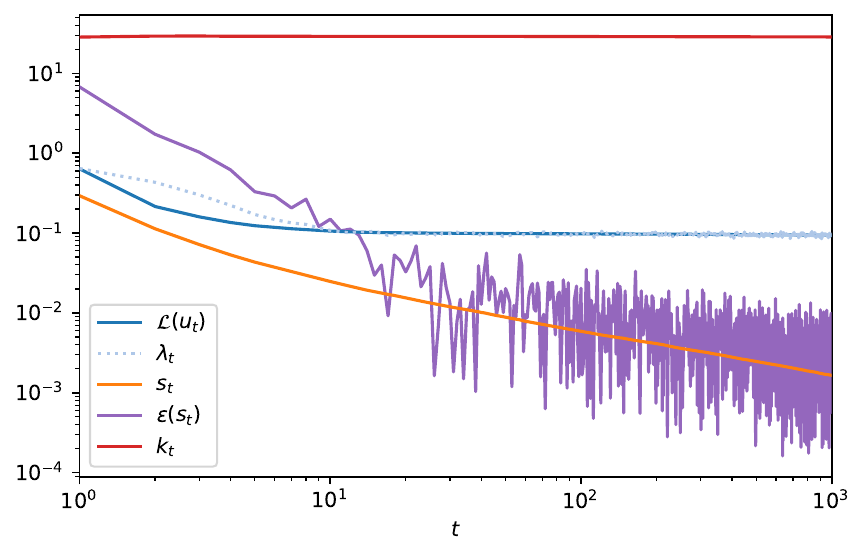}
        \caption{SGD with uniform sampling and decreasing step sizes}
        \label{fig:SGD}
    \end{subfigure}

    \caption{Loss $\mcal{L}(u_t)$ plotted against the number of steps $t$.}
\end{figure}

\section{Conclusion and Outlook}
\revision[0]{
This work analyses a natural gradient descent-type algorithm where the descent directions are given by empirical estimates of projected gradients.
We discuss several projection estimators based on optimal sampling strategies like quasi-projections, least squares projections and least squares projections based on determinantal point processes (or volume sampling).
We obtain convergence rates in (conditional) expectation and almost surely.
Numerical experiments confirm the theoretical findings for least squares problems with linear model classes on bounded and unbounded domains, and with shallow neural networks.
Overall, the proposed scheme is a first order (stochastic) decent method and consequently we do not claim convergence to a global minimizer in the general non-convex setup. In such setting, the analysis may be extended to local versions of the assumptions including weak P\L{} conditions. 

\paragraph{Optimal sampling.}
Key to our results is the use of optimal sampling to bound the variance of the estimators.
Since sampling from these measures is a non-trivial open problem (as discussed in the introduction), the presented strategy can only be compared to others in specific settings.
For linear cases, this is done in Remark~\ref{rmk:complexity}. 
We note that using optimal sampling is, in general, not a trade-off between fast convergence and cost per iteration. 
Convergence of the iteration requires the variance of the estimates to remain bounded.
This will, in general, require some version of optimal sampling.

\paragraph{Step size selection.}
Since, intuitively, a properly defined empirical projection should be close to the true projection, we would expect similar step size requirements as for deterministic descent methods.
Our results confirm this intuition in many cases.
This immediately raises the question of whether we can adapt the presented theory to other step size selection strategies.
This bares the problem that, when exact evaluations of the risk functional are inaccessible, line search strategies can not be applied directly.
Moreover, to allow using a priori bias and variance bounds known for many projectors, the step sizes should be stochastically independent of the empirical projectors $P_t^n$.
}

\section*{Data Availability}
All data used in this study are synthetic and generated as part of the numerical experiments.

\section*{Acknowledgements}
This project is funded by the ANR-DFG project COFNET (ANR-21-CE46-0015).
This work was partially conducted within the France 2030 framework program, Centre Henri Lebesgue ANR-11-LABX-0020-01.
RG acknowledges support by the DFG MATH+ project AA5-5 (was EF1-25) -
\textit{Wasserstein Gradient Flows for Generalised Transport in Bayesian Inversion}.

Our code makes extensive use of the Python packages \texttt{numpy}~\cite{numpy}, \texttt{scipy}~\cite{scipy}, and \texttt{matplotlib}~\cite{matplotlib}. 

\bibliographystyle{abbrv}
\bibliography{references}

\newpage
\appendix
\section{Using the non-projection yields SGD}
\label{app:leibniz}

To see that using the non-projection in~\eqref{eq:descent_scheme} yields the standard SGD method when $\mcal{H} = L^2(\mathcal{X},\rho)$ and $\ell(v;x) = \tilde \ell(v(x); x)$ with $\tilde \ell(\cdot ; x) : \mathbb{R} \to \mathbb{R}$ satisfies certain regularity assumptions, recall that SGD approximates $\mcal{L}(v) $ by 
$$
    \mcal{L}_n(v)
    := \frac{1}{n} \sum_{i=1}^n \ell(v; x_i)
    = \frac{1}{n} \sum_{i=1}^n \tilde \ell(v(x_i); x_i) ,
$$
where the $x_i$ are samples from $\rho$, and $w_t = 1$.
Now assume that $\mathcal{M} = \{ F(\theta) : \theta \in \mathbb{R}^d\}$ with a differentiable parametrisation map $F : \mathbb{R}^d \to \mcal{H}$.
Denoting by $\theta_t\in\mbb{R}^d$ the parameter's value of $u_t = F(\theta_t)$, we can define $\varphi_k = \partial_{k} F(\theta_t)$ and $\mcal{T}_t := \operatorname{span}\braces{\varphi_k : 1 \le k \le d} 
$.
SGD then uses the chain rule $D_{\theta} \mcal{L}_n(F(\theta)) = D_v\mcal{L}_n(F(\theta)) \circ D_{\theta} F(\theta)$ to compute the gradient
$$
    \pars{\nabla_{\theta} \mcal{L}_n(F(\theta)), e_k}
    = \inner{D_v\mcal{L}_n(F(\theta)), \inner{D_{\theta} v(\theta), e_k}}
    = \inner{D_v\mcal{L}_n(F(\theta)), \varphi_k} .
$$
Recall that the Fr\'echet derivative of $\mcal{L}_n$ in $\mcal{H} = L^2\pars{\mcal{X},\rho}$ is given by
$$
    \inner{D_v \mcal{L}_n(v), \varphi_k} 
    = \frac{1}{n} \sum_{i=1}^n  \tilde\ell'(v(x_i); x_i) \varphi_k(x_i) = \pars{\tilde\ell'(v(\bullet); \bullet), \varphi_k}_n .
$$
On the other hand, by Leibniz's integral rule (Proposition~\ref{prop:leibniz_integral_rule} and Corollary~\ref{cor:leibniz_application}), it holds for sufficiently regular loss functions that $\nabla_v \mcal{L}(v) = \tilde \ell'(v(\bullet); \bullet)$.
Hence, if the loss is sufficiently regular and $\mcal{H} = L^2(\mcal{X},\rho)$, we can write the gradient of SGD as
\begin{equation}
\label{eq:etahat_is_parameter_grad}
    \pars{\nabla_{\theta} \mcal{L}_n(F(\theta)), e_k}  
    = \pars{\nabla_v\mcal{L}(F(\theta)), \varphi_k}_n   
    = \hat\zeta_k ,
    \qquad\text{i.e.}\qquad
    \hat\zeta = \nabla_{\theta} \mcal{L}_n(F(\theta)) .
\end{equation}

\begin{proposition}[Leibniz integral rule] \label{prop:leibniz_integral_rule}
    Let $\mcal{V}$ be a Banach space and $(\mcal{X}, \Sigma, \rho)$ be a measure space.
    Suppose that $\ell:\mcal{V}\times\mcal{X}\to\mbb{R}$ satisfies the following conditions:
    \begin{enumerate}
        \item The function $\ell\pars{v;\bullet} : \mcal{X}\to\mbb{R}$ is Lebesgue integrable for each $v\in\mcal{V}$.
        \item The function $\ell(\bullet; x) : \mcal{V}\to\mbb{R}$ is Fr\'echet differentiable for almost all $x\in\mcal{X}$.
        \item There exists a Lebesgue integrable function $\alpha:\mcal{X}\to\mbb{R}$ such that $\norm{D_v\ell\pars{v;x}}_{\mcal{V}^*} \le \alpha\pars{x}$ for all $v\in\mcal{V}$ and almost all $x\in\mcal{X}$.
    \end{enumerate}
    Then, for all $v\in\mcal{V}$ and $h\in\mcal{V}$.
    \begin{equation}
        \inner*{
        D_v \pars*{\int\ell\pars{v;x} \dx[\rho(x)]}, h}
        = \int \inner{D_v \ell\pars{v;x} h} \dx[\rho(x)] .
    \end{equation}
\end{proposition}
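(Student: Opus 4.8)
The statement to prove is the Leibniz integral rule (Proposition~\ref{prop:leibniz_integral_rule}): under the three stated hypotheses, differentiation of $\int \ell(v;x)\,d\rho(x)$ with respect to $v$ commutes with integration.

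Here is my plan.

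\medskip

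\textbf{Approach.} The plan is to reduce the Fréchet-differentiability statement to a one-dimensional differentiation-under-the-integral-sign argument along each direction $h$, and then invoke the dominated convergence theorem using the uniform bound $\alpha$ from hypothesis~(3) together with the mean value inequality. First I would fix $v\in\mcal{V}$ and $h\in\mcal{V}$ and define the scalar function $F(v) := \int \ell(v;x)\,d\rho(x)$, which is well-defined by hypothesis~(1). The candidate derivative is the linear functional $h \mapsto \Lambda(h) := \int \inner{D_v\ell(v;x)}{h}\,d\rho(x)$; this integral converges absolutely because $|\inner{D_v\ell(v;x)}{h}| \le \|D_v\ell(v;x)\|_{\mcal{V}^*}\|h\| \le \alpha(x)\|h\|$ by hypothesis~(3), and $\Lambda$ is bounded with $\|\Lambda\|_{\mcal{V}^*} \le \int\alpha\,d\rho$.

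\medskip

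\textbf{Key steps.} The main work is to show $F(v+h) - F(v) - \Lambda(h) = o(\|h\|)$ as $\|h\|\to 0$. For this I would write the difference quotient pointwise: for a.e.\ $x$, since $\ell(\bullet;x)$ is Fréchet differentiable at $v$, the function $t\mapsto \ell(v+th;x)$ is differentiable on $[0,1]$ with derivative $\inner{D_v\ell(v+th;x)}{h}$, hence by the fundamental theorem of calculus
\begin{equation*}
    \ell(v+h;x) - \ell(v;x) - \inner{D_v\ell(v;x)}{h}
    = \int_0^1 \bigl(\inner{D_v\ell(v+th;x)}{h} - \inner{D_v\ell(v;x)}{h}\bigr)\,dt .
\end{equation*}
Integrating over $x$ and applying Fubini (justified because the integrand is dominated by $2\alpha(x)\|h\|$, which is $\rho\otimes\mathrm{Leb}$-integrable on $\mcal{X}\times[0,1]$) gives
\begin{equation*}
    F(v+h) - F(v) - \Lambda(h)
    = \int_0^1 \int_{\mcal{X}} \bigl(\inner{D_v\ell(v+th;x)}{h} - \inner{D_v\ell(v;x)}{h}\bigr)\,d\rho(x)\,dt .
\end{equation*}
Now I would divide by $\|h\|$, set $h = \|h\|e$ with $\|e\|=1$, and show the right-hand side is $o(\|h\|)$. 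The pointwise integrand $\inner{D_v\ell(v+th;x)}{e} - \inner{D_v\ell(v;x)}{e}$ tends to $0$ as $\|h\|\to 0$ for a.e.\ $x$ and every $t\in[0,1]$ (by Fréchet differentiability, $D_v\ell(\bullet;x)$ need not be continuous — here is a subtlety, see below), and it is bounded by $2\alpha(x)$ uniformly in $t$ and in the direction; dominated convergence then yields the claim.

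\medskip

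\textbf{Main obstacle.} The delicate point is that Fréchet differentiability of $\ell(\bullet;x)$ at the single point $v$ does \emph{not} give continuity of the derivative map $w\mapsto D_v\ell(w;x)$, so $\inner{D_v\ell(v+th;x)}{e}$ need not converge to $\inner{D_v\ell(v;x)}{e}$ as $h\to 0$. The standard fix, which I would adopt, is to replace the mean-value identity above by the direct estimate coming from the definition of the Fréchet derivative: for a.e.\ $x$, $\ell(v+h;x) - \ell(v;x) - \inner{D_v\ell(v;x)}{h} = r(h,x)$ with $|r(h,x)|/\|h\| \to 0$ as $\|h\|\to 0$, and $|r(h,x)| \le |\ell(v+h;x)-\ell(v;x)| + \|D_v\ell(v;x)\|_{\mcal{V}^*}\|h\|$. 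Bounding $|\ell(v+h;x)-\ell(v;x)| \le \int_0^1 |\inner{D_v\ell(v+th;x)}{h}|\,dt \le \alpha(x)\|h\|$ via the mean value inequality (which needs only differentiability along the segment, not continuity of the derivative), we get $|r(h,x)|/\|h\| \le 2\alpha(x)$. Since $\int_{\mcal{X}} |r(h,x)|\,d\rho(x) = o(\|h\|)$ will follow from dominated convergence applied to $r(h,x)/\|h\| \to 0$ pointwise with dominating function $2\alpha$, and since $F(v+h)-F(v)-\Lambda(h) = \int r(h,x)\,d\rho(x)$ by linearity of the integral, the result is established. A final remark I would add: strictly speaking one applies dominated convergence along arbitrary sequences $h_k\to 0$, which suffices to conclude the $o(\|h\|)$ statement.
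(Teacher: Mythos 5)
Your proof is correct. Note that the paper does not actually prove this proposition itself: its ``proof'' is a one-line citation to an external note, so there is no in-paper argument to compare against. Your self-contained argument is the standard one and is sound: you correctly identify the candidate derivative $\Lambda(h)=\int\pars{D_v\ell(v;x)h}\dx[\rho(x)]$, verify its boundedness from hypothesis~(3), and reduce the claim to showing that the remainder $r(h,x)=\ell(v+h;x)-\ell(v;x)-\pars{D_v\ell(v;x)h}$ integrates to $o(\norm{h})$. Crucially, you spot the genuine pitfall in the naive mean-value/Fubini route (Fréchet differentiability at the single point $v$ does not give continuity of $w\mapsto D_v\ell(w;x)$) and replace it by the correct combination of the pointwise estimate $\abs{r(h,x)}/\norm{h}\to 0$ with the domination $\abs{r(h,x)}\le 2\alpha(x)\norm{h}$, the latter obtained from the mean value inequality along the segment, which only needs differentiability there. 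The only point left implicit is the measurability of $x\mapsto\pars{D_v\ell(v;x)h}$, needed to apply dominated convergence; this follows since it is the a.e.\ pointwise limit of the measurable difference quotients $t_k^{-1}\pars{\ell(v+t_k h;x)-\ell(v;x)}$, and is worth a sentence. Your closing remark about passing through arbitrary sequences $h_k\to 0$ is exactly the right way to phrase the dominated convergence step.
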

\begin{proof}
    See~\cite{kammar2016leibniz}.
\end{proof}

\begin{corollary}
\label{cor:leibniz_application}
    Consider a loss function
    $$
        \mcal{L}(v) := \int \ell(v; x) \dx[\rho(x)]
    $$
    with $\ell$ satisfying the conditions of Proposition~\ref{prop:leibniz_integral_rule} and $\ell(v; x) = \tilde \ell(v(x) ; x)$ with $\tilde \ell : \mathbb{R} \times \mcal{X} \to \mathbb{R}$.
    Let $\tilde \ell'$ denote the derivative of $\ell'$ with respect to its first argument.
    Then the gradient of $\mcal{L}$ in $L^2(\rho)$ is given by
    $$
        \nabla_v\mcal{L}(v)
        = \tilde\ell'(v(\bullet); \bullet)
        .
    $$
\end{corollary}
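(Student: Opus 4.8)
The plan is to chain together three ingredients: the Leibniz integral rule of Proposition~\ref{prop:leibniz_integral_rule} to differentiate under the integral sign, an elementary one-dimensional chain rule to evaluate the integrand's derivative pointwise, and the Riesz representation theorem in $L^2(\rho)$ to pass from the Fréchet derivative $D_v\mathcal{L}(v)$ to its representative $\nabla_v\mathcal{L}(v)$.

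First I would check that the standing hypotheses on $\ell$ are precisely those required by Proposition~\ref{prop:leibniz_integral_rule}, so that for all $v,h$,
\[
    \left\langle D_v\mathcal{L}(v),\,h\right\rangle
    = \int \left\langle D_v\ell(v;x),\,h\right\rangle \,\mathrm{d}\rho(x).
\]
Next I would identify $D_v\ell(v;x)$ for almost every $x$. Since $\ell(\cdot;x)$ is assumed Fréchet differentiable, its Fréchet derivative at $v$ in the direction $h$ equals the corresponding Gâteaux derivative, and because $\ell(v;x)=\tilde\ell(v(x);x)$ with $s\mapsto v(x)+s\,h(x)$ affine, the ordinary chain rule for $\tilde\ell(\cdot;x):\mathbb{R}\to\mathbb{R}$ gives
\[
    \left\langle D_v\ell(v;x),\,h\right\rangle
    = \left.\frac{\mathrm{d}}{\mathrm{d}s}\right|_{s=0}\tilde\ell\big(v(x)+s\,h(x);x\big)
    = \tilde\ell'(v(x);x)\,h(x).
\]
Substituting this into the Leibniz identity yields
\[
    \left\langle D_v\mathcal{L}(v),\,h\right\rangle
    = \int \tilde\ell'(v(x);x)\,h(x)\,\mathrm{d}\rho(x)
    = \big(\tilde\ell'(v(\bullet);\bullet),\,h\big)_{L^2(\rho)}
\]
for all $h\in L^2(\rho)$. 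Since $D_v\mathcal{L}(v)$ is a bounded functional on $L^2(\rho)$, the linear map $h\mapsto\int\tilde\ell'(v(x);x)h(x)\,\mathrm{d}\rho(x)$ is bounded, so $\tilde\ell'(v(\bullet);\bullet)\in L^2(\rho)$, and by Riesz it is exactly $\nabla_v\mathcal{L}(v)$, which is the claim.

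The main obstacle is the pointwise identification of $D_v\ell(v;x)$: point evaluation $v\mapsto v(x)$ is not continuous on $L^2(\rho)$, so one cannot naively compose a Fréchet derivative on $\mathbb{R}$ with the (unbounded) evaluation map. The clean way around this is to not re-derive Fréchet differentiability of $\ell(\cdot;x)$ (it is a hypothesis) but merely to compute the already-existing derivative: the Gâteaux derivative along $h$ is the elementary scalar computation above, and it must coincide with the Fréchet derivative whenever the latter exists. A secondary, routine point is the measurability and integrability of $x\mapsto\tilde\ell'(v(x);x)h(x)$; this follows from the domination hypothesis $\|D_v\ell(v;x)\|_{\mathcal{V}^*}\le\alpha(x)$ used in Proposition~\ref{prop:leibniz_integral_rule} together with Cauchy--Schwarz once square-integrability of $\tilde\ell'(v(\bullet);\bullet)$ is known, which either is assumed directly or follows from a growth bound on $\tilde\ell'$ in its first argument.
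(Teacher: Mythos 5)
Your proposal is correct and follows essentially the same route as the paper's proof: apply Proposition~\ref{prop:leibniz_integral_rule} to differentiate under the integral, identify $\inner{D_v\ell(v;x), h} = \tilde\ell'(v(x);x)h(x)$ pointwise, and read off the Riesz representative from the resulting $L^2(\rho)$ inner product. The paper states the pointwise identification without comment, whereas you justify it via the Gâteaux derivative and flag the measurability and integrability points; this is additional care on the same argument, not a different approach.
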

\begin{proof}
    By Proposition~\ref{prop:leibniz_integral_rule}, it holds for any $v,h\in L^2(\rho)$, that
    \begin{align}
        \pars{\nabla_v \mcal{L}(v), h}
        &= \inner{D_v \mcal{L}(v), h}
        = \int \inner{D_v \ell(v; x), h} \dx[\rho(x)] \\
        &= \int \tilde\ell'(v(x); x) h(x) \dx[\rho(x)]
        = \pars{\tilde\ell'(v(\bullet); \bullet), h} . 
    \end{align}
\end{proof}

\section{Non- and quasi-projection}
\label{app:quasi-projection}

In this section, we let $\mcal{B}_t = \{b_1,...,b_d\}$ be a generating system of the $\mcal{F}_t$-measurable space $\mcal{T}_t$ of dimension $d_t\le d$. We let $G \in \mathbb{R}^{d\times d}$ be the corresponding Gramian matrix, with $G_{kl} := (b_k, b_l)$. When $\mcal{B}_t$  is an orthonormal basis, which is the case for quasi-projection, we have $d=d_t$ and $G$ is equal to the identity matrix.  We let  $\lambda_*(G)$ denote the smallest strictly positive  eigenvalue of $G$, and $\lambda^*(G)$ denote the largest eigenvalue of $G$.
The subsequent lemmas are generalisations of the basic results from~\cite{cohen2017optimal} and~\cite{trunschke2023convergence}.

\begin{lemma}
\label{lem:quasi-projection_unbiased}
    Assume that  $x_1, \ldots, x_n$ are i.i.d.\ samples from $\mu_t$ given $\mcal{F}_t$ and let $P_t^n$ the non-projection or quasi-projection  defined in~\eqref{eq:non-projection} or~\eqref{eq:quasi-projection} respectively.
    Then, it holds for any $g\in\mcal{H}$ that
    \begin{align}
        \mbb{E}\bracs*{\pars{g, P_t^n g} \mid \mcal{F}_t} &\ge \lambda_*\pars{G} \norm{P_tg}^2 .
    \end{align}
    This bound is tight, and equality holds when $G$ is the identity matrix.
\end{lemma}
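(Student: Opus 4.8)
The plan is to compute the conditional expectation $\mbb{E}[(g, P_t^n g)\mid\mcal{F}_t]$ directly from the definitions and then bound it from below. First I would write $P_t^n g = \sum_{k} \hat\eta_k b_k$ with $\hat\eta_k = (g, b_k)_n = \frac1n\sum_{i=1}^n w_t(x_i)(L_{x_i}g)^\intercal(L_{x_i}b_k)$, and recall that, because the $x_i$ are i.i.d.\ from $\mu_t$ with $w_t\dd\mu_t = \dd\rho$ on $\supp(\mcal{T}_t)$, we have the unbiasedness $\mbb{E}[\hat\eta_k\mid\mcal{F}_t] = (g, b_k) =: \eta_k$ (this is the content of Lemma~\ref{lem:quasi-projection_unbiased}'s sibling on unbiasedness, or a one-line computation). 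Then
$$
    \mbb{E}\bracs*{(g, P_t^n g)\mid\mcal{F}_t}
    = \sum_{k,l} \mbb{E}[\hat\eta_k\mid\mcal{F}_t]\, (b_k, b_l) \cdot [\text{careful}]
$$
— actually one must be slightly careful: $(g, P_t^n g) = \sum_k \hat\eta_k (g, b_k) = \sum_k \hat\eta_k \eta_k$ since the outer inner product is deterministic. Hence $\mbb{E}[(g, P_t^n g)\mid\mcal{F}_t] = \sum_k \eta_k^2 = \|\eta\|_2^2$ where $\eta = (\eta_k)_k \in \mbb{R}^d$.

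The next step is to relate $\|\eta\|_2^2$ to $\|P_t g\|^2$. Writing $P_t g = \sum_k \zeta_k b_k$ for the true $\mcal{H}$-orthogonal projection (with $\zeta$ determined by the normal equations $G\zeta = \eta$, since $(P_t g, b_l) = (g, b_l) = \eta_l$ gives $\sum_k \zeta_k G_{kl} = \eta_l$), we get $\|P_t g\|^2 = \zeta^\intercal G \zeta = \eta^\intercal G^+ \eta$, using $\zeta = G^+\eta$ (valid because $\eta$ lies in the range of $G$). Therefore the claimed inequality $\|\eta\|_2^2 \ge \lambda_*(G)\,\eta^\intercal G^+\eta$ is purely a linear-algebra statement: decomposing $\eta$ in an eigenbasis of $G$ and noting $\eta$ has no component in $\ker G$, we have $\eta^\intercal G^+\eta = \sum_{j:\lambda_j>0} \frac{\langle\eta,v_j\rangle^2}{\lambda_j} \le \frac{1}{\lambda_*(G)}\sum_j \langle\eta,v_j\rangle^2 = \frac{\|\eta\|_2^2}{\lambda_*(G)}$, which rearranges to the desired bound. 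Equality in the eigenvalue step holds iff $\eta$ is supported on the $\lambda_*(G)$-eigenspace; in particular when $G = I$ every such relation is an equality and $\mbb{E}[(g,P_t^n g)\mid\mcal{F}_t] = \|\eta\|_2^2 = \|P_t g\|^2$, giving tightness and the last sentence of the statement.

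The main obstacle — really the only subtle point — is the bookkeeping around the generalized inverse $G^+$ and the fact that the generating system $\mcal{B}_t$ may be linearly dependent ($d \ge d_t$), so that $G$ is singular. One must check that $\eta = ((g,b_k))_k$ always lies in $\operatorname{range}(G) = \operatorname{range}(G^+)$, which follows since $b_k - \sum$(relations) annihilating combinations of the $b_k$ also annihilate $\eta$ by linearity of the inner product; equivalently $\eta_k$ depends on $g$ only through $P_t g$, and $P_t g$ lives in $\mcal{T}_t = \operatorname{span}\mcal{B}_t$. Once that is pinned down, the identity $\|P_t g\|^2 = \eta^\intercal G^+\eta$ and the eigenvalue estimate are routine. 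I would present it in the order: (i) unbiasedness of $\hat\eta_k$; (ii) the exact formula $\mbb{E}[(g,P_t^n g)\mid\mcal{F}_t] = \|\eta\|_2^2$; (iii) the identity $\|P_t g\|^2 = \eta^\intercal G^+\eta$; (iv) the eigenvalue inequality and the equality discussion.
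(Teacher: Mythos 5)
Your proposal is correct and follows essentially the same route as the paper's proof: unbiasedness of the empirical coefficients, the identity $\norm{P_t g}^2 = \eta^\intercal G^+ \eta$, and the spectral estimate on $G$ (the paper writes your $\norm{\eta}_2^2$ as $\eta^\intercal G^+ G\,\eta = \eta^\intercal U U^\intercal \eta$, which coincides with yours precisely because $\eta \in \operatorname{range}(G)$ — the point you correctly flag and justify). No gaps.
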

\begin{proof}
    Using linearity, we can compute
    \begin{equation}
        \mbb{E}\bracs*{P^n_t g \,\big|\, \mcal{F}_t}
        = \mbb{E}\bracs*{\sum_{k=1}^{d} \pars{g, b_k}_n b_k \,\Bigg|\, \mcal{F}_t}
        = \sum_{k=1}^{d} \mbb{E}\bracs*{\pars{g, b_k}_n \,\big|\, \mcal{F}_t} b_k
        = \sum_{k=1}^{d} \pars{g, b_k} b_k .
    \end{equation}
    Now let $\eta,\theta\in\mbb{R}^{d}$ be defined by $\eta_k := (g, b_k)$ and $\theta := G^+\eta$, with $G^+$ the Moore-Penrose pseudo-inverse of $G$, and observe that
    $$
        \mbb{E}\bracs*{P_t^n g \mid \mcal{F}_t} = \sum_{k=1}^{d} \eta_k b_k
        \qquad\text{while}\qquad
        P_t g = \sum_{k=1}^d \theta_k b_k .
    $$
    This means that
    $$
        \mbb{E}\bracs*{\pars{g, P_t^n g} \mid \mcal{F}_t}
        = \pars{P_t g, \mbb{E}\bracs*{P_t^n g \mid \mcal{F}_t}}
        = \sum_{k,l=1}^{d} \eta_k\theta_k\pars{b_k, b_l}
        = \theta^\intercal G \eta
        = \eta^\intercal G^+G \eta .
    $$
    Let $G = U\Lambda U^\intercal$ be the compact spectral decomposition with $U\in\mbb{R}^{d\times r}$ and $\Lambda\in\mbb{R}^{r\times r}$ and let $\lambda_* := \min\operatorname{diag}(\Lambda) = \lambda_*(G) > 0$.
    Then $G^+ = U\Lambda^{-1} U^\intercal$ and
    \begin{align}
        \eta^\intercal G^+ G \eta
        &= \eta^\intercal U U^\intercal \eta
        \ge \lambda_* \eta^\intercal U \Lambda^{-1} U^\intercal \eta
        = \lambda_* \eta^\intercal G^+ \eta
        = \lambda_* \eta^\intercal G^+ G G^+ \eta
        = \lambda_* \theta^\intercal G \theta
        = \lambda_* \norm{P_t g}^2 .
    \end{align}
\end{proof}

\begin{lemma}
\label{lem:quasi-projection_norm}
    Assume that  $x_1, \ldots, x_n$ are i.i.d.\ samples from $\mu_t$ given $\mcal{F}_t$ and let $P_t^n$ be the non-projection or quasi-projection    defined in~\eqref{eq:non-projection} or~\eqref{eq:quasi-projection} respectively.
    Define $G_{kl} := (b_k, b_l)$ and let $\lambda^*(G)$ denote the largest eigenvalue of $G$.
    Moreover, let $\mfrak{K}_t(x) := \lambda^*\pars*{\sum_{k=1}^{d_t} \pars{L_{x}b_k}\pars{L_xb_k}^\intercal}$ and $k_t := \norm{w\mfrak{K}_t}_{L^\infty(\rho)}$.
    Then, it holds for any $g\in\mcal{H}$ that
    \begin{align}
        \mbb{E}\bracs*{\norm{P_t^n g}^2\,\big|\,\mcal{F}_t}
        &\le \Big(1 - \frac1n\Big)\lambda^*(G)^2 \norm{P_t g}^2 + \frac{\lambda^*(G)}{n}\int w(x)\mfrak{K}(x) \norm{L_x g}_2^2 \dx[\rho(x)] \\
        &\le \Big(1 - \frac1n\Big)\lambda^*(G)^2 \norm{P_t g}^2 + \frac{\lambda^*(G)k_t}{n}\norm{g}^2 .
    \end{align}
    This bound is tight, and equality holds when $G$ is the identity matrix.
\end{lemma}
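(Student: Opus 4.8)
The plan is to compute $\mathbb{E}[\norm{P_t^n g}^2 \mid \mcal{F}_t]$ directly from the definition and then split it into a ``mean'' part and a ``variance'' part. Writing $P_t^n g = \sum_{k=1}^d \hat\zeta_k b_k$ with $\hat\zeta_k = (g,b_k)_n = \tfrac1n\sum_{i=1}^n w(x^i)(L_{x^i}b_k)^\intercal(L_{x^i}g)$, we have $\norm{P_t^n g}^2 = \hat\zeta^\intercal G \hat\zeta$. First I would take the conditional expectation and use the bias–variance decomposition $\mathbb{E}[\hat\zeta\hat\zeta^\intercal\mid\mcal{F}_t] = \mathbb{E}[\hat\zeta\mid\mcal{F}_t]\mathbb{E}[\hat\zeta\mid\mcal{F}_t]^\intercal + \operatorname{Cov}(\hat\zeta\mid\mcal{F}_t)$. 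From Lemma~\ref{lem:quasi-projection_unbiased}'s proof we already know $\mathbb{E}[\hat\zeta_k\mid\mcal{F}_t] = \eta_k = (g,b_k)$, so the first term contributes $\eta^\intercal G\eta = \norm{P_t g}^2 \cdot (\text{something})$; more precisely $\eta^\intercal G \eta \le \lambda^*(G)\,\eta^\intercal \eta$ and, using $\theta = G^+\eta$ and $P_t g = \sum\theta_k b_k$, one relates $\eta^\intercal\eta$ to $\norm{P_tg}^2$ via another factor of $\lambda^*(G)$, giving the $\lambda^*(G)^2\norm{P_tg}^2$ prefactor.

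Next I would handle the covariance term. Since the $x^i$ are i.i.d.\ given $\mcal{F}_t$, $\operatorname{Cov}(\hat\zeta\mid\mcal{F}_t) = \tfrac1n\operatorname{Cov}(\zeta^{(1)}\mid\mcal{F}_t)$ where $\zeta^{(1)}_k := w(x^1)(L_{x^1}b_k)^\intercal(L_{x^1}g)$. Bounding $\operatorname{Cov} \preceq \mathbb{E}[\zeta^{(1)}(\zeta^{(1)})^\intercal\mid\mcal{F}_t]$ gives, after tracing against $G$,
\begin{align}
    \tfrac1n\,\mathbb{E}\bracs[\big]{\hat\zeta^\intercal G\hat\zeta \text{ (variance part)}\mid\mcal{F}_t}
    &\le \tfrac1n\int w(x)^2 \pars[\big]{L_xg}^\intercal\pars[\Big]{\textstyle\sum_{k,l}(L_xb_k)(L_xb_l)^\intercal G_{kl}\text{-quadratic form}}\pars[\big]{L_xg}\dx[\mu_t(x)].
\end{align}
Using $w\dx[\mu_t] = \dx[\rho]$ and the Loewner bound $\sum_{k}(L_xb_k)(L_xb_k)^\intercal \preceq \mfrak{K}_t(x)I$ together with $\sum_{k,l}\lvert G_{kl}\rvert\text{-type estimates} \le \lambda^*(G)$, this is bounded by $\tfrac{\lambda^*(G)}{n}\int w(x)\mfrak{K}_t(x)\norm{L_xg}_2^2\dx[\rho(x)]$. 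Finally the crude bound pulls $\norm{w\mfrak{K}_t}_{L^\infty(\rho)} = k_t$ out of the integral and uses $\int\norm{L_xg}_2^2\dx[\rho(x)] = \norm{g}^2$ to reach $\tfrac{\lambda^*(G)k_t}{n}\norm{g}^2$. Combining the mean and variance parts, and noting that the $\tfrac1n$ of the covariance scaling produces the $(1-\tfrac1n)$ coefficient on the $\norm{P_tg}^2$ term once the mean part is merged correctly, yields both displayed inequalities. The $G = I$ tightness claim follows because then every Loewner and eigenvalue inequality above is an equality.

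The main obstacle I anticipate is the careful bookkeeping in the covariance/second-moment step when $\mcal{B}_t$ is only a generating system (so $G$ may be singular and rank-deficient): one must be attentive that $P_t^n g$ still lives in $\mcal{T}_t = \operatorname{span}(\mcal{B}_t)$, that the right quadratic form in $\hat\zeta$ is $\hat\zeta^\intercal G\hat\zeta$ and not $\norm{\hat\zeta}_2^2$, and that the pseudo-inverse relations from Lemma~\ref{lem:quasi-projection_unbiased} are invoked exactly where needed to convert between $\eta$, $\theta$, and $\norm{P_tg}$. Getting the interplay of the two factors of $\lambda^*(G)$ right — one from bounding $\eta^\intercal G\eta$, one from relating $\norm\eta_2$ back to $\norm{P_tg}$ — is the only genuinely delicate point; the rest is the standard optimal-sampling variance estimate as in~\cite{cohen2017optimal,trunschke2023convergence}.
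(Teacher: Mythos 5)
Your proposal is correct and follows essentially the same route as the paper: your bias--variance split of $\mathbb{E}[\hat\zeta\hat\zeta^\intercal\mid\mathcal{F}_t]$ is exactly the paper's diagonal/off-diagonal decomposition of the double sum over sample indices, and the two spectral bounds $(\eta^n)^\intercal G\eta^n\le\lambda^*(G)\|\eta^n\|_2^2$ and $\|\eta\|_2^2\le\lambda^*(G)\,\eta^\intercal G^+\eta=\lambda^*(G)\|P_tg\|^2$ supply the two factors of $\lambda^*(G)$ just as you describe. The one point to be careful about is to keep the exact identity $\operatorname{Cov}(\hat\zeta\mid\mathcal{F}_t)=\tfrac1n\bigl(\mathbb{E}[\zeta^{(1)}(\zeta^{(1)})^\intercal\mid\mathcal{F}_t]-\eta\eta^\intercal\bigr)$ rather than the one-sided Loewner bound $\operatorname{Cov}\preceq\tfrac1n\mathbb{E}[\zeta^{(1)}(\zeta^{(1)})^\intercal\mid\mathcal{F}_t]$, since discarding the $-\eta\eta^\intercal$ term yields coefficient $1$ instead of the claimed $\bigl(1-\tfrac1n\bigr)$ on the $\lambda^*(G)^2\|P_tg\|^2$ term --- as your closing remark about merging the mean part already anticipates.
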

\begin{proof}
    Let $\eta^n, \eta, \theta\in\mbb{R}^{d}$ be defined by $\eta_k^n := (g, b_k)_n$, $\eta_k := (g, b_k)$ and $\theta := G^+\eta$ and recall that 
    $$
        \norm{P_t^n g}^2
        = (\eta^n)^\intercal G \eta^n
        \le \lambda^*(G) \norm{\eta^n}_2^2
        \qquad\text{and}\qquad
        \norm{\eta}_2^2
        \le \lambda^*(G) \pars{\eta^\intercal G^+ \eta}
        = \lambda^*(G) \pars{\theta^\intercal G \theta}
        = \lambda^*(G) \norm{P_t g}^2 .
    $$
    We start by considering a single term of the sum $\norm{\eta^n}_2^2 = \sum_{k=1}^{d} (g, b_k)_n^2$.
    It holds that
    \begin{align}
        \mbb{E}\bracs*{(g, b_k)_n^2 \,\big|\, \mcal{F}_t}
        &= \frac{1}{n^2} \sum_{i,j=1}^n \mbb{E}\bracs*{w\pars{x_i}(L_{x_i}g)^\intercal (L_{x_i}b_k)w\pars{x_j}(L_{x_j}g)^\intercal (L_{x_j}b_k) \,\big|\, \mcal{F}_t} \\
        &= \begin{multlined}[t]
            \frac{1}{n^2} \sum_{i\ne j=1}^n \mbb{E}\bracs*{w\pars{x_i}(L_{x_i}g)^\intercal (L_{x_i}b_k) \,\big|\, \mcal{F}_t} \mbb{E}\bracs*{w\pars{x_j}(L_{x_j}g)^\intercal (L_{x_j}b_k) \,\big|\, \mcal{F}_t} \\
            + \frac{1}{n^2} \sum_{i=1}^n \mbb{E}\bracs*{\pars*{w\pars{x_i}(L_{x_i}g)^\intercal (L_{x_i}b_k)}^2 \,\big|\, \mcal{F}_t}
        \end{multlined} \\
        &= \frac{n\pars{n-1}}{n^2} \pars{g, b_k}^2
        + \frac{1}{n} \int w(x) (L_x g)^\intercal (L_x b_k) (L_x b_k)^\intercal (L_x g) \dx[\rho(x)] ,
    \end{align}
    where the final equality follows from~\eqref{eq:measure_equality}.
    Summing this over $k$ yields
    \begin{align}
        \mbb{E}\bracs*{\norm{\eta^n}_2^2 \,\big|\, \mcal{F}_t}
        &= \pars*{1 - \frac{1}{n}} \norm{\eta}_2^2
        + \frac{1}{n} \int w(x) (L_x g)^\intercal \pars*{\sum_{k=1}^{d}  (L_x b_k) (L_x b_k)^\intercal} (L_x g) \dx[\rho(x)] \\
        &\le \pars*{1 - \frac{1}{n}} \norm{\eta}_2^2
        + \frac{1}{n} \int w(x) \mfrak{K}(x) \norm{L_x g}^2 \dx[\rho(x)] \\
        &\le \pars*{1 - \frac{1}{n}} \lambda^*(G) \norm{P_t g}^2 + \frac{k_t}{n}  \norm{g}^2, 
    \end{align}
    which ends the proof.
\end{proof}

\begin{lemma}
\label{lem:sgd_bias_variance}
    Assume that $x_1, \ldots, x_n$ are i.i.d.\ samples from $\mu_t$ given $\mcal{F}_t$.
    Let $\mfrak{K}_t(x) := \lambda^*\pars*{\sum_{k=1}^{d} \pars{L_{x}b_k}\pars{L_xb_k}^\intercal}$ and $k_t := \norm{w \mfrak{K}_t}_{L^\infty(\rho)}$.
    Then, the operator $P_t^n$ defined in equation~\eqref{eq:non-projection}  satisfies assumption~\eqref{eq:bbv} with constants 
    \begin{align}
        c_{\mathrm{bias},1} &= \lambda_*\pars{G} ,
        & c_{\mathrm{var},1} &= \tfrac{\lambda^*(G)^2(n-1) + \lambda^*(G) k_t}{n} , \\
        c_{\mathrm{bias},2} &= 0 ,
        & c_{\mathrm{var},2} &= \tfrac{\lambda^*(G) k_t}n .
    \end{align}
    These bounds are tight.
\end{lemma}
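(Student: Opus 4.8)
The plan is to derive the four constants directly from the two preceding lemmas (Lemma~\ref{lem:quasi-projection_unbiased} and Lemma~\ref{lem:quasi-projection_norm}), which already encapsulate all the required moment computations. First I would recall that assumption~\eqref{eq:bbv} requires a lower bound on $\mbb{E}\bracs*{\pars{g, P_t^n g}\mid\mcal{F}_t}$ and an upper bound on $\mbb{E}\bracs*{\norm{P_t^n g}^2\mid\mcal{F}_t}$, and that both are supplied almost verbatim by the lemmas once the right-hand sides are rewritten in terms of $\norm{P_t g}$ and $\norm{(I-P_t)g}$.

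For the bias bound, Lemma~\ref{lem:quasi-projection_unbiased} gives $\mbb{E}\bracs*{\pars{g, P_t^n g}\mid\mcal{F}_t}\ge\lambda_*(G)\norm{P_t g}^2$, which is exactly the form of the first line of~\eqref{eq:bbv} with $c_{\mathrm{bias},1}=\lambda_*(G)$ and $c_{\mathrm{bias},2}=0$. For the variance bound, Lemma~\ref{lem:quasi-projection_norm} gives
$$
    \mbb{E}\bracs*{\norm{P_t^n g}^2\mid\mcal{F}_t}
    \le\pars*{1-\tfrac1n}\lambda^*(G)^2\norm{P_t g}^2+\tfrac{\lambda^*(G)k_t}{n}\norm{g}^2 .
$$
The only remaining step is to split $\norm{g}^2=\norm{P_t g}^2+\norm{(I-P_t)g}^2$ by Pythagoras, collect the $\norm{P_t g}^2$ terms, and read off
$$
    c_{\mathrm{var},1}=\tfrac{\lambda^*(G)^2(n-1)+\lambda^*(G)k_t}{n},
    \qquad
    c_{\mathrm{var},2}=\tfrac{\lambda^*(G)k_t}{n}.
$$
Tightness follows because both lemmas state that equality holds when $G$ is the identity, in which case $\lambda_*(G)=\lambda^*(G)=1$ and no inequality in the chain above is strict (note the Pythagoras step is an identity), so the stated constants cannot be improved in general.

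I do not anticipate a genuine obstacle here; the lemma is essentially a bookkeeping corollary. The one point requiring a little care is making sure the bound in Lemma~\ref{lem:quasi-projection_norm} is applied to an arbitrary $g\in\mcal{H}$ (not just $g\in\mcal{T}_t$) so that the decomposition $\norm{g}^2=\norm{P_tg}^2+\norm{(I-P_t)g}^2$ is the relevant one, and confirming that the $\mfrak{K}_t$ and $k_t$ appearing in Lemma~\ref{lem:quasi-projection_norm} coincide with those in the statement of Lemma~\ref{lem:sgd_bias_variance} — which they do by definition in~\eqref{eq:def:christoffel}.

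\begin{proof}
    By Lemma~\ref{lem:quasi-projection_unbiased}, for any $g\in\mcal{H}$ we have $\mbb{E}\bracs*{\pars{g, P_t^n g}\mid\mcal{F}_t}\ge\lambda_*(G)\norm{P_tg}^2$, which is the first inequality in~\eqref{eq:bbv} with $c_{\mathrm{bias},1}=\lambda_*(G)$ and $c_{\mathrm{bias},2}=0$.
    By Lemma~\ref{lem:quasi-projection_norm} and the orthogonal decomposition $\norm{g}^2=\norm{P_tg}^2+\norm{(I-P_t)g}^2$,
    \begin{align}
        \mbb{E}\bracs*{\norm{P_t^n g}^2\mid\mcal{F}_t}
        &\le\pars*{1-\tfrac1n}\lambda^*(G)^2\norm{P_tg}^2+\tfrac{\lambda^*(G)k_t}{n}\pars*{\norm{P_tg}^2+\norm{(I-P_t)g}^2} \\
        &=\tfrac{\lambda^*(G)^2(n-1)+\lambda^*(G)k_t}{n}\norm{P_tg}^2+\tfrac{\lambda^*(G)k_t}{n}\norm{(I-P_t)g}^2 ,
    \end{align}
    which is the second inequality in~\eqref{eq:bbv} with $c_{\mathrm{var},1}=\tfrac{\lambda^*(G)^2(n-1)+\lambda^*(G)k_t}{n}$ and $c_{\mathrm{var},2}=\tfrac{\lambda^*(G)k_t}{n}$.
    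When $G$ is the identity matrix, $\lambda_*(G)=\lambda^*(G)=1$, both lemmas hold with equality, and the decomposition of $\norm{g}^2$ is an identity; hence all inequalities above are tight.
\end{proof}
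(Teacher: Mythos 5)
Your proposal is correct and follows essentially the same route as the paper's proof: the bias constants are read off from Lemma~\ref{lem:quasi-projection_unbiased}, and the variance constants follow from Lemma~\ref{lem:quasi-projection_norm} combined with the orthogonal decomposition $\norm{g}^2=\norm{P_tg}^2+\norm{(I-P_t)g}^2$. Your added remark on tightness (via the equality case $G=I$ in the two lemmas) is a small but harmless elaboration beyond what the paper writes out.
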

\begin{proof}
    The two bounds $c_{\mathrm{bias},1} = \lambda_*\pars{G}$ and $c_{\mathrm{bias},2} = 0$ follow directly from Lemma~\ref{lem:quasi-projection_unbiased} and Lemma~\ref{lem:quasi-projection_norm} implies
    \begin{align}
        \mbb{E}\bracs*{\norm{P_t^n g}^2\,\big|\,\mcal{F}_t}
        &\le \Big(1 - \frac1n\Big)\lambda^*(G)^2 \norm{P_t g}^2 + \frac{\lambda^*(G)k_t}{n}\norm{g}^2 \\
        &=  \frac{\lambda^*(G)^2(n-1) + \lambda^*(G)k_t}n \norm{P_t g}^2 + \frac{\lambda^*(G)k_t}{n}\norm{(I-P_t) g}^2 .
    \end{align}
\end{proof}

\section{Using the quasi-projection yields NGD}
\label{app:quasi_projection_relevance}

Suppose that $\mathcal{M} = \{v = F\pars\theta : \theta \in \mathbb{R}^d\}$ and $\mcal{T}_t := \operatorname{span}\braces{\varphi_k  : 1 \le k \le d}$ with $\varphi_k := \partial_{k} F\pars\theta$.
Define the Gramian matrix $H_{kl} := (\varphi_k, \varphi_l)$ and suppose that $b = H^+\varphi$ is an orthonormal basis for $\mcal{T}_t$.
Then, we can write
\begin{equation}
\label{eq:quasi-projection-system}
    \eta = H^+\zeta
    \quad\text{with}\quad
    \zeta_k = (g, \varphi_k)
    \quad\qquad\text{and}\qquad\quad
    \hat\eta = H^+\hat\zeta
    \quad\text{with}\quad
    \hat\zeta_k = (g, \varphi_k)_n .
\end{equation}
The quasi-projection's coefficients $\hat\eta$ are hence an estimate of the orthogonal projection's coefficients $\eta$, where $\zeta$ is replaced by the Monte Carlo estimate $\hat\zeta$.
This demonstrates that the algorithm presented in equation~\eqref{eq:descent_scheme} can be seen as a natural gradient descent, as defined in~\cite{nurbekyan2023efficient}.

\section{Optimal sampling}
\label{app:optimal_sampling}

\begin{lemma}
\label{lem:optimal_sampling_density}
    Let $\braces{b_k}_{k=1}^{d_t}$ be an $\mcal{H}$-orthonormal basis for the $d_t$-dimensional subspace $\mcal{T}_t\subseteq \mcal{H}$.
    Recall that
    $$
        \mfrak{K}_t(x) := \lambda^*\pars*{\sum_{k=1}^{d_t} \pars{L_{x}b_k}\pars{L_xb_k}^\intercal}
    $$
    and denote by
    $$
        \mfrak{K}_{\mcal{T}_t}(x) := \sup_{v\in\mcal{T}_t} \frac{\norm{L_x v}_2^2}{\norm{v}}
    $$
    the \emph{generalised inverse Christoffel function} (or \emph{variation function}, cf.~\cite{trunschke2023convergence}). 
    It holds $\mfrak{K}_t = \mfrak{K}_{\mcal{T}_t}$. 
    Moreover, define 
    $$
        \tilde{\mfrak{K}}_t(x)
        := \operatorname{trace}\pars*{\sum_{k=1}^{d_t} \pars{L_{x}b_k}\pars{L_xb_k}^\intercal}
        = \sum_{k=1}^{d_t}  \Vert L_{x}b_k \Vert_2^2.
    $$
    It holds $\mfrak{K}_t \le \tilde{\mfrak{K}}_t$, with equality if $l=1$ in~\eqref{eq:norm}.
    Moreover, it holds that $\Vert \tilde{\mfrak{K}}_t \Vert_{L^1(\rho)} =  \int \tilde{\mfrak{K}}_t \dx[\rho] = d_t$.
\end{lemma}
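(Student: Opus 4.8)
The plan is to reduce all three assertions to elementary linear algebra by introducing, for each fixed $x\in\mcal{X}$, the \emph{feature matrix} $B(x)\in\mbb{R}^{l\times d_t}$ whose $k$-th column is $L_xb_k$. Then $\sum_{k=1}^{d_t}\pars{L_xb_k}\pars{L_xb_k}^\intercal = B(x)B(x)^\intercal$, so $\mfrak{K}_t(x) = \lambda^*\pars{B(x)B(x)^\intercal}$, while $\tilde{\mfrak{K}}_t(x) = \operatorname{trace}\pars{B(x)B(x)^\intercal}$.

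To prove $\mfrak{K}_t = \mfrak{K}_{\mcal{T}_t}$, I would use that every $v\in\mcal{T}_t$ has a unique expansion $v = \sum_{k=1}^{d_t}c_kb_k$ with $c\in\mbb{R}^{d_t}$; since $\braces{b_k}$ is $\mcal{H}$-orthonormal, $\norm{v}^2 = \norm{c}_2^2$, while linearity of $L_x$ gives $L_xv = B(x)c$. Hence
$$
    \mfrak{K}_{\mcal{T}_t}(x)
    = \sup_{c\in\mbb{R}^{d_t}\setminus\braces{0}} \frac{\norm{B(x)c}_2^2}{\norm{c}_2^2}
    = \lambda^*\pars{B(x)^\intercal B(x)} .
$$
Since $B(x)B(x)^\intercal$ and $B(x)^\intercal B(x)$ have the same nonzero eigenvalues, their largest eigenvalues coincide, so $\mfrak{K}_t(x) = \mfrak{K}_{\mcal{T}_t}(x)$ for every $x$.

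For the bound $\mfrak{K}_t\le\tilde{\mfrak{K}}_t$, note that $M(x) := B(x)B(x)^\intercal$ is symmetric positive semidefinite, so all its eigenvalues are nonnegative and $\lambda^*\pars{M(x)}\le\sum_i\lambda_i\pars{M(x)} = \operatorname{trace}\pars{M(x)} = \tilde{\mfrak{K}}_t(x)$. If $l=1$ then $M(x)$ is a $1\times 1$ matrix, whose unique eigenvalue equals its trace, giving equality. Finally, since each $\norm{L_xb_k}_2^2 = \pars{L_xb_k}^\intercal\pars{L_xb_k}$ is nonnegative, $\tilde{\mfrak{K}}_t\ge 0$ and $\norm{\tilde{\mfrak{K}}_t}_{L^1(\rho)} = \int\tilde{\mfrak{K}}_t\dx[\rho]$; by linearity of the integral and the definition~\eqref{eq:norm} of the inner product together with $\mcal{H}$-orthonormality,
$$
    \int\tilde{\mfrak{K}}_t\dx[\rho]
    = \sum_{k=1}^{d_t}\int\pars{L_xb_k}^\intercal\pars{L_xb_k}\dx[\rho(x)]
    = \sum_{k=1}^{d_t}\pars{b_k,b_k}
    = d_t .
$$

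Since the whole argument is routine, I do not expect a real obstacle; the only point requiring a little care is the coefficient identification $v\leftrightarrow c$ on $\mcal{T}_t$ combined with the identity $\lambda^*\pars{BB^\intercal} = \lambda^*\pars{B^\intercal B}$, which is precisely what turns the eigenvalue definition of $\mfrak{K}_t$ into the supremum definition of the generalised inverse Christoffel function.
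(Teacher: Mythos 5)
Your proposal is correct and follows essentially the same route as the paper: both introduce the feature matrix $B(x)$ with columns $L_xb_k$, identify $\mfrak{K}_{\mcal{T}_t}(x)$ with $\lambda^*\pars{B(x)^\intercal B(x)}$ via the orthonormal coefficient identification, and conclude by $\lambda^*\pars{B B^\intercal}=\lambda^*\pars{B^\intercal B}$, the eigenvalue-versus-trace bound for positive semidefinite matrices, and exchanging trace with the integral (equivalently, summing $\pars{b_k,b_k}=1$) for the $L^1$ norm. No gaps.
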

\begin{proof}
    Let $B_x \in \mathbb{R}^{l\times d_t}$ be the matrix with entries $[B_x]_{jk} = (L_xb_k)_j$. 
    To show the first claim, we compute
    $$
        \mfrak{K}_{\mcal{T}_t}\pars{x}
        := \sup_{\substack{v\in\mcal{T}_t\\\norm{v}=1}} \norm{L_x v}_2^2
        = \sup_{\substack{\bm{v}\in\mbb{R}^{d_t}\\\norm{\bm{v}}_2=1}} \bm{v}^\intercal B_x^\intercal B_x \bm{v}
        = \lambda^*\pars{B_x^\intercal B_x}
        = \norm{B_x}_{2}^2
        = \norm{B_x^\intercal}_{2}^2
        = \lambda^*\pars{B_xB_x^\intercal}
    $$
    and observe that
    $$
        \mfrak{K}_t(x)
        = \lambda^*\pars*{\sum_{k=1}^{d_t} \pars{L_{x}b_k}\pars{L_xb_k}^\intercal}
        = \lambda^*\pars*{B_x B_x^\intercal} .
    $$
    The second claim $\mfrak{K}_t(x) \le \tilde{\mfrak{K}}_t(x)$ is trivial while the last claim follows from 
    \begin{align}
        \int \tilde{\mfrak{K}}_{t} \dx[\rho]
        &
        = \int \operatorname{trace}\pars{B_x^\intercal B_x} \dx[\rho(x)]
        = \operatorname{trace}\pars*{\int B_x^\intercal B_x \dx[\rho(x)]}
        = \operatorname{trace}\pars*{I_{d_t}}
        = d_t .
    \end{align}
\end{proof}

\section{Least squares projection}
\label{app:least-squares-projection}

\begin{lemma}
\label{lemma:QBP_bias}
    Let $\delta\in(0,1)$ and assume that $x_1, \ldots, x_n$ are i.i.d.\ samples from $ \mu_t$ given  $\mcal{F}_t$, conditioned to satisfy the event $\mcal{S}_\delta = \bracs{\norm{I - \hat{G}}_2\le \delta}$.
    Moreover, assume the the probability of the event $\mcal{S}_\delta$ is positive $\mbb{P}\pars{S_\delta\,|\,\mcal{F}_t} = p_{\mcal{S}_\delta} > 0$ and let $\mfrak{K}_t(x) := \lambda^*\pars*{\sum_{k=1}^{d_t} \pars{L_{x}b_k}\pars{L_xb_k}^\intercal}$ and $k_t := \norm{w \mfrak{K}_t}_{L^\infty(\rho)}$.
    Then, the operator $P_t^n$ defined in equation~\eqref{eq:least-squares-projection} satisfies assumption~\eqref{eq:bbv} with constants
    $$
        c_{\mathrm{bias},1} = 1,
        \quad
        c_{\mathrm{bias},2} = \tfrac{\sqrt{k_t}}{(1-\delta)\sqrt{p_{\mcal{S}_\delta}n}} ,
        \quad
        c_{\mathrm{var},1} = \tfrac{1}{(1-\delta)^2p_{\mcal{S}_\delta}}\tfrac{n-1+k_t}{n}
        \quad\text{and}\quad
        c_{\mathrm{var},2} = \tfrac{1}{(1-\delta)^2p_{\mcal{S}_\delta}}\tfrac{k_t}{n} .
    $$
\end{lemma}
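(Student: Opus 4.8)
\textbf{Plan of proof for Lemma~\ref{lemma:QBP_bias}.}
The strategy is to compare the conditioned least squares projection $P_t^n$ on the event $\mathcal{S}_\delta$ with the unbiased quasi-projection $Q_t^n$ from Section~\ref{sec:quasi-projection} (for which Lemma~\ref{lem:sgd_bias_variance} gives sharp bias and variance constants), and to pay for the conditioning through the factor $p_{\mathcal{S}_\delta}^{-1}$ and for the Gram estimation through $(1-\delta)^{-1}$. First I would fix the $\mathcal{H}$-orthonormal basis $\mathcal{B}_t = \{b_k\}$ with respect to which $\hat G$ is estimated, so that $Q_t^n g = \sum_k \hat\eta_k b_k$ with $\hat\eta_k = (g,b_k)_n$, while $P_t^n g = \sum_k \doublehat\eta_k b_k$ with $\doublehat\eta = \hat G^+ \hat\eta$. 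On $\mathcal{S}_\delta$ we have $\|I - \hat G\| \le \delta < 1$, so $\hat G$ is invertible, $\|\hat G^{-1}\| \le (1-\delta)^{-1}$, and in fact $P_t^n$ is the $(\cdot,\cdot)_n$-orthogonal projection onto $\mathcal{T}_t$; in particular $P_t^n P_t^n = P_t^n$ and $P_t^n b_k = b_k$. The key algebraic identities I would record are $\doublehat\eta = \hat G^{-1}\hat\eta$, $\|P_t^n g\|^2 = \doublehat\eta^\intercal G \doublehat\eta = \|\doublehat\eta\|_2^2$ (since $G = I$), and $(g, P_t^n g) = (P_t g, P_t^n g) = \eta^\intercal \doublehat\eta$ where $\eta_k = (g,b_k)$, $P_t g = \sum_k \eta_k b_k$.

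\textbf{Variance bound.} Since $P_t^n g = \sum_k \doublehat\eta_k b_k$ with $\doublehat\eta = \hat G^{-1}\hat\eta$, on $\mathcal{S}_\delta$ we get $\|P_t^n g\|^2 = \|\doublehat\eta\|_2^2 \le (1-\delta)^{-2}\|\hat\eta\|_2^2 = (1-\delta)^{-2}\|Q_t^n g\|^2$. Taking conditional expectation and using $\mathbb{E}[\,\cdot\mid\mathcal{F}_t, \mathcal{S}_\delta] \le p_{\mathcal{S}_\delta}^{-1}\mathbb{E}[\,\cdot\,\mathbf{1}_{\mathcal{S}_\delta}\mid\mathcal{F}_t] \le p_{\mathcal{S}_\delta}^{-1}\mathbb{E}[\,\cdot\mid\mathcal{F}_t]$ for the nonnegative quantity $\|Q_t^n g\|^2$, I obtain
\begin{equation}
\mathbb{E}\bracs*{\norm{P_t^n g}^2 \mid \mathcal{F}_t, \mathcal{S}_\delta}
\le \tfrac{1}{(1-\delta)^2 p_{\mathcal{S}_\delta}}\, \mathbb{E}\bracs*{\norm{Q_t^n g}^2 \mid \mathcal{F}_t}
\le \tfrac{1}{(1-\delta)^2 p_{\mathcal{S}_\delta}}\pars*{\tfrac{n-1+k_t}{n}\norm{P_t g}^2 + \tfrac{k_t}{n}\norm{(I-P_t)g}^2},
\end{equation}
using Lemma~\ref{lem:sgd_bias_variance} (with $G = I$, hence $\lambda^*(G)=1$) for the last step. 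This gives $c_{\mathrm{var},1}$ and $c_{\mathrm{var},2}$ as claimed.

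\textbf{Bias bound.} Write $(g, P_t^n g) = \eta^\intercal\doublehat\eta = \eta^\intercal\hat\eta + \eta^\intercal(\doublehat\eta - \hat\eta)$. The first term has conditional expectation $\mathbb{E}[\eta^\intercal\hat\eta\mid\mathcal{F}_t] = \eta^\intercal\eta = \|P_t g\|^2$, but this is the \emph{unconditional} expectation; I need to handle the conditioning and the correction $\doublehat\eta - \hat\eta = (\hat G^{-1} - I)\hat\eta = \hat G^{-1}(I - \hat G)\hat\eta$. On $\mathcal{S}_\delta$, $\|\doublehat\eta - \hat\eta\|_2 \le \tfrac{\delta}{1-\delta}\|\hat\eta\|_2$, but a more careful route gives the stated constants: split $\eta = \eta_{\parallel}$ into its action and bound $|\eta^\intercal\doublehat\eta - \|P_t g\|^2|$ in terms of the sampling fluctuation $\|\hat\eta - \eta\|_2$ and $\|(I-P_t)g\|$. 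Specifically, using that $P_t^n$ fixes $P_t g$ on $\mathcal{S}_\delta$ one writes $\mathbb{E}[(g,P_t^n g)\mid\mathcal{F}_t,\mathcal{S}_\delta] \ge \|P_t g\|^2 - \mathbb{E}[\,|\,(P_t g, P_t^n (I-P_t)g)\,|\mid\mathcal{F}_t,\mathcal{S}_\delta]$, and the cross term is controlled by Cauchy--Schwarz together with the variance estimate for $P_t^n$ applied to $(I-P_t)g$, whose $P_t$-component vanishes, yielding $\mathbb{E}[\|P_t^n(I-P_t)g\|^2\mid\mathcal{F}_t,\mathcal{S}_\delta] \le \tfrac{1}{(1-\delta)^2 p_{\mathcal{S}_\delta}}\tfrac{k_t}{n}\|(I-P_t)g\|^2$. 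Hence
\begin{equation}
\mathbb{E}\bracs*{(g,P_t^n g)\mid\mathcal{F}_t,\mathcal{S}_\delta}
\ge \norm{P_t g}^2 - \norm{P_t g}\cdot \tfrac{\sqrt{k_t}}{(1-\delta)\sqrt{p_{\mathcal{S}_\delta}\,n}}\norm{(I-P_t)g},
\end{equation}
which is exactly assumption~\eqref{eq:bbv} with $c_{\mathrm{bias},1}=1$ and $c_{\mathrm{bias},2} = \tfrac{\sqrt{k_t}}{(1-\delta)\sqrt{p_{\mathcal{S}_\delta}n}}$.

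\textbf{Main obstacle.} The delicate point is the bias estimate: one must be careful that conditioning on $\mathcal{S}_\delta$ destroys the exact unbiasedness $\mathbb{E}[\hat\eta\mid\mathcal{F}_t] = \eta$, so one cannot simply take expectations termwise. The clean way around this is to exploit the projection property $P_t^n b_k = b_k$ valid pointwise on $\mathcal{S}_\delta$ (not merely in expectation), decompose $g = P_t g + (I-P_t)g$, note $P_t^n P_t g = P_t g$ on $\mathcal{S}_\delta$, so that $(g,P_t^n g) = \|P_t g\|^2 + (P_t g, P_t^n(I-P_t)g) + ((I-P_t)g, P_t^n g)$ with the last term $\ge 0$ (it equals $\|P_t^n g\|^2 - \|P_t g\|^2 \ge$ ... — or simply drop it after checking sign via $(I-P_t)g \perp \mathcal{T}_t \ni P_t^n g$, giving $((I-P_t)g, P_t^n g)=0$), reducing everything to bounding $\mathbb{E}[(P_t g, P_t^n(I-P_t)g)\mid\mathcal{F}_t,\mathcal{S}_\delta]$ from below, which is done by Cauchy--Schwarz and the already-established variance bound for $P_t^n$ acting on a function orthogonal to $\mathcal{T}_t$. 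The factor $\sqrt{n}$ in the denominator of $c_{\mathrm{bias},2}$ is precisely the square root of the $\tfrac{k_t}{n}$ variance decay, and tracking it correctly through the Cauchy--Schwarz step is the one place where care is required.
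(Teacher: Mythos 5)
Your proposal is correct and follows essentially the same route as the paper's proof: bound $\norm{P_t^n g}$ by $(1-\delta)^{-1}$ times the quasi-projection norm, pay for the conditioning with $p_{\mathcal{S}_\delta}^{-1}$ via nonnegativity, and for the bias use the pointwise identity $P_t^n P_t g = P_t g$ on $\mathcal{S}_\delta$ together with $((I-P_t)g, P_t^n g)=0$, Cauchy--Schwarz, Jensen, and the variance bound applied to $(I-P_t)g$. The intermediate detour via $\doublehat\eta - \hat\eta = \hat G^{-1}(I-\hat G)\hat\eta$ in your bias paragraph is superfluous, since the argument you settle on in the final paragraph is the one that delivers the stated constants.
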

\begin{proof}
    Conditioned on this event, the empirical Gramian is invertible and
    \begin{equation}
    \label{eq:Pn_b_estimate}
        \norm{P_t^n g}
        = \norm{\hat{G}^{-1} \hat\zeta}_{2}
        \le \norm{\hat{G}^{-1}}_2\norm{{\hat\zeta}}_{2}
        \le \frac{1}{1-\delta}\norm{\hat\zeta}_{2} .
    \end{equation}
    Since the random variable $\norm{\hat\zeta}_2^2$ is non-negative and 
    $\mbb{P}\bracs{\norm{I - \hat{G}}\le \delta} \geq p_{\mcal{S}_\delta}$ by assumption, it holds that
    \begin{equation}
        \label{eq:b_cond_ub}
        \mbb{E}\bracs*{\norm{\hat\zeta}_2^2\,\big|\, \mcal{F}_t,\mcal{S}_\delta}
        \le \frac{\mbb{E}\bracs*{\norm{\hat\zeta}_2^2\,\big|\,\mcal{F}_t}}{p_{\mcal{S}_\delta}}
        = \frac{1}{p_{\mcal{S}_\delta}}\sum_{k=1}^{d_t} \mbb{E}\bracs*{(g, b_k)_n^2\,\big|\,\mcal{F}_t} .
    \end{equation}
    The last term is the norm of the quasi-projection and is bounded by Lemma~\ref{lem:quasi-projection_norm}.
    This leads to the variance bound
    \begin{equation}
       \mbb{E}\bracs*{\norm{P_t^n g}^2\,\big|\,\mcal{F}_t,\mcal{S}_\delta}
       \le \frac{1}{\pars{1-\delta}^2p_{\mcal{S}_\delta}} \pars*{\pars{1-\tfrac{1}{n}} \norm{P_t g}^2 + \tfrac{k_t}{n}\norm{g}^2} .
    \end{equation}
    To bound the bias terms, we start by applying Jensen's inequality to the variance term above to obtain
    \begin{align}
       \mbb{E}\bracs*{\norm{P_t^n g}\,\big|\,\mcal{F}_t,\mcal{S}_\delta}
       \le \mbb{E}\bracs*{\norm{P_t^n g}^2\,\big|\,\mcal{F}_t,\mcal{S}_\delta}^{1/2}
       \le \frac{1}{\pars{1-\delta}\sqrt{p_{\mcal{S}_\delta}}} \pars*{\pars{1-\tfrac{1}{n}} \norm{P_t g}^2 + \tfrac{k_t}{n}\norm{g}^2}^{1/2} .
    \end{align}
    Now let $g^\perp := \pars{I-P_t}g$ and observe that
    \begin{equation}
        \pars{g, P_t^n g}
        = \pars{g, P_t g} + \pars{g, \pars{P_t^n - P_t}g}
        = \norm{P_t g}^2 + \pars{g, P_t^n g^\perp} ,
    \end{equation}
    where the inner product can be bounded via Cauchy--Schwarz inequality by
    \begin{equation}
        \abs{\pars{g, P_t^n g^\perp}}
        = \abs{\pars{P_t g, P_t^n g^\perp}}
        \le \norm{P_t g}\norm{P_t^n g^\perp} .
    \end{equation}
    Combining the last three equations yields
    \begin{align}
    \label{eq:Lsmooth_empirical_linear_term}
        \mbb{E}\bracs*{\pars{g, P_t^n g}\,\big|\, \mcal{F}_t,\mcal{S}_\delta}
        &\ge \norm{P_t g}^2 - \tfrac{\sqrt{k_t}}{(1-\delta)\sqrt{p_{\mcal{S}_\delta}n}} \norm{P_t g}\norm{\pars{I - P_t}g} .
    \end{align}
\end{proof}

\section{Least squares projection with determinantal point processes and volume sampling}
\label{sec:dpp-projection}

As before, let $\braces{b_k}_{k=1}^{d_t}$ be an $L^2(\rho)$-orthonormal basis of $\mcal{T}_t$ and denote by $b(x) = (b_j(x))_{j=1}^{d_t} \in \mathbb{R}^{d_t}$ the vector of all basis functions.
Let $\mu_t = w_t^{-1} \rho$ be the optimal measure for the i.i.d.\ setting, with $w_t(x)^{-1} = \frac{1}{d_t} \Vert b(x) \Vert_2^2$.
Given a set of points $\boldsymbol{x} := (x_1, \ldots, x_n) \in\mcal{X}^n$, we consider the weighted least-squares projection associated with the empirical inner product 
$$
    (u,v)_n = \frac{1}{n}\sum_{i=1}^n w_t(x_i) u(x_i) v(x_i) .
$$
The points $\boldsymbol{x} := (x_1, \ldots, x_n)$ are drawn from the volume-rescaled distribution $\gamma_n^{\mu_t}$ defined by 
$$
    \dx[\gamma_n^{\mu_t}(\boldsymbol{x})] = \frac{n^{d_t}(n-d_t)!}{n!} \det(\hat G(\boldsymbol{x})) \dx[\mu_t^{\otimes n}(\boldsymbol{x})] ,
$$
where $\hat G(\boldsymbol{x})$ is the empirical Gramian matrix $\hat G(\boldsymbol{x})_{k,l} = (b_k,b_l)_n$ and $\mu_t^{\otimes n}$ denotes the product measure on $\mathcal{X}^n$.
This distribution tends to favour a high likelihood with respect to the optimal sampling measure $\mu_t^{\otimes n}$ in the i.i.d.\ setting and a high value of the determinant of the empirical Gramian matrix.
For $n=d_t$, $\gamma_{d_t}^{\mu_t}$ corresponds to a projection determinantal point process (DPP) with distribution  
$$
    \dx[\gamma_{d_t}(\boldsymbol{x})] = \frac{1}{d_t!} \det(B(\boldsymbol{x})^T B(\boldsymbol{x})) \dx[\rho^{\otimes d_t}(\boldsymbol{x})]
$$
with $B(\boldsymbol{x})_{i,j} = b_j(x_i)$.
Whenever two selected features $b(x_i)$ and $b(x_k)$ are equal for $i \neq k$, the matrix $B(\boldsymbol{x})$ becomes singular and the density 
of $\gamma_{d_t}$ vanishes.
This introduces a repulsion between the points.
For $n>d_t$, a sample from $\gamma_{d_t}^{\mu_t}$ is composed (up to a random permutation) by $d_t$ points $x_1, \ldots, x_{d_t}$ drawn from the DPP distribution $\gamma_{d_t}$, and $n-d_t$ i.i.d.\ points $x_{d_t+1}, \ldots x_n$ drawn from the measure $\mu_t$ (see \cite[Theorem 2.7]{derezinski2022unbiased} or \cite[Theorem 4.7]{nouy2023dpp}).

Given a sample from $\gamma^{\mu_t}_n$, we have the remarkable result that the least-squares projection $P_t^n$ is an unbiased estimate of $P_t$
(see~\cite[Theorem 3.1]{derezinski2022unbiased} and~\cite[Theorem 4.13]{nouy2023dpp})
and that we have a control on the variance~\cite[Theorem~4.14]{nouy2023dpp}.
We recall these results in the following Lemma.    
\begin{lemma}
\label{lemma:DPP_bias}
    Let $\delta\in(0,1)$ and $\zeta > 0 $ and suppose that $n \ge \max\braces{2d_t + 2, 4 d_t \delta^{-2}\log(\zeta^{-1} d_t^2 n)}$ sample points $x_1, \ldots, x_n$ are drawn from $\gamma^{\mu_t}_n$ given $\mcal{F}_t$. 
    Then the weighted least-squares projection $P_t^n$ satisfies assumption~\eqref{eq:bbv} with constants 
    $$
        c_{\mathrm{bias},1} = 1,
        \qquad
        c_{\mathrm{bias},2} = 0 ,
        \qquad
        c_{\mathrm{var},1} = c_{\mathrm{var},2} + 4 (1-\delta)^{-2} 
        \qquad\text{and}\qquad
        c_{\mathrm{var},2} =  4\frac{d_t}{n}(1-\delta)^{-2} + \zeta .
    $$
    For the choice $\zeta = \tfrac{d_t}n$, we obtain $c_{\mathrm{var},2} = \frac{d_t}{n}(1+ 4(1-\delta)^{-2})$.
\end{lemma}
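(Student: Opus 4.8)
The plan is to reduce the volume‑sampling result to the already‑established quasi‑projection bounds from Lemma~\ref{lem:quasi-projection_norm} (and the structural decomposition of $\gamma_n^{\mu_t}$ into a DPP part plus i.i.d.\ part), together with the quasi‑optimality inequality~\eqref{eq:Ptn-quasi-opt} for least squares projections and the matrix Chernoff concentration bound for the empirical Gramian. The bias constants $c_{\mathrm{bias},1}=1$, $c_{\mathrm{bias},2}=0$ are the content of the cited unbiasedness theorems (\cite[Theorem~3.1]{derezinski2022unbiased}, \cite[Theorem~4.13]{nouy2023dpp}): under volume‑rescaled sampling $\mathbb{E}[P_t^n g\mid\mathcal{F}_t]=P_t g$, hence $\mathbb{E}[(g,P_t^ng)\mid\mathcal{F}_t]=(g,P_tg)=\|P_tg\|^2$, which gives exactly the first line of~\eqref{eq:bbv} with $c_{\mathrm{bias},1}=1$ and $c_{\mathrm{bias},2}=0$. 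So all the real work is in the variance line.

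First I would split the expected squared norm using $g = P_tg + (I-P_t)g$ and the fact that, by unbiasedness and $P_t^nP_t=P_t$ on the range (the projection reproduces $\mathcal{T}_t$), $P_t^ng = P_tg + P_t^n(I-P_t)g$; this isolates the genuinely stochastic part $P_t^n g^\perp$ with $g^\perp:=(I-P_t)g$. Then I would invoke the variance control from \cite[Theorem~4.14]{nouy2023dpp}, which under the stated sample‑size condition $n\ge\max\{2d_t+2,\,4d_t\delta^{-2}\log(\zeta^{-1}d_t^2 n)\}$ gives a bound of the form $\mathbb{E}[\|P_t^n g^\perp\|^2\mid\mathcal{F}_t]\le 4\tfrac{d_t}{n}(1-\delta)^{-2}\|g^\perp\|^2+\zeta\|g^\perp\|^2$ — this is precisely the claimed $c_{\mathrm{var},2}=4\tfrac{d_t}{n}(1-\delta)^{-2}+\zeta$ multiplying $\|(I-P_t)g\|^2$. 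The $\|P_tg\|^2$ coefficient $c_{\mathrm{var},1}$ then comes from combining this with the trivial term $\|P_tg\|^2$ plus a cross‑term estimate; using $\mathbb{E}[\|P_t^ng\|^2\mid\mathcal{F}_t]=\|P_tg\|^2+\mathbb{E}[\|P_t^ng^\perp\|^2\mid\mathcal{F}_t]$ (cross term vanishes by unbiasedness and orthogonality) and then, on the complementary low‑probability event where concentration may fail, bounding $\|P_t^ng\|^2\le(1-\delta)^{-2}\|\hat\zeta\|_2^2$ via~\eqref{eq:Pn_b_estimate}‑type reasoning and Lemma~\ref{lem:quasi-projection_norm}, one picks up the extra $4(1-\delta)^{-2}\|P_tg\|^2$, yielding $c_{\mathrm{var},1}=c_{\mathrm{var},2}+4(1-\delta)^{-2}$. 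Finally, setting $\zeta=\tfrac{d_t}{n}$ gives $c_{\mathrm{var},2}=\tfrac{d_t}{n}(1+4(1-\delta)^{-2})$, and the sample‑size hypothesis becomes $n\gtrsim d_t\delta^{-2}\log(d_t n)$.

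The main obstacle I anticipate is bookkeeping the dependence on the low‑probability ``bad'' event where $\|I-\hat G\|>\delta$: unlike the plain least squares projection of Section~\ref{sec:least-squares_projection}, here we do not condition the samples on $\mathcal{S}_\delta$ (that would destroy unbiasedness), so the variance bound must absorb the contribution of this event through the matrix Chernoff tail $\mathbb{P}(\mathcal{S}_\delta^{\mathsf c}\mid\mathcal{F}_t)\le 2d_t\exp(-\delta^2 n/(2d_t))\le\zeta$, which is exactly what the sample‑size condition buys. One has to check that on $\mathcal{S}_\delta^{\mathsf c}$ the pseudo‑inverse Gramian $\hat G^+$ appearing in the least squares formula is still controlled well enough — in expectation against the volume‑rescaled density — that the resulting term is $O(\zeta\|g^\perp\|^2)$ rather than unbounded. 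This is precisely the delicate estimate carried out in \cite[Theorem~4.14]{nouy2023dpp}, which I would cite rather than reprove; the remaining steps are the elementary Pythagorean decomposition and the assembly of constants sketched above.
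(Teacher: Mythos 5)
Your proposal is correct and takes essentially the same route as the paper: the paper gives no self-contained argument for this lemma but simply recalls the unbiasedness of the volume-rescaled least-squares projection from \cite[Theorem~3.1]{derezinski2022unbiased} and \cite[Theorem~4.13]{nouy2023dpp} (giving $c_{\mathrm{bias},1}=1$, $c_{\mathrm{bias},2}=0$) and the variance control from \cite[Theorem~4.14]{nouy2023dpp} (giving $c_{\mathrm{var},1}$ and $c_{\mathrm{var},2}$), which is exactly where your sketch ends up. Your surrounding narrative about how the $4(1-\delta)^{-2}$ term in $c_{\mathrm{var},1}$ arises is somewhat speculative and not needed, since you (like the paper) ultimately defer that estimate to the cited theorem rather than reproving it.
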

Note that, in contrast to the least squares projection in Section~\ref{sec:least-squares_projection}, the bias and variance bounds in Lemma~\ref{lemma:DPP_bias} hold without the need to condition on the stability event~\eqref{eq:stability}.

\section{Debiased projection}
\label{app:debiased_projection}

Let $\tilde{P}_t : \mcal{H}\to\mcal{T}_t$ be an arbitrary projection operator, such as an empirical or an oblique projection.
Given an unbiased estimate $Q_t : \mcal{H}\to\mcal{T}_t$ of the orthogonal projection $P_t$, such as a quasi-projection, we can define the debiased version of $\tilde{P}_t$ as
\begin{equation}
\label{eq:debiased_projector}
	\bar{P}_t := \tilde{P}_t + Q_t(I-\tilde{P}_t) .
\end{equation}
Denoting by $\mbb{E}_{Q_t}$ the expectation with respect to $Q_t$, it holds that
$$
	\mbb{E}_{Q_t}\bracs*{\tilde{P}_tv + Q_t(I-\tilde{P}_t)v}
    = \tilde{P}_tv + P_t(I-\tilde{P}_t)v
    = P_tv ,
$$
justifying the name ``debiased projection''.

The motivation for this definition is that the variance of estimating the projection of $(I-\tilde{P}_t)g$ is smaller than that of estimating the projection of $g$ (similar to multi-level Monte Carlo).
An intuitive application of this idea is to define $\tilde{P}_t$ as the least squares projector using all previously drawn sample points, while $Q_t$ is a quasi-projection that uses new and independent sample points.

Bounds for the bias and variance constants under the strong assumption that $\norm{\tilde{P}_t} < B$ are provided in the subsequent Lemma~\ref{lem:debiased_P_bounded}.
For the sake of analysis, we also make the simplifying assumption that $\tilde{P}_t$ is drawn independently in every step.
Then a uniform bound of the form $\norm{\tilde{P}_t} \le B$ \textbf{is not required} and the resulting bounds are provided in the subsequent Lemma~\ref{lemma:debiased_P}.

Note that using a quasi-projection for $Q_t$ yields $c_{\mathrm{var},1}(Q_t)-1 = \tfrac{k_t-1}{n} \le \tfrac{k_t}{n} = c_{\mathrm{var},2}(Q_t)$ and therefore $c_{\mathrm{var},2} \le \pars{c_{\mathrm{var},2}(\tilde{P}_t) + 1} \tfrac{k_t}{n}$.
This means that the $c_{\mathrm{var},2}$-term can still be made as small as desired.

\begin{lemma}
\label{lem:debiased_P_bounded}
    Let $Q_t : \mcal{H}\to\mcal{T}_t$ be an unbiased estimate of $P_t$ and $\tilde{P}_t : \mcal{H}\to\mcal{T}_t$ be an $\mcal{F}_t$-measurable projection with uniformly bounded norm $\norm{\tilde{P}_t} < B$.
    Suppose that $Q_t$ satisfies the variance bound
    \begin{align}
    	\mbb{E}_{Q_t}\bracs*{\norm{Q_tw}^2}
    	\le c_{\mathrm{var},1}(Q_t) \norm{Pw}^2 + c_{\mathrm{var},2}(Q_t) \norm{(I-P)w}^2 .
    \end{align}
    Then, the debiased projection $\bar{P}_t := \tilde{P}_t + Q_t(I-\tilde{P}_t)$ satisfies assumption~\eqref{eq:bbv} with constants 
    $$
        c_{\mathrm{bias},1} = 1,
        \qquad
        c_{\mathrm{bias},2} = 0 ,
        \qquad
        c_{\mathrm{var},1} = 1
        \qquad\text{and}\qquad
        c_{\mathrm{var},2} = (c_{\mathrm{var},1}(Q_t)-1)B^2 + c_{\mathrm{var},2}(Q_t). 
    $$
\end{lemma}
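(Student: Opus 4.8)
The plan is to establish the four constants in assumption~\eqref{eq:bbv} for the debiased projection $\bar P_t = \tilde P_t + Q_t(I-\tilde P_t)$ by exploiting the fact that $\bar P_t$ is, by construction, an unbiased estimator of $P_t$ and by reducing the variance computation to the variance bound assumed for $Q_t$ applied to the vector $w := (I-\tilde P_t)g$. First I would record that, since $\tilde P_t$ is $\mcal F_t$-measurable and $Q_t$ is an unbiased estimator of $P_t$ conditionally on $\mcal F_t$, we have $\mbb E[\bar P_t g \mid \mcal F_t] = \tilde P_t g + P_t(I-\tilde P_t)g = P_t g$ exactly as in the displayed justification for the name. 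Consequently $\mbb E[(g,\bar P_t g)\mid\mcal F_t] = (g,P_t g) = \norm{P_t g}^2$, which gives $c_{\mathrm{bias},1}=1$ and $c_{\mathrm{bias},2}=0$ immediately (no Cauchy–Schwarz term is needed).

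For the variance, the key observation is that $\bar P_t g$ lies in $\mcal T_t$, so $\bar P_t g = P_t\bar P_t g$, and one can split $\bar P_t g = \tilde P_t g + Q_t w$ with $w = (I-\tilde P_t)g$. I would then write $\norm{\bar P_t g}^2$ and take the conditional expectation; since $\mbb E[Q_t w\mid\mcal F_t] = P_t w$, the cross term behaves nicely and the whole expression can be organized as $\norm{\tilde P_t g + P_t w}^2$ plus the conditional variance $\mbb E[\norm{Q_t w - P_t w}^2\mid\mcal F_t]$. The deterministic part equals $\norm{P_t g}^2$ (since $\tilde P_t g + P_t w = P_t g$), while the variance part is bounded, using the assumed variance bound for $Q_t$, by $(c_{\mathrm{var},1}(Q_t)-1)\norm{P_t w}^2 + c_{\mathrm{var},2}(Q_t)\norm{(I-P_t)w}^2$ — here one uses that $\mbb E[\norm{Q_t w}^2] - \norm{P_t w}^2$ is exactly the variance, and the hypothesis gives $\mbb E[\norm{Q_t w}^2]\le c_{\mathrm{var},1}(Q_t)\norm{P_t w}^2 + c_{\mathrm{var},2}(Q_t)\norm{(I-P_t)w}^2$.

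It then remains to control $\norm{P_t w}$ and $\norm{(I-P_t)w}$ by $\norm{P_t g}$ and $\norm{(I-P_t)g}$. Since $w = (I-\tilde P_t)g$ and $\tilde P_t$ maps into $\mcal T_t$, I would note $\norm{w}\le (1+B)\norm{g}$ from $\norm{\tilde P_t}<B$; but more usefully, $\norm{P_t w}\le\norm{w}$ and I would argue that the component of $w$ inside $\mcal T_t$ is $P_t w = P_t g - \tilde P_t g + \tilde P_t g - P_t\tilde P_t g$, which after careful bookkeeping is controlled by $B\norm{(I-P_t)g}$ (the point being that $\tilde P_t$ only moves mass of size comparable to the orthogonal-complement part, amplified by $B$), while $\norm{(I-P_t)w} = \norm{(I-P_t)g}$ since $(I-P_t)\tilde P_t = 0$. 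Plugging these into the variance estimate yields $\mbb E[\norm{\bar P_t g}^2\mid\mcal F_t]\le\norm{P_t g}^2 + \big((c_{\mathrm{var},1}(Q_t)-1)B^2 + c_{\mathrm{var},2}(Q_t)\big)\norm{(I-P_t)g}^2$, giving $c_{\mathrm{var},1}=1$ and the claimed $c_{\mathrm{var},2}$. The main obstacle is the careful bookkeeping in the third step: getting the exact constant $B^2$ (rather than $(1+B)^2$ or similar) on the $(c_{\mathrm{var},1}(Q_t)-1)$ term requires identifying precisely which part of $w$ is being projected by $Q_t$ into $\mcal T_t$, and realizing that the deterministic cancellation $\tilde P_t g + P_t w = P_t g$ removes the naively-expected contribution of $\norm{P_t g}$ to the variance term.
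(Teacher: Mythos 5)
Your proposal is correct and follows essentially the same route as the paper: expand $\norm{\bar P_t g}^2 = \norm{\tilde P_t g + Q_t(I-\tilde P_t)g}^2$, use unbiasedness of $Q_t$ for the cross term, apply the assumed variance bound to $w=(I-\tilde P_t)g$, and reduce via $(I-P_t)w=(I-P_t)g$ and $\norm{P_t w}\le B\norm{(I-P_t)g}$. The one piece of ``bookkeeping'' you defer is exactly the identity $P_t(I-\tilde P_t)=-\tilde P_t(I-P_t)$, which follows from $P_t\tilde P_t=\tilde P_t$ (range of $\tilde P_t$ is $\mcal T_t$) and $\tilde P_t P_t=P_t$ ($\tilde P_t$ acts as the identity on $\mcal T_t$), and is precisely how the paper obtains the constant $B^2$.
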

\begin{proof}
    We start by observing that
    \begin{align}
    	\mbb{E}_{Q_t}\bracs*{\norm{\bar{P}_tv}^2}
    	&= \mbb{E}_{Q_t}\bracs*{\norm{\tilde{P}_tv + {Q_t}(I-\tilde{P}_t)v}^2} \\
    	&= \norm{\tilde{P}_tv}^2
    	+ 2\mbb{E}_{Q_t}\bracs*{\pars{\tilde{P}_tv,  {Q_t}(I-\tilde{P}_t)v}}
    	+ \mbb{E}_{Q_t}\bracs*{\norm{{Q_t}(I-\tilde{P}_t)v}^2} \\
    	&= \norm{\tilde{P}_tv}^2
    	+ 2\pars{\tilde{P}_tv, (P_t-\tilde{P}_t)v}
    	+ \mbb{E}_{Q_t}\bracs*{\norm{{Q_t}(I-\tilde{P}_t)v}^2} \\
    	&\le
        \begin{multlined}[t]
            \norm{\tilde{P}_tv}^2
        	+ 2\pars{\tilde{P}_tv, (P_t-\tilde{P}_t)v}
        	+ c_{\mathrm{var},1}({Q_t})\norm{P_t(I-\tilde{P}_t)v}^2 \\
        	+ c_{\mathrm{var},2}({Q_t})\norm{(I-P_t)(I-\tilde{P}_t)v}^2
        \end{multlined} \\
    	&=
        \begin{multlined}[t]
            \norm{\tilde{P}_tv}^2
        	+ 2\pars{\tilde{P}_tv, (P_t-\tilde{P}_t)v}
        	+ c_{\mathrm{var},1}({Q_t})\norm{(P_t-\tilde{P}_t)v}^2 \\
        	+ c_{\mathrm{var},2}({Q_t})\norm{(I-P_t)v}^2
        \end{multlined} \\
    	&=
        \begin{multlined}[t]
            \norm{\tilde{P}_tv + (P_t - \tilde{P}_t)v}^2
        	+ (c_{\mathrm{var},1}({Q_t})-1)\norm{(P_t-\tilde{P}_t)v}^2 \\
        	+ c_{\mathrm{var},2}({Q_t})\norm{(I-P_t)v}^2
        \end{multlined} \\
    	&= \norm{P_tv}^2
    	+ (c_{\mathrm{var},1}({Q_t})-1)\norm{\tilde{P}_t(I-P_t)v}^2
    	+ c_{\mathrm{var},2}({Q_t})\norm{(I-P_t)v}^2
    	.
    \end{align}
    Using the uniform bound $\norm{\tilde{P}_t}\le B$ now yields the result
    \begin{align}
    	\mbb{E}_{Q_t}\bracs*{\norm{\bar{P}_tv}^2}
    	&\le \norm{P_tv}^2
    	+ \pars*{(c_{\mathrm{var},1}(Q_t)-1)\norm{\tilde{P}_t}^2 + c_{\mathrm{var},2}(Q_t)} \norm{(I-P_t)v}^2 .
    \end{align}
\end{proof}

\begin{lemma}
\label{lemma:debiased_P}
    Let $Q_t : \mcal{H}\to\mcal{T}_t$ be an unbiased estimate of $P_t$ and $\tilde{P}_t : \mcal{H}\to\mcal{T}_t$ be a projection.
    Suppose that $Q_t$ and $\tilde{P}_t$ satisfy the variance bounds
    \begin{align}
    	\mbb{E}_{Q_t}\bracs*{\norm{{Q_t}w}^2}
    	&\le c_{\mathrm{var},1}({Q_t}) \norm{P_tw}^2 + c_{\mathrm{var},2}({Q_t}) \norm{(I-P_t)w}^2 \\
    	\mbb{E}_{\tilde{P}_t}\bracs*{\norm{\tilde{P}_tw}^2}
    	&\le c_{\mathrm{var},1}(\tilde{P}_t) \norm{P_tw}^2 + c_{\mathrm{var},2}(\tilde{P}_t) \norm{(I-P_t)w}^2
    	.
    \end{align}
    Then, the debiased projection $\bar{P}_t := \tilde{P}_t + Q_t(I-\tilde{P}_t)$ satisfies assumption~\eqref{eq:bbv} with constants 
    $$
        c_{\mathrm{bias},1} = 1,
        \quad
        c_{\mathrm{bias},2} = 0 ,
        \quad
        c_{\mathrm{var},1} = 1
        \quad\text{and}\quad
        c_{\mathrm{var},2} = \pars*{(c_{\mathrm{var},1}(Q_t)-1) c_{\mathrm{var},2}(\tilde{P}_t) + c_{\mathrm{var},2}(Q_t)}. 
    $$
\end{lemma}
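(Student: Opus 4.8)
The plan is to reuse the computation from the proof of Lemma~\ref{lem:debiased_P_bounded} up to its penultimate display, and then, instead of invoking the uniform bound $\norm{\tilde{P}_t}\le B$, to average over the (now independent) randomness of $\tilde{P}_t$.

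First I would condition on $\tilde{P}_t$ and on $\mcal{F}_t$ and take the expectation only over $Q_t$. Expanding $\bar{P}_tv = \tilde{P}_tv + Q_t(I-\tilde{P}_t)v$, using the unbiasedness $\mbb{E}_{Q_t}[Q_tw\mid\mcal{F}_t]=P_tw$ for the cross term, the variance bound on $Q_t$ for the last term, and the fact that $\tilde{P}_t$ is a projection with range $\mcal{T}_t$ (so $\tilde{P}_ty=y$ for $y\in\mcal{T}_t$, hence $P_t\tilde{P}_t=\tilde{P}_t$ and $(I-P_t)\tilde{P}_t=0$), the same algebra as in Lemma~\ref{lem:debiased_P_bounded} collapses the quadratic expression into
\begin{align}
    \mbb{E}_{Q_t}\bracs*{\norm{\bar{P}_tv}^2 \,\big|\, \mcal{F}_t,\tilde{P}_t}
    &\le \norm{P_tv}^2
    + (c_{\mathrm{var},1}(Q_t)-1)\norm{\tilde{P}_t(I-P_t)v}^2
    + c_{\mathrm{var},2}(Q_t)\norm{(I-P_t)v}^2 .
\end{align}
By Jensen's inequality applied to $w\in\mcal{T}_t$ we have $c_{\mathrm{var},1}(Q_t)\ge 1$, so the coefficient of $\norm{\tilde{P}_t(I-P_t)v}^2$ is nonnegative and may safely be bounded from above.

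Next I would take the expectation over $\tilde{P}_t$, which by the assumed independence of $\tilde{P}_t$ and $Q_t$ given $\mcal{F}_t$ equals the full conditional expectation $\mbb{E}[\,\cdot\mid\mcal{F}_t]$. Applying the variance bound for $\tilde{P}_t$ to the vector $w:=(I-P_t)v$, for which $P_tw=0$ and $(I-P_t)w=(I-P_t)v$, gives $\mbb{E}_{\tilde{P}_t}[\norm{\tilde{P}_t(I-P_t)v}^2\mid\mcal{F}_t]\le c_{\mathrm{var},2}(\tilde{P}_t)\norm{(I-P_t)v}^2$. Substituting this into the display above yields the variance constants $c_{\mathrm{var},1}=1$ and $c_{\mathrm{var},2}=(c_{\mathrm{var},1}(Q_t)-1)c_{\mathrm{var},2}(\tilde{P}_t)+c_{\mathrm{var},2}(Q_t)$. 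For the bias I would iterate the two expectations in the same way: $\mbb{E}[\bar{P}_tg\mid\mcal{F}_t]=\mbb{E}_{\tilde{P}_t}[\tilde{P}_tg+P_t(I-\tilde{P}_t)g\mid\mcal{F}_t]=P_tg$, using $P_t\tilde{P}_t=\tilde{P}_t$, so $\mbb{E}[(g,\bar{P}_tg)\mid\mcal{F}_t]=(g,P_tg)=\norm{P_tg}^2$, giving $c_{\mathrm{bias},1}=1$ and $c_{\mathrm{bias},2}=0$.

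The only genuine point of care is the bookkeeping of the two independent sources of randomness $Q_t$ and $\tilde{P}_t$ and the repeated use of the projection identities $P_t\tilde{P}_t=\tilde{P}_t$ and $(I-P_t)\tilde{P}_t=0$; once the conditioning is organised, the computation is identical to that of Lemma~\ref{lem:debiased_P_bounded}, with the single step $\norm{\tilde{P}_t(I-P_t)v}^2\le B^2\norm{(I-P_t)v}^2$ replaced by its averaged counterpart. I do not expect any substantial obstacle beyond making the independence assumption precise.
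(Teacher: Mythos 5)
Your proposal is correct and follows essentially the same route as the paper: reuse the intermediate bound $\mbb{E}_{Q_t}\bracs*{\norm{\bar{P}_tv}^2}\le \norm{P_tv}^2 + (c_{\mathrm{var},1}(Q_t)-1)\norm{\tilde{P}_t(I-P_t)v}^2 + c_{\mathrm{var},2}(Q_t)\norm{(I-P_t)v}^2$ from Lemma~\ref{lem:debiased_P_bounded}, then apply the tower property and the variance bound of $\tilde{P}_t$ to the vector $(I-P_t)v$. Your explicit remark that $c_{\mathrm{var},1}(Q_t)\ge 1$ (so the coefficient being averaged is nonnegative and the upper bound is legitimate) is a detail the paper leaves implicit, but it does not change the argument.
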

\begin{proof}
    Recalling the tower property
    $$
        \mbb{E}\bracs*{\norm{\bar{P}_tv}^2}
        = \mbb{E}_{\tilde P_t}\bracs*{\mbb{E}_{Q_t}\bracs*{\norm{\bar{P}_tv}^2}},
    $$
    we can start the proof with the initial estimate
    $$
    	\mbb{E}_{Q_t}\bracs*{\norm{\bar{P}_tv}^2}
    	\le \norm{P_tv}^2
    	+ (c_{\mathrm{var},1}({Q_t})-1)\norm{\tilde{P}_t(I-P_t)v}^2
    	+ c_{\mathrm{var},2}({Q_t})\norm{(I-P_t)v}^2
    $$
    from the proof of Lemma~\ref{lem:debiased_P_bounded}.
    Continuing, we estimate
    \begin{align}
    	\mbb{E}_{\tilde P_t}\bracs*{\norm{\bar{P}_tv}^2}
    	&= \norm{P_tv}^2
    	+ (c_{\mathrm{var},1}(Q_t)-1) \mbb{E}_{\tilde P_t}\bracs*{\norm{\tilde{P}_t(I-P_t)v}^2}
    	+ c_{\mathrm{var},2}(Q_t)\norm{(I-P_t)v}^2 \\
    	&\le \norm{P_tv}^2
    	+ \pars*{(c_{\mathrm{var},1}(Q_t)-1) c_{\mathrm{var},2}(\tilde{P}_t) + c_{\mathrm{var},2}(Q_t)} \norm{(I-P_t)v}^2 .
    \end{align}
\end{proof}

\section{The strong Polyak--\L{}ojasiewicz condition}
\label{sec:A3}

\begin{definition}
    A manifold $\mcal{M}$  is called geodesically convex if for every two points $v, w \in \mcal{M}$ there exists a unique geodesic that connects $v$ and $w$. 
\end{definition}

\begin{definition}
    Let $\mcal{M}$ be geodesically convex.
    A parameterisation of this geodesic $\gamma_{vw}: [0, T_{vw}] \to \mcal{M}$ is said to have unit speed if at every point $t\in(0,T_{vw})$ the derivative $\dot\gamma(t) \in \mathbb{T}_{\gamma(t)}\mcal{M}$ satisfies
    $$
        \|\dot\gamma(t)\|_{\mcal{H}} = 1 ,
    $$
    where we identify the bounded linear map $d_t\gamma\in \mcal{L}(\mbb{R}, \mathbb{T}_{\gamma(t)}\mcal{M})$ with $\dot\gamma(t)\in\mathbb{T}_{\gamma(t)}\mcal{M}$.
\end{definition}

\begin{definition}
    Let $\mcal{M}$ be geodesically convex and $\mcal{L}:\mcal{M}\to\mbb{R}$.
    $\mcal{L}$ is called $\lambda$-strongly geodesically convex on $\mcal{M}$, if for every two points $v,w\in\mcal{M}$ and corresponding unit-speed parameterised geodesic $\gamma_{vw}$, the function $\mcal{L}\circ\gamma_{vw} : [0, T_{vw}] \to \mbb{R}$ is $\lambda$-strongly convex in the conventional sense.
\end{definition}

\revision[0]{A sufficient condition for~\eqref{eq:mu-PL_strong} is provided in the following Lemma~\ref{A3:Lemma_SPL}, which generalises the relation between~\eqref{eq:mu-PL} and strong convexity to the case of manifolds.
It relies on the geodesic convexity of $\mcal{M}$, which generalises the concept of convexity to Riemannian manifolds, and a related strong convexity assumption on $\mcal{L}$.
In contrast to classical convexity, the assumptions of Lemma~\ref{A3:Lemma_SPL} are not easy to verify in the non-Euclidean setting and often only give local convergence guarantees around non-degenerate stationary points.}

\begin{lemma}
\label{A3:Lemma_SPL}
    Suppose that $\mcal{M} \subseteq \mcal{H}$ is a geodesically convex Riemannian submanifold of $\mcal{H}$ without boundary and let $T_{t} = \mbb{T}_{u_t}\mcal{M}$ be the tangent space of $\mcal{M}$ at $u_t$. Furthermore assume that $\mcal{L}_{\mathrm{min},\mcal{M}}\in\mathbb{R}$ is attained in $\mathcal{M}$.
    If $\mcal{L} : \mcal{M}\to\mbb{R}$ is $\lambda$-strongly geodesically convex on $\mcal{M}$, then~\eqref{eq:mu-PL_strong} is satisfied.
\end{lemma}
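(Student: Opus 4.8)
The plan is to transport the classical implication ``strong convexity $\Rightarrow$ Polyak--\L{}ojasiewicz'' to the geodesic setting. First I would fix $u := u_t \in \mcal{M}$, write $P_t$ for the $\mcal{H}$-orthogonal projection onto $\mcal{T}_t = \mbb{T}_{u}\mcal{M}$, and pick a minimiser $u^* \in \mcal{M}$ of $\mcal{L}$ over $\mcal{M}$, which exists since $\mcal{L}_{\mathrm{min},\mcal{M}}$ is attained in $\mcal{M}$. The case $\mcal{L}(u) = \mcal{L}_{\mathrm{min},\mcal{M}}$ is trivial, so assume $u \neq u^*$. By geodesic convexity of $\mcal{M}$ there is a unique unit-speed geodesic $\gamma \colon [0, T] \to \mcal{M}$ with $\gamma(0) = u$, $\gamma(T) = u^*$, and $T > 0$ the geodesic distance between the two points.

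Next I would study the scalar function $h := \mcal{L} \circ \gamma \colon [0,T] \to \mbb{R}$. By the assumed $\PL$-strong geodesic convexity of $\mcal{L}$, $h$ is $\PL$-strongly convex in the ordinary sense, so it obeys the quadratic lower bound $h(T) \ge h(0) + h'(0)\,T + \tfrac{\PL}{2}T^2$. Here $h(0) = \mcal{L}(u)$ and $h(T) = \mcal{L}_{\mathrm{min},\mcal{M}}$, and—using that $\gamma$ is a smooth $\mcal{H}$-valued curve with $\dot\gamma(0) \in \mbb{T}_{u}\mcal{M} = \mcal{T}_t$, together with the Fréchet differentiability of $\mcal{L}$ from assumption~(i)—the chain rule gives $h'(0) = D\mcal{L}(u)[\dot\gamma(0)] = \pars{\nabla\mcal{L}(u), \dot\gamma(0)} = \pars{P_t\nabla\mcal{L}(u), \dot\gamma(0)}$, the last equality because $P_t$ is the orthogonal projection onto $\mcal{T}_t$ and $\dot\gamma(0)\in\mcal{T}_t$.

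Substituting into the strong-convexity bound, rearranging, completing the square in $T$ (using $-aT - \tfrac{\PL}{2}T^2 \le \tfrac{a^2}{2\PL}$ for every real $T$), and finally applying Cauchy--Schwarz with $\norm{\dot\gamma(0)} = 1$ yields
\begin{align*}
    \mcal{L}(u) - \mcal{L}_{\mathrm{min},\mcal{M}}
    &\le -\pars{P_t\nabla\mcal{L}(u), \dot\gamma(0)}\,T - \tfrac{\PL}{2}T^2 \\
    &\le \tfrac{1}{2\PL}\pars{P_t\nabla\mcal{L}(u), \dot\gamma(0)}^2
    \le \tfrac{1}{2\PL}\norm{P_t\nabla\mcal{L}(u)}^2 ,
\end{align*}
which is precisely~\eqref{eq:mu-PL_strong} after multiplying by $2\PL$.

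The main obstacle—indeed the only nontrivial ingredient—is the identification $h'(0) = \pars{P_t\nabla\mcal{L}(u), \dot\gamma(0)}$: it requires that $\mcal{M}$ carries the metric induced from $\mcal{H}$ (so that the Riemannian gradient of $\mcal{L}|_{\mcal{M}}$ at $u$ is $P_t\nabla\mcal{L}(u)$, cf.~\cite[Section~3.6]{Boumal2023-yt}), that the geodesic is differentiable as an $\mcal{H}$-valued curve, and that the chain rule for Fréchet derivatives applies along it—all guaranteed by the standing assumptions. Everything else is the one-dimensional computation above, plus the trivial case $u = u^*$, where both sides of~\eqref{eq:mu-PL_strong} vanish.
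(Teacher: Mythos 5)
Your proposal is correct and follows essentially the same route as the paper's proof: compose $\mcal{L}$ with the unit-speed geodesic to the minimiser, apply one-dimensional $\lambda$-strong convexity, complete the square, and identify the derivative of the composition with $\pars{P_t\nabla\mcal{L}(u), \dot\gamma(0)}$ before applying Cauchy--Schwarz. The only (cosmetic) difference is that you evaluate the strong-convexity inequality directly at $s=0$, $t=T$, whereas the paper minimises over $t$ at a general interior point $s$ and then lets $s\to 0$.
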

\begin{proof}
    To prove the assertion, let $u\in\mcal{M}$ be arbitrary and denote by $u_{\mathrm{min},\mcal{M}}$ the minimum of $\mcal{L}$ in $\mcal{M}$.
    Let $\gamma := \gamma_{uu_{\mathrm{min},\mcal{M}}}$ be the unit-speed parameterisation of the geodesic from $u$ to $u_{\mathrm{min},\mcal{M}}$ and define the function $g := \mcal{L}\circ\gamma : [0, T] \to \mbb{R}$ with $T := T_{uu_{\mathrm{min},\mcal{M}}}$.
    When $\mcal{L}$ is $\lambda$-strongly geodesically convex on $\mcal{M}$, then
    \begin{equation}
    \label{eq:g_strong_convexity}
        g(t) \ge g(s) + \dot{g}(s)(t-s) + \tfrac\lambda 2(t-s)^2
    \end{equation}
    for any $t\in[0,T]$ and $s\in(0, T)$.
    We now minimise both sides of this equation with respect to $t$.
    For the left-hand side, this immediately yields $g(T) = \mcal{L}_{\mathrm{\min},\mcal{M}}$.
    To minimise the right-hand side, we compute the first-order necessary optimality condition
    $$
        \dot{g}(s) + \lambda(t-s) = 0 .
    $$
    Solving this equation for $t$ yields $t = s - \tfrac{1}\lambda\dot{g}(s)$.
    Since minimisation preserves the inequality in equation~\eqref{eq:g_strong_convexity}, we can substitute the optimal values that we obtained for each side back into~\eqref{eq:g_strong_convexity}.
    This yields
    \begin{equation}
    \label{eq:g_strong_convexity_minimum}
        \mathcal{L}_{\mathrm{min},\mcal{M}}
        \ge g(s) - \tfrac{1}\lambda\dot{g}(s)^2 + \tfrac1{2\lambda} \dot{g}(s)^2
        = g(s) - \tfrac{1}{2\lambda}\dot{g}(s)^2 .
    \end{equation}
    Let $P_{\dot\gamma(s)}$ denote the $\mcal{H}$-orthogonal projection onto $\operatorname{span}\braces{\dot\gamma(s)}\subseteq \mbb{T}_{u}\mcal{M}$.
    Since $\gamma$ is a unit-speed parameterisation, the chain rule implies that
    $$
        \dot{g}(s)^2
        = \pars*{\mathcal{R} (d_{\gamma(s)}\mcal{L} \circ d_s\gamma)}^2
        = (\nabla\mcal{L}(\gamma(s)), \dot{\gamma}(s))_{\mcal{H}}^2
        = \|P_{\dot\gamma(s)} \nabla\mcal{L}(\gamma(s))\|_{\mcal{H}}^2
        \le \|P_{\mbb{T}_u\mcal{M}} \nabla\mcal{L}(\gamma(s))\|_{\mcal{H}}^2 ,
    $$
    where $\mathcal{R}(d_{\gamma(s)}\mcal{L} \circ d_s\gamma) \in \mbb{R}$ denotes the Riesz representative of the bounded linear map $d_{\gamma(s)}\mcal{L} \circ d_s\gamma\in\mcal{L}(\mbb{R}, \mbb{R})$.
    Substituting this bound into equation~\eqref{eq:g_strong_convexity_minimum} and taking the limit $s\to 0$ proves the claim.
\end{proof}

\section{Example of a non-trivial retraction}
\label{app:retraction}

\revision[0]{A trivial case in which assumption~\eqref{eq:C-retraction} is satisfied with $C_{\mathrm{R}} = 0$ and $\beta=0$ is when $\mcal{M}$ is a (potentially uncountable) union of linear spaces, and $\mcal{T}_t\subseteq\mcal{M}$ with $R_t g := g$ for all $v_t\in\mcal{M}$.
Three prominent examples of this setting are when
\mbox{(i)}  $\mcal{M}$ is a linear space, 
\mbox{(ii)}  $\mcal{M}$ is a set of sparse vectors, or 
\mbox{(iii)}  $\mcal{M}$ is formed by tensor networks (of arbitrary topologies).}

To understand assumption~\eqref{eq:C-retraction} \revision[0]{for non-trivial retractions}, first consider the \revision[0]{case $\beta=0$, i.e.}
\begin{equation}
    \mcal{L}\pars{R_u(u + g)} \le \mcal{L}\pars{u + g} + \frac{C_{\mathrm{R}}}{2}\norm{g}^2 .
\end{equation}
Although it is non-standard and not easy to guarantee a priori, this condition seems natural because it ensures that the error of the retraction is of higher order (i.e.\ $\mcal{O}(\norm{g}^2)$) than the progress of the linear step (which is of order $\mcal{O}(\norm{g})$).
Assumption~\eqref{eq:C-retraction} is a weaker form of the above condition, which allows for an additional additive perturbation $\beta$.

To provide an example for when assumption~\eqref{eq:C-retraction} is satisfied, we suppose that $\mcal{M}$ is a relatively compact (in $\mathcal{H}$) manifold with positive reach $R > 0$.%
\footnote{An explanation of the reach and its properties, as well as further references can be found in~\cite{trunschke2023convergence}.}
Since $\mcal{L}$ is $L$-smooth, we can assume that it is locally Lipschitz continuous, i.e.\ that for every radius $r$ there exists a constant $\mathrm{Lip}(r)$ such that for all $v,w\in\mcal{H}$ with $\norm{v-w}\le r$
\begin{equation}
    |\mcal{L}(v) - \mcal{L}(w)|
    \le \mathrm{Lip}(r) \norm{v - w} .
\end{equation}
Since $\mcal{M}$ is (relatively) compact, Weierstrass's theorem guarantees that a uniform upper bound $\overline{\mathrm{Lip}}$ for $\mathrm{Lip}_t$ can be found.
This means that $\mcal{L}$ is Lipschitz continuous on $\mcal{M}$ with Lipschitz constant $\overline{\mathrm{Lip}}$.
Now, when the step size is small enough, Proposition~14.2 in~\cite{trunschke2023convergence} ensures that we can find a retraction $R_v$ such that
$$
    \abs{
        \mcal{L}(R_v(v+sg)) - \mcal{L}(v+sg)
    }
    \le \overline{\mathrm{Lip}} \norm{R_v(v+sg) - (v+sg)}
    \le \frac{\overline{\mathrm{Lip}}}{R} s^2\norm{g}^2 .
$$

A natural way to define this retraction is the projection of $v+sg$ onto $\mcal{M}$ (which exists when $s$ is small enough).
For the model class of low-rank tensors of order $D\in\mbb{N}$, 
the exact projection is NP-hard to compute, but a quasi-optimal projection is provided by the HSVD~\cite{grasedyck2010} and satisfies $\norm{R^{\mathrm{HSVD}}_v(v+g) - (v+g)} \le \sqrt{D} \norm{R_v(v+g) - (v+g)}$.
This gives a computable retraction satisfying
$$
    \mcal{L}(R^{\mathrm{HSVD}}_v(v+g))
    \le \mcal{L}(v+g) + \frac{\overline{\mathrm{Lip}}\sqrt{D}}{2R} \norm{g}^2,
$$
where $R$ depends on the singular values of the tensor $v$. 
Since computing the retraction $R_v$ may be very costly, it is often interesting to approximate its exact value by stochastic or iterative methods with a controllable error.
This can be captured by the $\beta$ term in assumption \eqref{eq:C-retraction}, which is possible because most of our theorems do not require a deterministic sequence $(\beta_t)_{t\ge 1}$ but only $(\mbb{E}\bracs*{\beta_t \mid \mcal{F}_t})_{t\ge 1}\in\ell^1$.

\section{Bound for \texorpdfstring{$\boldsymbol\kappa$}{} in the setting of recovery}
\label{app:kappa_bound_reach}

The proof of this result relies on the subsequent basic proposition.
\begin{proposition}[\cite{Federer1959}] \label{prop:nhood_properties}
    Suppose that $\mcal{M}$ is a manifold and that $\mcal{T}_v$ is the tangent space of $\mcal{M}$ at $v\in\mcal{M}$.
    Assume moreover, that $R_v := \operatorname{rch}\pars{\mcal{M}, v} > 0$.
    Then, if $u\in\mcal{M}$ and $g = v - u$ it holds that
    $$
        \norm{g - P_vg} \le \frac{\norm{g}^2}{2R_v} .
    $$
\end{proposition}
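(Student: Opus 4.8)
The plan is to derive the estimate from the supporting-ball characterization of positive reach, which is the geometric core of Federer's theory. The key fact I would invoke is that $\operatorname{rch}(\mcal{M}, v) \ge R_v$ is equivalent to the statement that for every unit normal vector $\nu \in \mcal{T}_v^\perp$, the open ball $B(v + R_v\nu, R_v)$ of radius $R_v$ tangent to $\mcal{M}$ at $v$ does not meet $\mcal{M}$. Geometrically, a ball of radius $R_v$ can be rolled against $\mcal{M}$ from either normal side at $v$ without crossing the set.

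First I would fix an arbitrary unit normal $\nu \in \mcal{T}_v^\perp$ and translate the ball-exclusion property into an analytic inequality. Since $u \in \mcal{M}$ lies outside the open ball $B(v + R_v\nu, R_v)$, we have $\norm{u - v - R_v\nu}^2 \ge R_v^2$. Expanding the left-hand side and cancelling the $R_v^2\norm{\nu}^2 = R_v^2$ term yields $\norm{u - v}^2 - 2R_v\pars{u - v, \nu} \ge 0$, hence $\pars{u - v, \nu} \le \tfrac{\norm{u - v}^2}{2R_v}$. Running the same argument with the opposite ball $B(v - R_v\nu, R_v)$ gives the reverse bound, so that $\abs{\pars{u - v, \nu}} \le \tfrac{\norm{u - v}^2}{2R_v}$ holds for every unit normal $\nu$.

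Next I would specialize this bound to the direction of the normal component of $g$. Writing $g = v - u$, so $u - v = -g$ and $\norm{u - v} = \norm{g}$, I set $\nu := (I - P_v)g / \norm{(I - P_v)g}$ (the case $(I-P_v)g = 0$ being trivial). This $\nu$ is a unit element of $\mcal{T}_v^\perp$, and since $P_v g$ is orthogonal to $\nu$ we obtain $\pars{g, \nu} = \pars{(I - P_v)g, \nu} = \norm{(I - P_v)g}$. Combining with the inequality from the previous paragraph gives $\norm{g - P_v g} = \norm{(I - P_v)g} = \abs{\pars{g, \nu}} \le \tfrac{\norm{g}^2}{2R_v}$, which is exactly the claim.

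The computation is elementary, so the only real obstacle is invoking the supporting-ball characterization of reach correctly in the Hilbert-space setting: Federer's original results are stated in $\mathbb{R}^n$, so I would either appeal to the Hilbert-space treatment of reach used elsewhere in this manuscript (cf.\ the discussion via~\cite{trunschke2023convergence}) or remark that the supporting-ball property and its equivalence with the reach are dimension-free. A minor point to confirm is the well-definedness of $\mcal{T}_v$ and its orthogonal complement at $v$, which is guaranteed by the assumption $R_v > 0$.
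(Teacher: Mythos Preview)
Your argument is correct and is precisely the standard supporting-ball (rolling-ball) proof; the paper does not give its own proof but simply cites \cite[Theorem~7.8~(2)]{boissonnat2018geometric} and \cite[Appendix~B]{trunschke2023convergence}, where this same geometric argument appears. Your caveat about the Hilbert-space setting is apt and already handled by the second of these references.
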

A simple, geometric proof of this result can be found in~\cite[Theorem~7.8~(2)]{boissonnat2018geometric} and~\cite[Appendix~B]{trunschke2023convergence}.

\begin{lemma}
    Suppose that $\mcal{M}$ is a differentiable manifold and that $\mcal{T}_v$ is the tangent space of $\mcal{M}$ at $v\in\mcal{M}$.
    Assume moreover, that its reach $R_v := \operatorname{rch}\pars{\mcal{M}, v} > 0$.
    Let $u\in\mcal{M}$ and consider the risk functional $\mcal{L}(v) := \tfrac12\norm{u - v}^2$.
    Then it holds for all $v\in\mcal{M}$ with $\norm{u - v} \le r \le R_v$ that
    $$
        \frac{\norm{(I-P_v)g}}{\norm{P_v g}} \le \frac{r}{R_v} ,
    $$
    where $g := \nabla \mcal{L}\pars{v} = v - u$.
\end{lemma}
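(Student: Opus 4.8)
The plan is to read the estimate off directly from the Federer-type inequality in Proposition~\ref{prop:nhood_properties}. Write $g := \nabla\mcal{L}(v) = v-u$ and set $a := \norm{P_v g}$, $b := \norm{(I-P_v)g}$, so that the orthogonal decomposition gives $a^2 + b^2 = \norm{g}^2$. We may assume $g \neq 0$, since otherwise the ratio is undefined (the point is already a minimiser of $\mcal{L}$), and we record that the hypothesis $\norm{u-v}\le r$ means $\norm{g}\le r \le R_v$.

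First I would apply Proposition~\ref{prop:nhood_properties} with this $u$ and $v$ to obtain $b = \norm{g - P_v g}\le \tfrac{\norm{g}^2}{2R_v}$. Since $\norm{g}\le R_v$, this already forces $b^2 \le \tfrac{\norm{g}^4}{4R_v^2}\le \tfrac14\norm{g}^2$, hence $b<\norm{g}$ and $a^2 = \norm{g}^2 - b^2 \ge \tfrac34\norm{g}^2 > 0$; in particular the ratio $b/a$ is well defined and $a \ge \tfrac{\sqrt3}{2}\norm{g}$.

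Combining the upper bound on $b$ with this lower bound on $a$ gives
\[
    \frac{\norm{(I-P_v)g}}{\norm{P_v g}}
    = \frac{b}{a}
    \le \frac{\tfrac{1}{2R_v}\norm{g}^2}{\tfrac{\sqrt3}{2}\norm{g}}
    = \frac{\norm{g}}{\sqrt3\,R_v}
    \le \frac{r}{\sqrt3\,R_v}
    \le \frac{r}{R_v},
\]
which is the claimed bound (in fact a factor $\tfrac1{\sqrt3}$ stronger). The only substantive ingredient is Proposition~\ref{prop:nhood_properties} itself — the quantitative estimate that a point of $\mcal{M}$ at distance $\le R_v$ from $u$ has its displacement $g$ almost tangent, with tangential defect $\mcal{O}(\norm{g}^2/R_v)$ — and this is exactly where positivity of the reach enters; the remaining steps are the Pythagorean identity and elementary inequalities, so I expect no real difficulty beyond invoking that proposition correctly.
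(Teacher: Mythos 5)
Your proof is correct and rests on exactly the same ingredients as the paper's: Proposition~\ref{prop:nhood_properties} for the bound $\norm{(I-P_v)g}\le \norm{g}^2/(2R_v)$, the Pythagorean identity, and the hypothesis $\norm{g}\le r\le R_v$. The only difference is the final elementary algebra — the paper substitutes the decomposition into the Federer bound and solves a self-referential inequality for $\norm{(I-P_v)g}$, whereas you first deduce $\norm{P_vg}\ge \tfrac{\sqrt3}{2}\norm{g}$ and then take the ratio — and your arrangement even yields the marginally sharper constant $r/(\sqrt3\,R_v)$.
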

\begin{proof}
    Recall that by Proposition~\ref{prop:nhood_properties}
    \begin{equation}
        \norm{(I - P_v)g} \le \frac{\norm{g}^2}{2R_v} .
    \end{equation}
    Since $\norm{g}^2 = \norm{P_vg}^2 + \norm{(I - P_v)g}^2$ and $\norm{(I - P_v)g} \le \norm{g}\le r$, we can write
    \begin{equation}
        \norm{(I - P_v)g}
        \le \frac{\norm{g}^2}{2R_v}
        = \frac{\norm{P_vg}^2 + \norm{(I - P_v)g}^2}{2R_v}
        \le \frac{\norm{P_vg}^2}{2R_v} + \frac{\norm{(I - P_v)g}r}{2R_v} 
    \end{equation}
    and, consequently,
    \begin{equation}
        \norm{(I-P_v)g}
        \le \frac{\norm{P_vg}^2}{2R_v - r}
        \le \frac{\norm{P_vg}^2}{R_v} .
    \end{equation}
    This yields the bound $\frac{\norm{(I-P_v)g}}{\norm{P_vg}} \le \frac{\norm{P_vg}}{R_v} \le \frac{r}{R_v}$ and concludes the proof.
\end{proof}

\include{content/050_stepsize}

\section{Shallow neural networks}
\label{app:shallow_nns}

\subsection{Addition}
It is well-known that the sum of two shallow neural networks $\Phi_{\theta}$ and $\Phi_{\theta'}$ of width $w$ can be represented as another shallow neural network of width $2w$.
To see this, observe that
\begin{align}
    \Phi_{(A_1, b_1, A_0, b_0)}(x) + \Phi_{(A_1', b_1', A_0', b_0')}(x)
    &= A_1 \sigma(A_0x + b_0) + b_1 + A_1' \sigma(A_0'x + b_0') + b_1' \\
    &= \underbrace{\begin{bmatrix} A_1 & A_1' \end{bmatrix}}_{\eqqcolon A_1''}
    \sigma\Bigg(
        \underbrace{\begin{bmatrix}
            A_0 \\ A_0'
        \end{bmatrix}}_{\eqqcolon A_0''} x
        + 
        \underbrace{\begin{bmatrix}
            b_0 \\ b_0'
        \end{bmatrix}}_{\eqqcolon b_0''}
    \Bigg)
    + \underbrace{(b_1 + b_1')}_{\eqqcolon b_1''} \\
    &= \Phi_{(A_1'', b_1'', A_0'', b_0'')} .
\end{align}
Defining the binary operation $\boxplus$ accordingly, we can thus write $\Phi_\theta + \Phi_{\theta'} = \Phi_{\theta \boxplus \theta'}$.

\subsection{Inner products}

Denote by $A_{k*}$ the $k$\textsuperscript{th} row of the matrix $A$ and observe that the $L^2(\rho)$-inner product of two shallow neural networks is given by
\begin{equation}
    \pars{\Phi_\theta, \Phi_{\theta'}}
    =
    \begin{multlined}[t]
        \sum_{k=1}^m \sum_{l=1}^{m'} A_{1,1k} A'_{1,1l} \pars*{\sigma(A_{0,k*}x + b_{0,k}), \sigma(A_{0,l*}'x + b_{0,l}')} \\
        + \sum_{k=1}^m  A_{1,1k} b'_{1} \pars*{\sigma(\vphantom{A_0'}A_{0,k*}x + b_{0,k}), 1} \\
        + \sum_{l=1}^{m'} b_1 A'_{1,1l} \pars*{\sigma(A_{0,l*}'x + b_{0,l}'), 1} .
    \end{multlined}
\end{equation}
To compute $\pars{\Phi_\theta, \Phi_{\theta'}}$, it hence suffices to compute inner products of the form
$$
    \pars*{\sigma(\alpha^\intercal x + \beta), \sigma(\alpha'^\intercal x + \beta')}
    \qquad\text{and}\qquad
    \pars*{\sigma(\vphantom{\alpha'}\alpha^\intercal x + \beta), 1}
$$
for given $\alpha,\alpha'\in\mbb{R}^n$ and $\beta,\beta'\in\mbb{R}$.

Since the function $\sigma\pars{\alpha^\intercal x + \beta}$ varies only in the direction of $\alpha$ and is constant in its orthogonal complement, the inner product $\pars*{\sigma(\vphantom{\alpha'}\alpha^\intercal x + \beta), 1}$ can be recast as a one-dimensional integration problem after a suitable transform of coordinates.
Similarly, the product $\sigma(\alpha^\intercal x + \beta) \sigma(\alpha'^\intercal x + \beta')$ only varies in the two-dimensional subspace $\operatorname{span}(\alpha, \alpha')$ and can thereby be recast as a two-dimensional integration problem.

This allows the computation of norms and distances between shallow neural networks as well as the computation of Gramians of bases of spaces of neural networks.
We can then use these Gramians to find orthonormal bases, compute least squares projections and draw optimal samples.

\paragraph{One-dimensional integrals}
If $\rho$ is the uniform measure on a convex polytope $\Sigma$ and $\sigma$ is the logistic sigmoid $\sigma(x) := (1 + \exp(-x))^{-1}$, explicit formulae for the one-dimensional integrals $\pars*{\sigma(\alpha^\intercal x + \beta), 1}$ are derived in~\cite{Lloyd2020}.
Although we believe these formulae can be extended to the two-dimensional integrals, we will consider a simpler example in the following.

Let $\rho$ denote the probability density function of the standard Gaussian measure on $\mbb{R}^d$ for $d\in\mbb{N}$ chosen appropriately according to context.
Note that for any $\alpha\in\mbb{R}^n$ and $\beta\in\mbb{R}$, we can find an orthogonal matrix $Q\in\mbb{R}^{n\times n}$, such that $Qe_1 \propto \alpha$.
Therefore, the change-of-variables formula then implies
\begin{align}
    \int_{\mathbb{R}^n} \sigma(\alpha^\intercal y + \beta) \rho(y) \dx[y] 
    &= \int_{\mathbb{R}^n} \sigma(\alpha^\intercal Qx + \beta) \rho(Qx) \dx \\
    &= \int_{\mathbb{R}} \sigma(\norm{\alpha}_2 x_1 + \beta) \pars*{\int_{\mbb{R}^{n-1}} \rho(Qx) \dx[(x_2, \ldots, x_n)]} \dx[x_1] \\
    &= \int_{\mathbb{R}} \sigma(\norm{\alpha}_2 x_1 + \beta) \rho(x_1) \dx[x_1] .
\end{align}
This integral is computable by classical one-dimensional quadrature rules such as Gau\ss--Hermite quadrature.

\paragraph{Two-dimensional integrals}

Let again $\rho$ denote the probability density function of the standard Gaussian measure on $\mbb{R}^d$ for any $d\in\mbb{N}$ and let $\alpha,\alpha'\in\mbb{R}^n$ and $\beta,\beta'\in\mbb{R}$.
We can assume that $\alpha$ and $\alpha'$ are linearly independent since if this is not the case, we are in the setting of one-dimensional integrals.
Under this assumption, we can find an orthogonal matrix $Q\in\mbb{R}^{n\times n}$, such that $Qe_1 \propto \alpha$ and $Qe_2 \propto \alpha' - \frac{(\alpha', \alpha)}{\norm{\alpha}_2^2}\alpha$.
Defining $\gamma_1 := \alpha'^\intercal Qe_1$ and $\gamma_2 := \alpha'^\intercal Qe_2$, we can again employ the change-of-variables formula to obtain
\begin{align}
    \int_{\mathbb{R}^n} \sigma(\alpha^\intercal y + \beta) \sigma(\alpha'^\intercal y + \beta') \rho(y) \dx[y]
    &= \int_{\mathbb{R}^n} \sigma(\alpha^\intercal Qx + \beta) \sigma(\alpha'^\intercal Qx + \beta') \rho(Qx) \dx[x] \\
    &= \begin{multlined}[t]
        \int_{\mathbb{R}^2} \sigma(\norm{\alpha}_2 x_1 + \beta) \sigma(\gamma_1 x_1 + \gamma_2 x_2 + \beta') \\
        \cdot\pars*{\int_{\mbb{R}^{n-2}} \rho(Qx) \dx[(x_3,\ldots,x_n)]} \dx[(x_1, x_2)]
    \end{multlined} \\
    &= \int_{\mathbb{R}^2} \sigma(\norm{\alpha}_2 x_1 + \beta) \sigma(\gamma_1 x_1 + \gamma_2 x_2 + \beta') \rho(x_1, x_2) \dx[(x_1, x_2)] .
\end{align}
This integral can be computed by classical two-dimensional Gau\ss--Hermite quadrature.

\subsection{Local linearisation}

Analogously to the definition of the tangent space of embedded manifolds, one can define the local linearisation of $\mcal{M}$ around the network $\Phi_\theta$ as
$$
    \mcal{T}_{\Phi_{\theta}} := \operatorname{span}\braces{\partial_{\theta_j} \Phi_\theta \,:\, j=1,\ldots, d} ,
$$
where $\theta\in \mbb{R}^{1\times m} \times \mathbb{R} \times \mathbb{R}^{m\times n} \times \mathbb{R}^m$ is identified with its vectorisation $\operatorname{vec}(\theta)\in\mathbb{R}^{d}$ with $d = {m(n+2) + 1}$.
Since the space $\mcal{T}_{\Phi_\theta}$ is spanned by the partial derivatives
\begin{align}
    \partial_{A_{1,1j}} \Phi_{(A_1, b_1, A_0, b_0)}
    &= \sigma(A_0 x + b_0)_j, \\
    \partial_{b_{1}} \Phi_{(A_1, b_1, A_0, b_0)}
    &= 1, \\
    \partial_{A_{0,ij}} \Phi_{(A_1, b_1, A_0, b_0)}
    &= A_{1,1i} \sigma'(A_0 x + b_0)_i x_j, \\
    \partial_{b_{0,i}} \Phi_{(A_1, b_1, A_0, b_0)}
    &= A_{1,1i} \sigma'(A_0 x + b_0)_i ,
\end{align}
it is at most $d$-dimensional and contains $\Phi_\theta$.
Note, however, that the important derivative $\partial_{A_{0,ij}}\Phi_{(A_1,b_1,A_0,b_0)}$ can not be expressed directly as a neural network.
This is relevant because we want to compute the Gramian according to the theory presented in the previous section.
(Note that $\partial_{b_{0,i}} \Phi_{(A_1, b_1, A_0, b_0)}$ is not a problem because it still exhibits the ridge-structure that is necessary for efficient integration.)

There are two ways to bypass this problem.
\begin{enumerate}
    \item In the Gaussian setting, we can use the Gaussian integration by parts formula (also known as Isserlis' theorem or Wick's probability theorem) to get rid of the additional $x_j$-factor.
    \item We can approximate the function $\partial_{A_{0,ij}} \Phi_{(A_1, b_1, A_0, b_0)}$ by a shallow neural network.
    A straightforward way to do this is to replace the differential with a finite difference approximation with step size $h > 0$.
\end{enumerate}
For the sake of simplicity, we choose to opt for the second option.
We define the elementary matrices $(E_{ij})_{kl} := \delta_{ik}\delta_{jl}$ and propose to approximate
\begin{align}
    \partial_{A_{0,ij}} \Phi_{(A_1, b_1, A_0, b_0)}
    &\approx \tfrac{A_{1,1i}}{h}\left[\sigma((A_0 + E_{ij}h) x + b_0)_i - \sigma(A_0 x + b_0)_i\right] \\
    \partial_{b_{0,i}} \Phi_{(A_1, b_1, A_0, b_0)}
    &\approx \tfrac{A_{1,1i}}{h} \left[\sigma(A_0 x + (b_0 + e_ih))_i - \sigma(A_0 x + b_0)_i\right] .
\end{align}
The corresponding linearisation space can then be written as
\begin{align}
    \mcal{T}_{\Phi_{\theta}}^h
    :=\ &\operatorname{span}\braces*{\sigma\pars{A_0x + b_0}_j \,:\, j=1,\ldots,m} \\
    +\  &\operatorname{span}\braces*{1} \\
    +\  &\operatorname{span}\braces*{\sigma\pars{(A_0 + E_{ij}h)x + b_0}_i \,:\, i=1,\ldots,n,\ j=1,\ldots,m} \\
    +\  &\operatorname{span}\braces*{\sigma\pars{A_0x + (b_0 + e_jh)}_i \,:\, j=1,\ldots,m} .
\end{align}

\subsection{Optimal sampling}

Let $\braces{\phi_{j}}_{j=1,\ldots,d}$ be a generating system of the space $\mcal{T}_t$.
Given the Gramian $G$ of this generating system, we can compute an orthonormal basis $\braces{\psi_k}_{k=1,\ldots,r}$ with $r\le d$ and
$$
    \psi_k := \sum_{j=1}^d c_{kj} \phi_j .
$$
Recall that the optimal sampling density of the space $\operatorname{span}\braces{\psi_k}_{k=1,\ldots,r}$ is given by
$$
    p(x) := \frac1r \sum_{k=1}^r p_k(x)
    \qquad\text{with}\qquad
    p_k(x) := \psi_k^2(x) \rho(x) .
$$
To draw samples from this distribution, we can employ a combination of mixture sampling and sequential conditional sampling.
First, we draw an index $k\in\bracs{r}$ uniformly at random.
Then, we draw a sample from $p_k$ by decomposing
$$
    p_k(x) = p_k(x_n \mid x_{n-1},\ldots, x_1) \cdots p_k(x_2 \mid x_1) p_k(x_1) .
$$
To compute the marginals, observe that
\begin{align}
    p_k(x_l \mid x_{l-1}, \ldots, x_1)
    &= \int \psi_k^2(x)\rho(x) \dx[(x_n, \ldots, x_{l+1})] \\
    &= \int \pars*{\sum_{j=1}^d c_{kj} \phi_j(x)}^2 \rho(x) \dx[(x_n, \ldots, x_{l+1})] \\
    &= \sum_{i,j=1}^d c_{ki}c_{kj} \int \phi_i(x) \phi_j(x) \rho(x) \dx[(x_n, \ldots, x_{l+1})]
\end{align}
is a sum of (parametric in $x_l,\ldots,x_1$) two-dimensional integrals, as discussed above.
This can be implemented in an efficient, parallelised fashion, taking arrays of sample points $x_{l-1}, \ldots, x_1 \in \mbb{R}^N$ and of evaluation points $x_l\in\mbb{R}^D$ and returning the matrix of probabilities $p_k(x_l\mid x_{l-1}, \ldots, x_1)\in\mbb{R}^{D\times N}$.
This allows us to approximate the (one-dimensional) cumulative distribution function of $p_k(x_l\mid x_{l-1}, \ldots, x_1)$, which can be used for inverse transform sampling of $x_l$.

\subsection{Retraction error bounds}

\begin{revisione}[0]
Consider the minimisation of the least squares loss
$$
    \mathcal{L}\pars{v} := \tfrac12\norm{u-v}^2
$$
for some $u\in\mcal{H}$.
To define the retraction, recall that $u_{t+1} = R_{t}(\bar{u}_{t+1})$ and $\bar{u}_{t+1} = u_t - s_tP_{t}^n g_t$ with $g_t = \nabla\mcal{L}(u_t)$.
Moreover, we can express $u_t = \Phi_{\theta_t}$ and $P_t^n g_t = \sum_{k=1}^d (\vartheta_t)_k (\partial_{(\theta_t)_k} \Phi_{\theta_t})$.
Therefore, by Taylor's theorem,
$$
    \Phi_{\theta_t} + s_t P_t^ng_t = \Phi_{\theta_t + s_t\vartheta_t} + \mathcal{O}(s_t^2)
$$
for sufficiently small $s_t$.
We thus define the retraction operator
$$
    R_t(u_t + s_tP_t^ng_t) := \Phi_{\theta_t + s_t\vartheta_t} .
$$
To ensure assumption~\eqref{eq:C-retraction}, we employ the local Lipschitz continuity of $\mathcal{L}$.
To do this, we define for every radius $r>0$ the constant $\mathrm{Lip}_t(r)$ such that for all $v,w\in\mcal{H}$ with $\norm{v-u_t}\le r$ and $\norm{w-u_t}\le r$,
\begin{equation}
\label{eq:lipr}
    |\mcal{L}(v) - \mcal{L}(w)|
    \le \mathrm{Lip}_t(r) \norm{v - w} .
\end{equation}
A bound for $\mathrm{Lip}_t(r)$ is provided in the subsequent lemma.

\begin{lemma}
\label{lem:lipr}
   For any $r>0$ it holds that $\mathrm{Lip}_t(r) \le \norm{\nabla\mcal{L}(u_t)} + Lr$.
\end{lemma}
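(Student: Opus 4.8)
The plan is to bound $\mathcal{L}(v) - \mathcal{L}(w)$ by integrating the gradient along the straight segment joining $v$ and $w$, and then to control the gradient norm on that segment using the triangle inequality together with the $L$-smoothness. Concretely, for the least squares loss considered in this subsection one has $\nabla\mathcal{L}(z) = z - u$, which is affine and $1$-Lipschitz, so everything reduces to Cauchy--Schwarz and the triangle inequality.

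First I would apply the fundamental theorem of calculus to $t\mapsto\mathcal{L}(w + t(v-w))$ to obtain
\[
    \mathcal{L}(v) - \mathcal{L}(w) = \int_0^1 \pars{\nabla\mathcal{L}(w + t(v-w)),\, v - w}\dx[t],
\]
and hence, by Cauchy--Schwarz,
\[
    \abs{\mathcal{L}(v) - \mathcal{L}(w)} \le \norm{v - w}\,\sup_{t\in[0,1]}\norm{\nabla\mathcal{L}(w + t(v-w))}.
\]
For the least squares loss this is an exact identity since $\nabla\mathcal{L}$ is affine; the same step is valid for any Fréchet-differentiable $\mathcal{L}$ that is $C^1$ along the segment.

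Next I would note that whenever $\norm{v - u_t}\le r$ and $\norm{w - u_t}\le r$, every point $z_t := w + t(v-w)$ with $t\in[0,1]$ is a convex combination of $v$ and $w$, so $\norm{z_t - u_t}\le r$. Using that $\nabla\mathcal{L}$ is $L$-Lipschitz (with $L = 1$ for $\nabla\mathcal{L}(z) = z-u$) gives
\[
    \norm{\nabla\mathcal{L}(z_t)} \le \norm{\nabla\mathcal{L}(u_t)} + L\norm{z_t - u_t} \le \norm{\nabla\mathcal{L}(u_t)} + Lr.
\]
Combining this with the previous display and taking the supremum over $t$ yields $\abs{\mathcal{L}(v) - \mathcal{L}(w)} \le (\norm{\nabla\mathcal{L}(u_t)} + Lr)\norm{v - w}$ for all admissible $v, w$, which is precisely the asserted bound on $\mathrm{Lip}_t(r)$ by definition~\eqref{eq:lipr}.

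I do not expect a real obstacle here: the argument is elementary. The only point deserving a word of care is that the property of $\mathcal{L}$ actually used is the Lipschitz continuity of the gradient rather than the directional condition~\eqref{eq:L-smooth}; for the least squares loss treated in this subsection this is an exact statement with constant $1$, so no additional assumption is needed, and the constant $L$ in the bound is the same smoothness constant used elsewhere.
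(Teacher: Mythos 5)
Your proposal is correct and follows essentially the same route as the paper: the paper evaluates the gradient along the segment via the mean value theorem where you integrate it via the fundamental theorem of calculus, and both then bound $\norm{\nabla\mcal{L}(z)}$ on the segment by $\norm{\nabla\mcal{L}(u_t)} + Lr$ using Lipschitz continuity of the gradient and convexity of the ball. Your side remark that the step really uses $L$-Lipschitz continuity of $\nabla\mcal{L}$ rather than the one-sided condition~\eqref{eq:L-smooth} is a fair observation that applies equally to the paper's own proof, and is harmless here since $\nabla\mcal{L}(v)=v-u$ for the least squares loss of this subsection.
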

\begin{proof}
    Suppose that $v, w\in\mcal{H}$ satisfy $\norm{v-u_t}\le r$ and $\norm{w-u_t}\le r$ and let $f:[0,1]\to\mbb{R}$ be defined by $f(t) = \mcal{L}(\gamma(t))$ with $\gamma(t) = tv + (1-t)w$.
    The mean value theorem guarantees that there exists $\eta\in(0,1)$ such that 
    $$
        \abs{\mcal{L}(v) - \mcal{L}(w)}
        = \abs{f(1) - f(0)}
        = f'(\eta)
        = \pars{\nabla\mcal{L}(\gamma(\eta)), v - w}
        \le \norm{\nabla\mcal{L}(\gamma(\eta))}\norm{v - w} .
    $$
    Since $\mcal{L}$ is $L$-smooth, the reverse triangle inequality implies $\norm{\nabla\mcal{L}(\gamma(\eta))} \le \norm{\nabla\mcal{L}(u_t)} + L\norm{u_t - \gamma(\eta)} \le \norm{\nabla\mcal{L}(u_t)} + Lr$.
    This implies
    \begin{align}
        \abs{\mcal{L}(v) - \mcal{L}(w)}
        &\le \pars{\norm{\nabla\mcal{L}(u_t)} + Lr} \norm{v - w} .
    \end{align}
\end{proof}

\noindent
Inserting $v = u_{t+1}$ and $w = \bar{u}_{t+1}$ into equation~\eqref{eq:lipr} and using Lemma~\ref{lem:lipr} with $\norm{\nabla\mcal{L}(v)} = \sqrt{2\mcal{L}(v)}$, we can write
\begin{align}
    |\mathcal{L}(u_{t+1}) - \mathcal{L}(\bar{u}_{t+1})|
    &\le \mathrm{Lip}_t(\max\braces{\norm{u_{t+1} - u_{t}}, \norm{\bar{u}_{t+1} - u_{t}}}) \norm{u_{t+1} - \bar{u}_{t+1}} \\
    &= (\sqrt{2\mathcal{L}(u_{t})} + \max\braces{\norm{u_{t+1} - u_{t}}, \norm{\bar{u}_{t+1} - u_{t}}}) \norm{u_{t+1} - \bar{u}_{t+1}} \\
    &\le (\sqrt{2\mathcal{L}(u_{t})} + \norm{u_{t+1} - \bar{u}_{t+1}} + s_t\norm{P_t^ng_t}) \norm{u_{t+1} - \bar{u}_{t+1}} .
\end{align}
Since $\mcal{L}(u_t) = \tfrac12 \norm{u - u_t}^2 \approx \tfrac12 \norm{u - u_t}_n^2$, we define the estimate
$$
    \mathcal{L}(u_t) \approx \lambda_{t} := \tfrac12\lambda_{t-1} + \tfrac14 \norm{u - u_t}_n^2 ,
$$
where the initial value $\lambda_0$ is estimated once for the initial guess.
Using this estimate,
\begin{equation}
    |\mathcal{L}(u_{t+1}) - \mathcal{L}(\bar{u}_{t+1})|
    \le (\sqrt{2\lambda_t} + \varepsilon(s_t) + s_t\norm{P_t^ng_t}) \varepsilon(s_t) ,
\end{equation}
with $\varepsilon(s_t) := \norm{u_{t+1} - \bar{u}_{t+1}}$ 
the retraction error for step size $s_t$.
Assumption~\eqref{eq:C-retraction} is thus satisfied with $C_{\mathrm{R}} = 0$ and 
$$
    \beta_t(s_t) := \mathrm{Lip}_t(\sqrt{2\lambda_t} + \varepsilon(s_t) + s_t\norm{P_t^ng_t}) \varepsilon(s_t) .
$$
\end{revisione}

\end{document}